\definecolor{aleacolor}{rgb}{0.16,0.59,0.78}
\renewcommand{\cite}{\citet}
\theoremstyle{plain}
\newtheorem{theorem}{Theorem}[section]
\newtheorem{lemma}[theorem]{Lemma}
\theoremstyle{definition}
\newtheorem{definition}[theorem]{Definition}
\theoremstyle{remark}
\makeatletter \@addtoreset{equation}{section} \makeatother
\newcommand{\N}{\mathbb{N}}
\newcommand{\Z}{\mathbb{Z}}
\newcommand{\Q}{\mathbb{Q}}
\newcommand{\R}{\mathbb{R}}
\newcommand{\PP}{\mathbb{P}}
\newcommand{\F}{\mathcal{F}}
\newcommand{\nvert}[0]{~\vert~}
\newcommand{\prob}[2]{\mathbb{P}_{#1}\hspace{-0.3mm}\left( #2 \right)}
\newcommand{\esp}[2]{\mathbb{E}_{#1}\hspace{-0.3mm}\left[ #2 \right]}
\newcommand{\var}[2]{\mathbb{V}_{#1}\hspace{-0.3mm}\left( #2 \right)}
\newcommand{\probw}[2]{\mathscr{P}_{#1}\hspace{-0.3mm}\left( #2 \right)}
\newcommand{\espw}[2]{\mathscr{E}_{#1}\hspace{-0.3mm}\left[ #2 \right]}
\newcommand{\overbar}[1]{\mkern 1.5mu\overline{\mkern-1.5mu#1\mkern-1.5mu}\mkern 1.5mu}
\begin{document}

\title[Scale-inhomogeneous Gaussian free field]{Extremes of the two-dimensional Gaussian free field with scale-dependent variance\\ {\normalsize See \href{http://alea.impa.br/english/index_v13.htm}{ALEA} for the official version}}

\author[L.-P. Arguin]{Louis-Pierre Arguin$^1$}
\address{Baruch College and Graduate Center (CUNY), Department of Mathematics \newline New York, NY 10010, USA.}
\email{louis-pierre.arguin@baruch.cuny.edu}
\urladdr{\url{https://arguin.commons.gc.cuny.edu}}
\thanks{$^1$L.-P. Arguin is supported by the NSF grant DMS 1513441, the PSC-CUNY Research Award~68784-00~46, a Eugene M. Lang Junior Faculty Research Fellowship, and partially by a NSERC Discovery grant and a FQRNT {\it Nouveaux chercheurs} grant.}

\author[F. Ouimet]{Fr\'ed\'eric Ouimet$^2$}
\address{Universit\'e de Montr\'eal, D\'epartement de Math\'ematiques et de Statistique \newline Montr\'eal, QC H3T 1J4, Canada.}
\email{ouimetfr@dms.umontreal.ca}
\urladdr{\url{https://sites.google.com/site/fouimet26}}
\thanks{$^2$F. Ouimet is supported by a NSERC Doctoral Program Alexander Graham Bell scholarship and partially by a NSERC Master Program Alexander Graham Bell scholarship.}

\subjclass[2010]{60G70, 82B44}
\keywords{extreme value theory, Gaussian free field, branching random walk}

    \begin{abstract}
        In this paper, we study a random field constructed from the two-dimensional Gaussian free field (GFF) by modifying the variance along the scales in the neighborhood of each point.
        The construction can be seen as a local martingale transform and is akin to the time-inhomogeneous branching random walk.
        In the case where the variance takes finitely many values, we compute the first order of the maximum and the log-number of high points.
        These quantities were obtained by \citet{MR1880237} and \cite{MR2243875} when the variance is constant on all scales.
        The proof relies on a truncated second moment method proposed by \cite{MR3380419}, which streamlines the proof of the previous results.
        We also discuss possible extensions of the construction to the continuous GFF.
    \end{abstract}

    \maketitle

\section{Introduction}
    \subsection{The model}\label{sec:model}

    Let $(W_k)_{k\geq 0}$ be a simple random walk starting at $u\in \Z^2$ with law $\mathscr{P}_u$. For every finite box $B \subseteq \Z^2$, the Gaussian free field (GFF) on $B$ is a centered Gaussian field $\phi \circeq \{\phi_v\}_{v\in B}$ with covariance matrix
    \begin{equation}\label{eqn: G}
        G_B(u,v) \circeq \frac{\pi}{2} \cdot \espw{u}{\sum_{k=0}^{\tau_{\partial B} - 1} 1_{\{W_k = v\}}}, \ \ \ u,v\in B,
    \end{equation}
    where $\tau_{\partial B}$ is the first hitting time of $(W_k)_{k\geq 0}$ on the \textit{boundary of $B$},
    \begin{equation*}
        \partial B \circeq \{v\in B \nvert \exists z\not\in B ~~\text{such that } \|v - z\|_2 = 1\},
    \end{equation*}
    and $\|\cdot\|_2$ denotes the Euclidean distance in $\Z^2$.
    With this definition, $B$ contains its boundary.
    We let $B^o \circeq B \backslash \partial B$.
    By convention, summations are zero when there are no indices, so $\phi$ is identically zero on $\partial B$. This is the {\it Dirichlet boundary condition}.
    The constant $\pi/2$ in \eqref{eqn: G} is a convenient normalization for the variance.

    In this paper, we consider a family of Gaussian fields constructed from the GFF $\{\phi_v\}_{v\in V_N}$ on the square box $V_N \circeq \{0,1,...,N\}^2$.
    These Gaussian fields are the analogues, in the context of the GFF, of the time-inhomogeneous branching random walks studied in \citet{MR2070335,MR2968674,MR3164771,arXiv:1509.08172}.
    We study the maxima and the number of high points of this family of Gaussian fields as $N \rightarrow \infty$.

    The construction is very natural for any Gaussian field on a metric space and bears strong similarities with martingale transforms.
    It is based on the modification of the variance in neighborhoods around every point along different mesoscopic scales.
    More precisely, for $\lambda \in (0,1)$ and $v=(v_1,v_2)\in V_N$, consider the closed neighborhood $[v]_\lambda$ in $V_N$ consisting of the square box of width $N^{1-\lambda}$ centered at $v$ that has been cut off by the boundary of $V_N$ :
    \begin{equation*}
        [v]_{\lambda} \circeq \left(\left[v_1 - \frac{1}{2}N^{1-\lambda},v_1 + \frac{1}{2}N^{1-\lambda}\right] \times \left[v_2 - \frac{1}{2}N^{1-\lambda},v_2 + \frac{1}{2}N^{1-\lambda}\right]\right) \bigcap V_N\ .
    \end{equation*}
    By convention, we define $[v]_0 \circeq V_N$ and $[v]_1 \circeq \{v\}$.
    We stress that square boxes are not essential to the construction; any neighborhood centered at $v$ containing points at distance roughly $N^{1-\lambda}$ would do.
    Let $\F_{\partial [v]_{\lambda} \cup [v]_{\lambda}^c} \circeq \sigma(\{\phi_v, v\notin [v]_{\lambda}^o\})$ be the $\sigma$-algebra generated by the variables on the boundary of the box $[v]_{\lambda}$ and those outside of it.
    Since the neighborhoods are shrinking with $\lambda$, for any $v\in V_N$, the collection $\mathbb{F}_v \circeq \{\F_{\partial [v]_{\lambda} \cup [v]_{\lambda}^c}\}_{\lambda\in [0,1]}$ is a filtration.
    In particular, if we let
    \begin{equation*}
        \phi_v(\lambda) \circeq \esp{}{\phi_v \nvert \F_{\partial [v]_{\lambda} \cup [v]_{\lambda}^c}},
    \end{equation*}
    then
    \begin{equation*}
        \text{for every $v\in V_N$, $(\phi_v(\lambda))_{\lambda\in[0,1]}$ is a $\mathbb{F}_v$-martingale.}
    \end{equation*}
    It is also a Gaussian field, therefore disjoint increments of the form $\phi_v(\lambda') - \phi_v(\lambda)$ are independent.
    These observations motivate the definition of {\it scale-inhomogeneous Gaussian free field}, which can be seen as a martingale-transform of $(\phi_v(\lambda))_{\lambda\in[0,1]}$ applied simultaneously for every $v\in V_N$.

    Fix $M\in \N$ and consider the parameters
    \begin{equation*}
        \begin{aligned}
            \quad \quad \quad \quad \boldsymbol{\sigma} &\circeq (\sigma_1,\sigma_2, ..., \sigma_M)\in (0,\infty)^M, ~~~~~~~~~~~~~~~~~~~~~~~~~~~& \text{(variance parameters)} \\
            \boldsymbol{\lambda} &\circeq (\lambda_1,\lambda_2, ..., \lambda_M)\in (0,1]^M, ~\quad \quad \quad \quad \quad \quad \quad \quad & \text{(scale parameters)}
        \end{aligned}
    \end{equation*}
    where $0 \circeq \lambda_0 < \lambda_1 < ... < \lambda_M \circeq 1$.
    The parameters $(\boldsymbol{\sigma}, \boldsymbol{\lambda})$ can be encoded simultaneously in the left-continuous step function
    \begin{equation*}
        \sigma(s) \circeq \sigma_1 1_{\{0\}}(s) + \sum_{i=1}^M \sigma_i 1_{(\lambda_{i-1},\lambda_i]}(s), \ \ \ s\in [0,1]\ .
    \end{equation*}
    We write $\nabla_i$ for the difference operator with respect to the index $i$.
    When the index variable is obvious, we omit the subscript.
    For example,
    \begin{equation*}
        \nabla \phi_v(\lambda_i) \circeq \phi_v(\lambda_i) - \phi_v(\lambda_{i-1})\ .
    \end{equation*}

    \begin{definition}[Scale-inhomogeneous Gaussian free field]\label{def:IGFF}
        \hspace{-0.3mm}Let \hspace{-0.3mm}$\phi \circeq \{\phi_v\}_{v\in V_N}$ \hspace{-0.3mm}be \hspace{-0.3mm}the GFF on $V_N$.
        The $(\boldsymbol{\sigma},\boldsymbol{\lambda})$-GFF on $V_N$ is a Gaussian field $\psi \circeq \{\psi_v\}_{v\in V_N}$ defined by
        \vspace{-2mm}
        \begin{equation}
            \label{eqn: psi v}
            \psi_v \circeq \sum_{i=1}^M \sigma_i \nabla \phi_v(\lambda_i)= \sum_{i=1}^M \sigma_i \big(\phi_v(\lambda_i)-\phi_v(\lambda_{i-1})\big)\ .
        \end{equation}
        Similarly to the GFF, we define
        \begin{equation*}
            \psi_v(\lambda) \circeq \esp{}{\psi_v \nvert \F_{\partial [v]_{\lambda} \cup [v]_{\lambda}^c}}.
        \end{equation*}
    \end{definition}
    The field with two variances ($M=2$) was presented in \citet{MR3354619}, where it was used to prove Poisson-Dirichlet statistics of the Gibbs measure in the homogeneous case ($M=1$).

    \subsection{Main results}

    The main results of this paper are the derivation of the first order of the maximum and the log-number of high points for the scale-inhomogeneous Gaussian free field of Definition \ref{def:IGFF}.
    The methods of proof are general and directly applicable to time-inhomogeneous branching random walks and to other log-correlated Gaussian fields.

    First, we need to introduce some notations.
    For any positive measurable function $f:[0,1]\to\R$, define the integral operators
    \begin{equation*}
        \mathcal{J}_f(s) \circeq \int_0^s f(r) dr \quad \text{and} \quad \mathcal{J}_f(s_1,s_2) \circeq \int_{s_1}^{s_2} f(r) dr\ .
    \end{equation*}
    It turns out that the first order of the maximum and the log-number of high points are controlled by the {\it concavification} of $\mathcal{J}_{\sigma^2}(\cdot)$.
    Let $\hat{\mathcal{J}}_{\sigma^2}$ be the function whose graph is the concave hull of $\mathcal{J}_{\sigma^2}$.
    Its graph is an increasing and concave polygonal line, see Figure \ref{fig:concave} for an example.
    There exists a unique non-increasing left-continuous step function $s \mapsto \bar{\sigma}(s)$ such that
    \begin{equation*}
        \hat{\mathcal{J}}_{\sigma^2}(s) = \mathcal{J}_{\bar{\sigma}^2}(s) = \int_0^s \bar{\sigma}^2(r) dr \  \text{ for all $s\in (0,1]$\hspace{0.3mm}. }
    \end{equation*}
    The points on $[0,1]$ where $\bar{\sigma}$ jumps will be denoted by
    \begin{equation}
        \label{eqn: lambda jumps}
        0 \circeq \lambda^0 < \lambda^1 < ... < \lambda^m \circeq 1,
    \end{equation}
    where $m \leq M$. To be consistent with previous notations, we set $\bar{\sigma}_l \circeq \bar{\sigma}(\lambda^l)$.
    \begin{figure}[ht]
        \centering
        \includegraphics[scale=0.7]{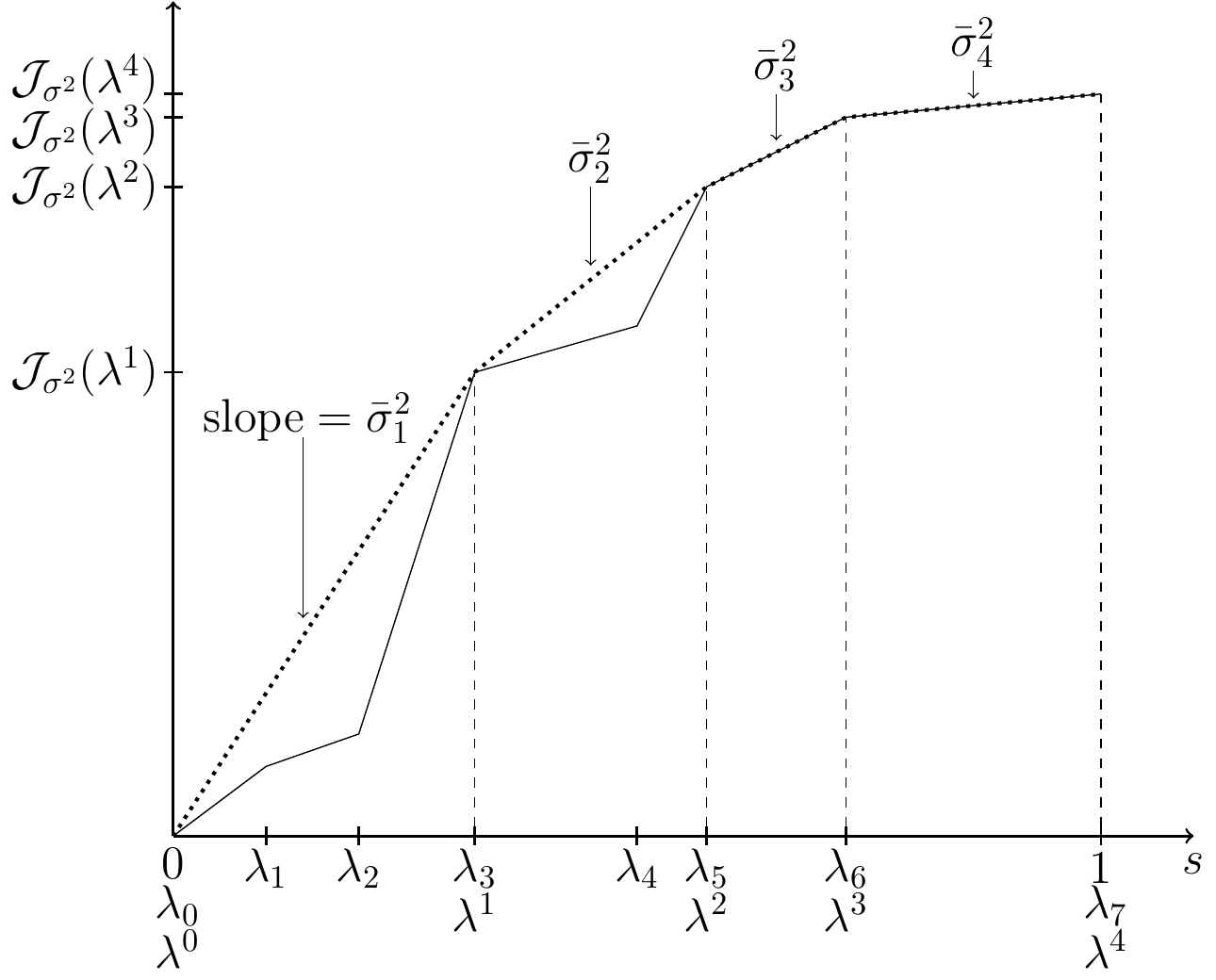}
        \captionsetup{width=0.8\textwidth}
        \caption{Example of $\mathcal{J}_{\sigma^2}$ (closed line) and $\hat{\mathcal{J}}_{\sigma^2}$ (dotted line) with $7$ values for $\sigma^2$.\vspace{-2mm}}
        \label{fig:concave}
    \end{figure}

    \begin{theorem}[First order of the maximum]
        \label{thm:IGFF.order1}
        Let $\{\psi_v\}_{v\in V_N}$ be the $(\boldsymbol{\sigma},\boldsymbol{\lambda})$-GFF on $V_N$ of Definition \ref{def:IGFF}, then
        \begin{equation*}
            \lim_{N\rightarrow\infty} \frac{\max_{v\in V_N} \psi_v}{\log N^2} = \mathcal{J}_{\sigma^2/\bar{\sigma}}(1)\circeq \gamma^\star\quad \text{in probability}\hspace{0.3mm}.
        \end{equation*}
    \end{theorem}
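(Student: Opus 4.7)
The plan is to prove the matching upper and lower bounds separately via a truncated first/second moment argument, using the finite set of concavification break scales $0 = \lambda^0 < \lambda^1 < \cdots < \lambda^m = 1$ as the natural cascade discretisation. Two structural identities are exploited throughout. First, since $\mathcal{J}_{\sigma^2}$ and $\hat{\mathcal{J}}_{\sigma^2}$ agree at every break scale, one has $\mathrm{Var}(\nabla\psi_v(\lambda^l)) = \bar\sigma_l^2 \Delta\lambda^l\, \log N + O(1)$, writing $\Delta\lambda^l \circeq \lambda^l-\lambda^{l-1}$. Second, the cumulative target $\gamma^\star_l \circeq \mathcal{J}_{\sigma^2/\bar\sigma}(\lambda^l)$ satisfies $\Delta\gamma^\star_l = \bar\sigma_l \Delta\lambda^l$, which is precisely the expected Gaussian branching-maximum at scale $l$: the max of $\sim N^{2\Delta\lambda^l}$ near-independent copies of $\mathcal{N}(0, \bar\sigma_l^2 \Delta\lambda^l\log N)$ concentrates at $2\bar\sigma_l \Delta\lambda^l\log N = \Delta\gamma^\star_l \log N^2$. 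The concavified cascade thus predicts the height $\gamma^\star \log N^2$ step by step across $l=1,\ldots,m$.

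For the upper bound, fix $\gamma = \gamma^\star + \epsilon$ and impose barrier levels $\beta_l = \gamma^\star_l + \epsilon l/m$. I would split
\begin{equation*}
\mathbb{P}\!\big(\max_v \psi_v > \gamma \log N^2\big) \leq \mathbb{P}\!\big(\exists\, v,l : \psi_v(\lambda^l) > \beta_l \log N^2\big) + \sum_v \mathbb{P}\!\big(\psi_v > \gamma \log N^2,\ \psi_v(\lambda^l) \leq \beta_l \log N^2\ \forall l\big).
\end{equation*}
The second term admits a clean Laplace analysis: using independence of the increments $\{\nabla\psi_v(\lambda^l)\}_l$, the constrained Gaussian optimiser saturates each barrier and distributes the overshoot $\epsilon$ evenly across the $m$ increments, yielding per-point probability $\leq N^{-2-c\epsilon}$ and hence a sum over $V_N$ that is $\leq N^{-c\epsilon} \to 0$. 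The first term is controlled by induction on $l$: the barrier-violation probability at scale $\lambda^l$ is bounded by the same type of truncated first moment applied at the $\sim N^{2\lambda^l}$ effective scale-$\lambda^l$ subcells with the already-established barriers at earlier scales, giving a polynomial decay that propagates down the cascade.

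For the lower bound, fix $\gamma = \gamma^\star - \epsilon$ and define the ``good'' set
\begin{equation*}
T \circeq \Big\{v \in V_N : \psi_v \geq \gamma \log N^2,\ \psi_v(\lambda^l) \in [\gamma^\star_l \log N^2 - K,\, \gamma^\star_l \log N^2]\ \text{for}\ l = 1,\ldots,m-1\Big\},
\end{equation*}
with $K$ a large constant. The independence of the increments makes $\mathbb{E}[|T|]$ factorise into a positive power of $N$, up to an $(\log N)^{-m/2}$ Gaussian-density prefactor. For the second moment $\mathbb{E}[|T|^2] = \sum_{u,v}\mathbb{P}(u,v\in T)$, partition the pairs by their branching scale $\lambda_{u,v} \approx 1 - \log_N \|u-v\|_2$: below $\lambda_{u,v}$ the two paths are shared (one window per break scale), above $\lambda_{u,v}$ they are independent (two windows per break scale). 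Writing $\lambda^{l^\star}$ for the largest break scale below $\lambda_{u,v}$, a direct count gives the contribution at branching scale $\lambda_{u,v}$ bounded by $(\mathbb{E}[|T|])^2 \cdot N^{-2(\lambda_{u,v} - \lambda^{l^\star})}$. Summing the geometric factor over the discrete branching scales within a single break interval produces $O(1)$, whence $\mathbb{E}[|T|^2] \leq C\,m\,(\mathbb{E}[|T|])^2$. Paley--Zygmund then gives $\mathbb{P}(|T|>0) \geq c > 0$, and a standard second pass over $N^{2\delta}$ disjoint macroscopic sub-boxes, whose GFF restrictions are near-independent by the domain-Markov property, upgrades this to convergence in probability.

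The main obstacle will be the second-moment estimate at branching scales $\lambda_{u,v}$ strictly inside a break interval: the count of pairs and the partial window sharing must cancel precisely to avoid an $N^{\Omega(1)}$ blow-up. Kistler's observation --- that truncating at exactly the $m$ concavification breaks, and nowhere else, produces this cancellation --- is the structural insight that replaces the full ballot/barrier machinery of the original Bramson--Zeitouni derivation and keeps the argument uniform in $\boldsymbol{\sigma}, \boldsymbol{\lambda}$.
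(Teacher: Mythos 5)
Your variance identities at the break scales are correct, and your upper-bound plan is essentially the paper's proof of Lemma \ref{lem:IGFF.order1.upper.bound}: split according to the first break scale at which a slightly raised optimal path is crossed, pass to the $\sim N^{2\lambda^l}$ representatives (this requires the $O(1)$ control of $\mathrm{Var}(\psi_v(\lambda^l)-\psi_{v_{\lambda^l}}(\lambda^l))$, i.e.\ the event $A_\varepsilon$ and Lemma \ref{lem:tech.lemma.3}), and estimate the constrained probability by a tilted Gaussian computation (Lemma \ref{lem: recursion}, which also absorbs the boundary decay of the variance). Those omissions are details, not gaps.

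The genuine gap is in your lower bound: truncating only at the $m$ concavification breaks does not make the second moment close. Already in the homogeneous case $m=1$ your set $T$ carries no intermediate constraint at all, so you are running the plain (untruncated) second moment method on the count of $\gamma$-exceedances; with $\sigma\equiv 1$ the class of pairs with branching scale $\rho$ contributes to $\mathbb{E}[|T|^2]/(\mathbb{E}[|T|])^2$ a factor of order $N^{-2\rho+4\gamma^2\rho/(1+\rho)}$, because the jointly optimal strategy lets the common part overshoot its proportional share; this diverges polynomially for all $\rho<2\gamma^2-1$, i.e.\ as soon as $\gamma>\gamma^{\star}/\sqrt{2}$, so precisely in the regime you need. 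The same mechanism operates inside any break interval when $m\geq 2$: for $\rho(u,v)\in(\lambda^{l-1},\lambda^l)$ the shared increment on $(\lambda^{l-1},\rho)$ is unconstrained, and overshooting it by $t\log N$ costs about $st/v_1$ in the exponent while being repaid about $2(S-s)t/v_2$ by the two independent legs, which now have less to climb to land in your windows at $\lambda^l$; at the proportional slope this is a strict net gain. Hence your claimed per-scale bound $(\mathbb{E}[|T|])^2 N^{-2(\lambda_{u,v}-\lambda^{l^\star})}$ is false (the true contribution grows like a positive power of $N$ for $\lambda_{u,v}$ near $\lambda^{l^\star}$), Paley--Zygmund then gives nothing, and the sub-box boosting step cannot repair it. Note also that you have Kistler's refinement backwards: truncating at the breaks and nowhere else is what the paper does for the \emph{upper} bound, whereas the lower bound (Lemma \ref{lem:IGFF.order1.lower.bound}) truncates on a fine partition $\alpha_k=k/K$ with $K=K(\varepsilon)$ large, requiring the $(1-\varepsilon)$-suboptimal increment at \emph{every} fine scale (this is what forbids the overshoot, at an entropy cost of order $\varepsilon$ per scale), dropping the first $r$ increments and restricting to $V_N^{\delta}$ to decouple nearby points; with that truncation the second moment is $(1+o(1))$ times the first moment squared and Paley--Zygmund directly yields probability $1-O(N^{-c})$, with no need for a second pass over macroscopic sub-boxes.
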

    In the homogeneous case where $M = 1$ and $\sigma_1 = 1$, the result reduces to $\gamma^{\star} = 1$, as proved in \citet{MR1880237}, which corresponds to the first order of the maximum of $N^2$ i.i.d.~Gaussian variables of mean $0$ and variance $\log N$.
    Note that the result of Theorem \ref{thm:IGFF.order1} can be written as follows :
    \vspace{1mm}
    \begin{equation}\label{eq:order1.maximum.continuous.GFF}
        \gamma^\star = \mathcal{J}_{\sigma^2/\bar{\sigma}}(1) = \sum_{l=1}^m \int_{\lambda^{l-1}}^{\lambda^l}\frac{\sigma^2(s)}{\bar{\sigma}(s)}ds = \int_0^1 \bar{\sigma}(s)ds\ .
    \end{equation}
    This is simply a weighted average of homogeneous cases on the intervals $[\lambda^{l-1},\lambda^l]$ with variance parameter $\bar{\sigma}_l$.
    We say that $s \mapsto \bar{\sigma}^2(s)$ act as the \textit{effective variance} of the field.
    We stress that $\gamma^\star$ is strictly smaller than $\bar{\sigma}_1$ in cases where the concave hull is not a straight line.
    In particular, the upper bound on the level of the maximum cannot be proved by a simple union bound as in the homogeneous case.

    The set of $\gamma$-high points of the field $\psi$ is defined as
    \begin{equation*}
        \mathcal{H}_N^{\gamma} \circeq \{v\in V_N \nvert \psi_v \geq \gamma \log N^2\},\ \text{ for all $0 \leq \gamma < \gamma^{\star}$.}
    \end{equation*}
    The number of high points will depend on {\it critical levels} defined by
    \begin{equation}\label{eq:critic.levels}
        \gamma^{l}
        \circeq \int_0^1 \frac{\sigma^2(s)}{\bar{\sigma}(s \wedge \lambda^l)} ds
        = \mathcal{J}_{\sigma^2 / \bar{\sigma}}(\lambda^l) + \frac{\mathcal{J}_{\sigma^2}(\lambda^l,1)}{\bar{\sigma}_l},  \ \ 1\leq l\leq m,
        \quad \gamma^0\circeq 0 \hspace{0.3mm}.
    \end{equation}

    \begin{theorem}[Log-number of high points or Entropy]\label{thm:IGFF.high.points}
        Let $\{\psi_v\}_{v\in V_N}$ be the $(\boldsymbol{\sigma},\boldsymbol{\lambda})$-GFF on $V_N$ of Definition \ref{def:IGFF} and let $\gamma^{l-1} \leq \gamma < \gamma^{l}$ for some $l\in \{1,...,m\}$, then
        \begin{equation*}
            \lim_{N\rightarrow\infty} \frac{\log |\mathcal{H}_N^{\gamma}|}{\log N^2} =
            (1 - \lambda^{l-1}) - \frac{(\gamma - \mathcal{J}_{\sigma^2 / \bar{\sigma}}(\lambda^{l-1}))^2}{\mathcal{J}_{\sigma^2}(\lambda^{l-1},1)}
            \circeq \mathcal{E}_{\gamma}
            \quad
            \text{in probability\hspace{0.3mm}.}
        \end{equation*}
    \end{theorem}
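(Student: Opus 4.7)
The plan is to establish matching upper and lower bounds on $\log|\mathcal{H}_N^\gamma|/\log N^2$ using the truncated second moment method of \cite{MR3380419}. The combinatorial scaffold is dictated by Theorem \ref{thm:IGFF.order1}: at each jump scale $\lambda^k$ of $\bar{\sigma}$ with $k \leq l-1$, a candidate $\gamma$-high point $v$ must have $\psi_v(\lambda^k)$ pinned at the concavified value $\mathcal{J}_{\sigma^2/\bar{\sigma}}(\lambda^k)\log N^2$, after which the residual increment on $[\lambda^{l-1},1]$ must cover the gap $\gamma - \mathcal{J}_{\sigma^2/\bar{\sigma}}(\lambda^{l-1})$. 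Any trajectory that overshoots the concavified barrier at some intermediate scale $\lambda^k$ could be used, via Theorem \ref{thm:IGFF.order1} applied to the restriction of the field on the mesoscopic box $[v]_{\lambda^k}$, to exceed the maximum level $\gamma^\star$, which is forbidden with high probability.

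For the upper bound, I would enforce this barrier at each scale $\lambda^k$, $k \leq l-1$, decompose the path into independent Gaussian increments $\nabla \psi_v(\lambda^k)$ of variances proportional to $\mathcal{J}_{\sigma^2}(\lambda^{k-1},\lambda^k)\log N^2$, and apply a union bound. Summing the Gaussian tails over the intermediate path values and optimizing the total cost reveals that the minimum is attained by following the concavified trajectory up to scale $\lambda^{l-1}$ (saturating the ballot constraints) and then growing linearly on $[\lambda^{l-1},1]$ to reach level $\gamma$. The cost of the pinned prefix exactly absorbs a factor of $N^{2\lambda^{l-1}}$, leaving an entropy $(1-\lambda^{l-1})$ and a terminal Gaussian cost $(\gamma-\mathcal{J}_{\sigma^2/\bar{\sigma}}(\lambda^{l-1}))^2/\mathcal{J}_{\sigma^2}(\lambda^{l-1},1)$; together these yield $\mathbb{E}|\mathcal{H}_N^\gamma| \leq N^{2\mathcal{E}_\gamma + o(1)}$, and Markov's inequality concludes the upper bound in probability.

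For the lower bound, I would define a truncated set $\widetilde{\mathcal{H}}_N^\gamma$ of points $v$ with $\psi_v(\lambda^k) - \mathcal{J}_{\sigma^2/\bar{\sigma}}(\lambda^k)\log N^2 \in [-C,0]$ for each $k \leq l-1$ together with $\psi_v \geq \gamma\log N^2$. A first-moment computation along this pinned corridor gives $\mathbb{E}|\widetilde{\mathcal{H}}_N^\gamma| \asymp N^{2\mathcal{E}_\gamma}$. The main work lies in the second moment $\mathbb{E}|\widetilde{\mathcal{H}}_N^\gamma|^2$, which one stratifies over pairs $(v,w)$ according to $\lambda^\ast(v,w) \circeq \sup\{\lambda : w\in [v]_\lambda\}$ and bounds using the approximate independence of increments above $\lambda^\ast$; the crucial role of the truncation is to cap the shared prefix at the concavified value, which is precisely what prevents pairs close at intermediate scales from spoiling the calculation. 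Paley--Zygmund and a standard concentration step then yield $|\widetilde{\mathcal{H}}_N^\gamma| \geq N^{2\mathcal{E}_\gamma - o(1)}$ with high probability. The main obstacle I expect is the GFF-specific comparison between the field inside a mesoscopic box $[v]_\lambda$ and a fresh independent GFF on that box: by the Markov property, the restriction decomposes as the harmonic extension of the boundary values plus an independent interior GFF, and the cascade argument requires quantitative control showing that this harmonic extension is essentially constant on sub-boxes at the relevant scales, with error fitting inside the $O(1)$ truncation window. Once that covariance-decoupling input is secured, the remaining calculation parallels the time-inhomogeneous branching random walk analysis of \cite{MR2968674}.
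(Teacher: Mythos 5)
Your upper bound is, in substance, the paper's own argument: a first moment \emph{truncated} by the barrier $\psi_v(\lambda^k)\lesssim \mathcal{J}_{\sigma^2/\bar{\sigma}}(\lambda^k)\log N^2$ at the jump scales $k\le l-1$, Markov's inequality for the truncated count, and a separate estimate showing that the barrier is violated by no point with high probability (the paper gets the latter directly, by bounding $\prob{}{|\mathcal{H}_{N,j}^{\gamma-j\varepsilon,\varepsilon}|\geq 1}$ exactly as in Lemma \ref{lem:IGFF.order1.upper.bound}, using that $L_N^{\gamma}$ and $L_N^{\star}$ coincide on $[0,\lambda^{l-1}]$; your alternative route via Theorem \ref{thm:IGFF.order1} applied inside the mesoscopic boxes $[v]_{\lambda^k}$ is plausible but heavier, since it needs the harmonic-extension control you mention plus a conditional-independence argument over the boxes). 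One slip of wording: $\esp{}{|\mathcal{H}_N^{\gamma}|}\le N^{2\mathcal{E}_\gamma+o(1)}$ is false when $l\ge 2$ --- the unrestricted expectation has exponent $2\bigl(1-\gamma^2/\mathcal{J}_{\sigma^2}(1)\bigr)>2\mathcal{E}_\gamma$ --- it is only the barrier-restricted count whose expectation is $N^{2\mathcal{E}_\gamma+o(1)}$; in context this is what you mean, so it is not a conceptual error.

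The genuine gap is in the lower bound. Your truncation pins $\psi_v(\lambda^k)$ only at the jump scales $k\le l-1$ and puts no constraint whatsoever on $\psi_v(s)$ for $s\in(\lambda^{l-1},1)$ (nor between consecutive pins), and then you invoke Paley--Zygmund. This does not control the second moment: pairs $(v,w)$ whose branching scale $\rho$ lies in $(\lambda^{l-1},1)$ may share an atypically \emph{high} value at scale $\rho$, and the optimal shared value in the two-replica computation exceeds $L_N^{\gamma}(\rho)$. The cleanest counterexample is already the homogeneous case $m=l=1$, $\sigma\equiv 1$, where your truncated set is the plain set of $\gamma$-high points: optimizing over the shared value $a\log N^2$ at overlap $\rho$ gives a two-replica exponent (in powers of $N^2$) of $\sup_{\rho}\bigl\{2-\rho-\tfrac{2\gamma^2}{1+\rho}\bigr\}=2(1-\gamma^2)+(\sqrt{2}\gamma-1)^2$ for $\gamma>1/\sqrt{2}$, so $\esp{}{X^2}/(\esp{}{X})^2$ grows like a positive power of $N$ and Paley--Zygmund only yields a probability bounded below by a negative power of $N$; no ``standard concentration step'' can upgrade that to the required $1-O(N^{-c})$. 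The same blow-up occurs in the general case whenever $\gamma-\mathcal{J}_{\sigma^2/\bar{\sigma}}(\lambda^{l-1})$ is large relative to $\mathcal{J}_{\sigma^2}(\lambda^{l-1},1)$. This is precisely why the paper's Lemma \ref{lem:IGFF.high.points.lower.bound} does not pin at the $\lambda^k$'s but instead constrains the \emph{increments} $\nabla\psi_v(\alpha_j)$ along a fine grid of $K$ scales all the way up to scale $1$ (dropping the first $r$ increments for independence), above the increments of the tilted path $L_{N,\varepsilon}^{\gamma}=(1-\varepsilon)L_N^{\gamma(1+4\varepsilon)}$, and uses the monotonicity property \eqref{eq:lem:IGFF.high.points.lower.bound.sub.optimal.path.inequality} (increments of $L_{N,\varepsilon}^{\gamma}$ dominated by those of $L_{N,\varepsilon}^{\star}$, valid because $\gamma(1+4\varepsilon)<\gamma^l$) to make every branching-scale stratum of the second moment negligible. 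To close your argument you would have to replace the jump-scale pinning by such an all-scales increment (or barrier) constraint, or by a Daviaud-style recursive argument; as written, the lower bound does not go through.
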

    The homogeneous case where $M=1$ and $\sigma_1 = 1$ was proved in \citet{MR2243875}.
    In that case, we have $\mathcal{E}_{\gamma}=1-\gamma^2$ as for $N^2$ i.i.d.~Gaussian variables of mean \hspace{-0.2mm}$0$ and \hspace{-0.2mm}variance \hspace{-0.2mm}$\log N$.
    \hspace{-0.4mm}The proofs of Theorems \hspace{-0.2mm}\ref{thm:IGFF.order1} \hspace{-0.2mm}and \hspace{-0.2mm}\ref{thm:IGFF.high.points} \hspace{-0.2mm}are deferred to Section \ref{sec:main.results.proof}.
    The method of proof is explained in Section \ref{sec:outline.proofs}.
    It is a refinement of the second moment method based on the control of the increments of high points at every scale.
    The method was used in \citet{MR3380419} to obtain a new proof of the first order of the maximum in the homogeneous case.
    Here we extend this method to the log-number of high points in all settings and to the first order of the maximum in the inhomogeneous setting.
    In the scale-dependent case, as opposed to the homogeneous case, it is necessary to truncate the first moment using the information at every scale $\lambda^l$ to get the correct upper bound.

    \subsection{Related works and conjectures}\label{sec:related.work.conjectures}

    The scale-inhomogeneous GFF is the equivalent of the time-inhomogeneous branching random walk (IBRW) where the variance of the random walk is a function of time.
    In particular, Theorems \ref{thm:IGFF.order1} and \ref{thm:IGFF.high.points} can be proved for branching random walks using the same technique, see Section $2$ of \citet{Ouimet2014master}.
    In fact, much more precise information is known about the maxima of these models.
    In \citet{MR2070335}, the authors introduce a continuous version of Derrida's {\it Generalized Random Energy Model} (GREM) \citet{Derrida85}, which is akin to a time-inhomogeneous branching random walk, for which they obtain the first order of the maximum and the free energy.
    In particular, they noticed the concavification phenomenon for the first order.
    This observation also appears in \citet{MR883541} for the GREM.
    A model interpolating between the GREM and the branching random walk was introduced in \citet{MR3358969} where Poisson statistics of the extremes are proved.
    For Gaussian IBRWs with two values of the variance ($M=2$), the lower order corrections for the maximum and tightness of the law were proved in \citet{MR2968674}.
    In this case, convergence of the extremal processes and of the law of the recentered maximum have been shown in \citet{MR3164771}.
    This is also proved in the case where the integral of the variance remains strictly below its concave hull (for example, in the case of increasing variances), see \citet{MR3351476}.
    For strictly decreasing variances, the lower order corrections for IBBMs exhibit a slowdown of the order $t^{1/3}$ as proved in \citet{MR2981635,MR3531703}.
    Similar results for non-Gaussian IBRWs and more general variances are proved in \citet{MR3373310}, though not at the level of convergence of the law.
    In \citet{arXiv:1509.08172}, the second order of the maximum for the Gaussian IBRW with a finite number of variances is shown by generalizing the approach of \citet{MR2968674} and the tightness follows from \citet{MR3012090}.

    In general, we expect that the scale-inhomogeneous GFF with a finite number of variances behave as the time-inhomogeneous branching random walk with the same parameters for the lower order correction term of the maximum and for its law.
    For the homogeneous GFF, the convergence of the law of the recentered maximum was proved in \citet{MR3433630}.
    \hspace{-0.3mm}In \hspace{-0.3mm}\citet{MR3354619}, \hspace{-0.3mm}the scale-inhomogeneous GFF with two values of the variance was introduced to prove Poisson-Dirichlet statistics for the extremes of the homogeneous GFF.
    Actual Poisson statistics for local extremes was proved later in \citet{MR3509015}.

    One interest of Definition \ref{def:IGFF} for the scale-inhomogeneous GFF is that it can be extended to a piecewise smooth variance function $\sigma : [0,1] \to [a,b]$ where $a > 0$.
    Consider the two-dimensional continuous Gaussian free field $\phi = \{\phi_v\}_{v\in[0,1]^2}$ on the unit square $[0,1]^2$, see e.g. \citet{MR2322706} for a definition.
    The field $\phi$ cannot be defined as a random function. However, averages over sets make sense as random variables.
    In particular, for every $v\in[0,1]^2$ and $\lambda\in [0,1]$, one can define $\phi_v^r(\lambda)$ as the average of the field over a circle of radius $r^{\lambda}$ :
    \begin{equation}\label{eq:continuous.GFF.conditional.expectation}
        \phi_v^r(\lambda) \circeq \frac{1}{2\pi r^{\lambda}}\int_0^{2\pi} \phi_{v+ r^{\lambda} e^{i\theta}}\ d\theta\ .
    \end{equation}
    The parameter $r$ plays the role of $N^{-1}$ in the discrete setting. The continuous scale-inhomogeneous GFF for the variance function $\lambda\mapsto \sigma(\lambda)$ can then be defined in terms of these averages :
    \vspace{-1.5mm}
    \begin{equation*}
        \psi_v^r(1)\circeq \int_0^1 \sigma(\lambda) ~d\phi_v^r(\lambda), \ \ \ v\in [0,1]^2.
    \end{equation*}
    The stochastic integral makes sense because $(\phi_v^r(\lambda))_{\lambda\in [0,1]}$ is a Gaussian martingale.
    Following the definition in \citet{MR2819163} (up to a factor $2$), a point $v\in [0,1]^2$ is called {\it $\gamma$-thick} if
    \vspace{-1mm}
    \begin{equation*}
        \lim_{r\to 0} \frac{\psi_v^r(1)}{\log (r^{-2})}\geq \gamma
    \end{equation*}
    where it is assumed that the continuous Green function on \hspace{-0.15mm}$[0,1]^2$ \hspace{-0.2mm}associated to \hspace{-0.1mm}$\phi$ has been normalized as in \eqref{eqn: G}.
    This is the notion analogous to $\gamma$-high points.
    It was shown in \citet{MR2642894} that the Hausdorff dimension of the set of $\gamma$-thick points is $2(1-\gamma^2)$ when $\sigma \equiv 1$.
    In view of Theorem \ref{thm:IGFF.high.points}, it is reasonable to conjecture that the Hausdorff dimension of the set of $\gamma$-thick points of $\psi$ is
    \begin{equation}\label{eq:entropy.continuous.GFF}
        2\left((1 - \lambda_{\star}) - \frac{(\gamma - \mathcal{J}_{\sigma^2 / \bar{\sigma}}(\lambda_{\star}))^2}{\mathcal{J}_{\sigma^2}(\lambda_{\star},1)}\right),
    \end{equation}
    where $\lambda_{\star} \circeq \inf\{\lambda\in [0,1] : \gamma \leq \int_0^1 \frac{\sigma^2(s)}{\bar{\sigma}(s \wedge \lambda)} ds\}$.

    \section{Outline of Proof}\label{sec:outline.proofs}

    As stated before, the results of this paper are applicable to time-inhomogeneous branching random walks and, more generally, to any scale-dependent log-correlated Gaussian field.
    The proof relies on two main ingredients: an underlying approximate tree structure present in log-correlated models and an adaptation of the multiscale refinement of the second moment method introduced in \citet{MR3380419}. In particular, the method requires understanding the increments of high points along every scale to prove tight upper and lower bounds. In \citet{MR3380419}, this method was used to streamline the proof of \citet{MR1880237} for the first order of the maximum of the homogeneous GFF.
    Here, we adapt the method to deal with scale-inhomogeneous fields and log-number of high points.

    To see the tree structure, define the {\it branching scale between $v$ and $v'$} in $V_N$ :
    \begin{equation}\label{eq:rho}
        \rho(v,v')\circeq \max\{\lambda\in [0,1] : [v]_\lambda \cap [v']_\lambda \neq \emptyset\}\ .
    \end{equation}
    This is the largest $\lambda$ for which the two neighborhoods $[v]_\lambda$ and $[v']_\lambda$ intersect.
    We always have by definition that $\|v-v'\|_2$ is of order $N^{1-\rho(v,v')}$.
    The branching scale plays the same role as the branching time (normalized to lie in $[0,1]$) in branching random walk.
    More precisely, let $\{\phi_v\}_{v\in V_N}$ be a homogeneous GFF and consider the increments $\phi_v(\lambda')- \phi_v(\lambda)$ and $\phi_{v'}(\mu')- \phi_{v'}(\mu)$ for some choice of $\lambda < \lambda'$ and $\mu < \mu'$.
    The Markov property of the Gaussian free field (see Section \ref{sec:tech.lemmas}) implies that for $\lambda,\mu > \rho(v,v')$,
    \begin{equation*}
        \phi_v(\lambda')- \phi_v(\lambda) \quad \text{is independent of} \quad \phi_{v'}(\mu')- \phi_{v'}(\mu),
    \end{equation*}
    because the neighborhoods $[v]_{\lambda}$ and $[v']_{\mu}$ are disjoint, see Figure \ref{fig:branching}.
    This means that the increments after the branching scale are independent.

    On the other hand, if $\lambda < \rho$, it can be shown using Green function estimates (see e.g. Lemma 12  in \citet{MR1880237}) that
    \begin{equation*}
        \var{}{\phi_v(\lambda)- \phi_{v'}(\lambda)} = O(1)\ .
    \end{equation*}
    In other words, the values of $\phi_v(\lambda)$ and $\phi_{v'}(\lambda)$ must be close.
    This suggests that the increments before the branching scale are almost identical.
    In particular, without losing much information, we can restrict the field $\{\phi_v(\lambda)\}_{v\in V_N}$ to a set $R_\lambda \subseteq V_N$ containing $\lfloor N^{\lambda} \rfloor^2$ $v$'s with neighborhoods $[v]_{\lambda}$ that can only touch at their boundary and are not cut off by $\partial V_N$.
    To remove any ambiguity, define $R_{\lambda}$ in such a way that $\max_{v\in V_N} \min_{z\in R_{\lambda}} \|v - z\|_2$ is minimum.
    We call $R_{\lambda}$ the {\it set of representatives at scale $\lambda$} and define $R_1 \circeq V_N$.
    For instance, if $N = 2^n$, $\lambda\in [0,1)$ and $\lambda n\in \N$, then divide $V_N$ into a grid with $N^{2\lambda}$ squares of side length $N^{1-\lambda}$, the center point of each square is a representative at scale $\lambda$.

    \begin{figure}[ht]
        \centering
        \includegraphics[scale=0.65]{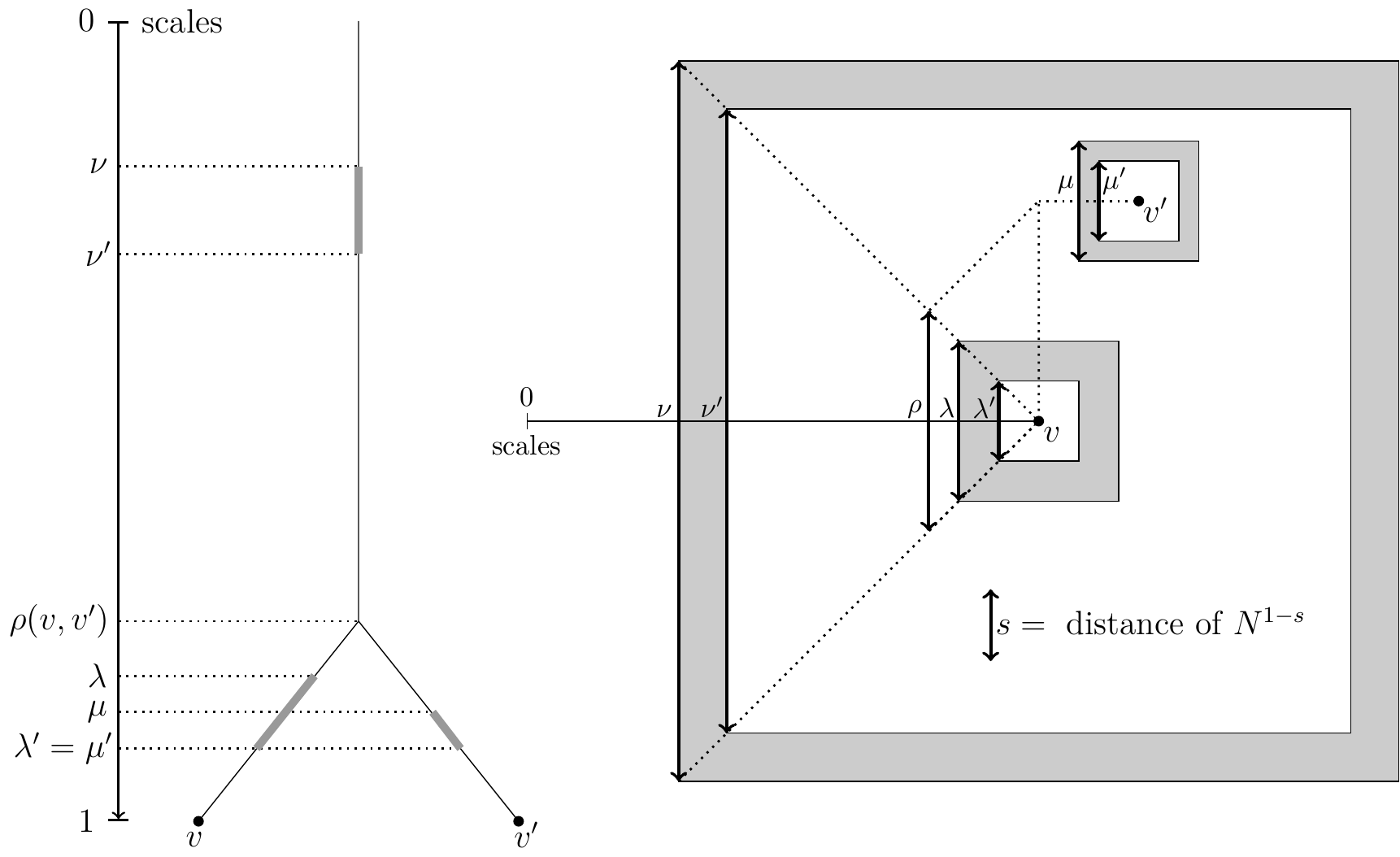}
        \captionsetup{width=0.85\textwidth}
        \caption{The branching structure of the GFF.}
        \label{fig:branching}
    \end{figure}

    Of course, the branching structure here is not exact as in branching random walk.
    In particular, nothing precise can be said on the increments $\phi_v(\lambda') - \phi_v(\lambda)$ and $\phi_{v'}(\lambda') - \phi_{v'}(\lambda)$ in the case where $\lambda < \rho < \lambda'$.
    However, the contribution of such increments can be made negligible by considering a large number of increments, as we shall do.
    This branching structure holds also for the $(\boldsymbol{\sigma},\boldsymbol{\lambda})$-GFF, since it is defined in terms of the increments of $\phi$, see \eqref{eqn: psi v} and Lemma \ref{lem:psi.Markov}.

    For $0 < \gamma < \gamma^{\star}$, the $\gamma$-high points are such that $\psi_v \geq \gamma \log N^2$.
    It is reasonable to expect that for these points, there exists a unique optimal path $\lambda \mapsto L_N^{\gamma}(\lambda)$ such that $\psi_v(\lambda) \geq L_N^{\gamma}(\lambda)$ at each scale $\lambda$.
    We write $L_N^\star$ for the corresponding optimal path in the case of the maximum level $\gamma^\star$.
    It is the information on these paths  along the scales that is crucial for the method to yield tight upper and lower bounds.
    We explain heuristically how to determine these optimal paths using first moments.

    Consider the set of $v$'s for which the increments of the field $\psi$ reach level $\nabla \gamma_i$ between each scale $\lambda_i$ :
    \begin{equation*}
        \Lambda_{N,M} \circeq \{v\in V_N \nvert \nabla \psi_v(\lambda_i) \geq \nabla \gamma_i \log N^2 ~\text{for all } i\in\{1,2,\dots, M\}\}\ ,
    \end{equation*}
    where $\gamma_0 \circeq 0$. By construction, $|\Lambda_{N,M}|$ is a lower bound on the number of points in $V_N$ reaching a height of $\gamma_M \log N^2$.
    We also consider the corresponding quantity at intermediate scales $\lambda_k < \lambda_M$.
    In this case, because of correlations, we can restrict ourselves to representatives at scale $\lambda_k$ :
    \begin{equation*}
        \Lambda_{N,k} \circeq \{v\in R_{\lambda_k} \nvert \nabla \psi_v(\lambda_i) \geq \nabla \gamma_i \log N^2 ~\text{for all } i\in\{1,2,\dots, k\}\}\ .
    \end{equation*}\vspace{0.5mm}

    There are $O(N^{2\lambda_k})$ representatives at scale $\lambda_k$ and the variance of the increments is $\mathbb{V}(\nabla \psi_v(\lambda_i)) \hspace{-0.5mm}= \sigma_i^2 \nabla \lambda_i \log N + O(1)$ if we ignore the boundary effect.
    Therefore, using the independence between the increments and standard Gaussian estimates (see Lemma \ref{lem:gaussian.estimates}, it will be used repeatedly) :
    \vspace{-2mm}
    \begin{equation*}
        \esp{}{|\Lambda_{N,k}|} \asymp N^{2 \lambda_k} \prod_{i=1}^k \prob{}{\nabla \psi_v(\lambda_i) \geq 2 \nabla \gamma_i \log N} \asymp \frac{N^{2 \lambda_k} N^{-2 \sum_{i=1}^k \frac{(\nabla \gamma_i)^2}{\sigma_i^2 \nabla \lambda_i}}}{(\log N)^{k/2}}\ ,
    \end{equation*}
    where $\asymp$ means that the ratio of the two sides lies in a compact interval bounded away from $0$, for $N$ large enough.
    In other words,
    \begin{equation*}
        \lim_{N\rightarrow\infty} \frac{\log(\esp{}{|\Lambda_{N,k}|})}{\log N^2} = \sum_{i=1}^k \left(\nabla \lambda_i - \frac{(\nabla \gamma_i)^2}{\sigma_i^2 \nabla \lambda_i}\right).
    \end{equation*}
    Since there should be representatives at each scale $\lambda_k$ that ultimately yield a high value at scale $\lambda_M$,
    it is intuitive that the level of the maximum can be found by maximizing
    \begin{equation*}
        \gamma_M = \sum_{i=1}^M \nabla \gamma_i \ \ \text{under the constraints} \ \ \sum_{i=1}^k \left(\nabla \lambda_i - \frac{(\nabla \gamma_i)^2}{\sigma_i^2 \nabla \lambda_i}\right) \geq 0\ , \ \ \ 1 \leq k \leq M\hspace{0.3mm}.
    \end{equation*}
    This optimization problem can be solved using the Karush-Kuhn-Tucker theorem (see Lemma \ref{lem:optimization.1}).
    We write $(\gamma_1^{\star},\gamma_2^{\star},...,\gamma_M^{\star})$ for the unique solution.
    We will make extensive use of the polygonal line $L_N^{\star}(\cdot)$ linking the points $(0,0)$, $(\lambda_1,\gamma_1^{\star} \log N^2)$, $(\lambda_2,\gamma_2^{\star} \log N^2)$, \ldots, $(\lambda_M,\gamma_M^{\star} \log N^2)$ to prove Theorem \ref{thm:IGFF.order1} and \ref{thm:IGFF.high.points} :
    \begin{equation}\label{eqn: LN star}
        L_N^{\star}(s)\circeq \int_0^s \frac{\sigma^2(r)}{\bar{\sigma}(r)} dr \log N^2 = \mathcal{J}_{\sigma^2 / \bar{\sigma}}(s) \log N^2, \quad s\in [0,1]\hspace{0.3mm}.
    \end{equation}
    This is the \textit{optimal path for the maximum}. Figure \ref{fig:optimal} shows an example of such a path.
    In particular, it is important to note that the optimal path coincides with its concave hull at each scale $\lambda^l$, namely
    \begin{equation}
        \label{eqn: LN points}
        L_N^{\star}(\lambda^l) = \hat{L}_N^{\star}(\lambda^l) = \hat{\mathcal{J}}_{\sigma^2 / \bar{\sigma}}(\lambda^l) \log N^2 = \mathcal{J}_{\bar{\sigma}}(\lambda^l) \log N^2, \quad 1 \leq l \leq m\ .
    \end{equation}

    \begin{figure}[ht]
        \centering
        \includegraphics[scale=0.55]{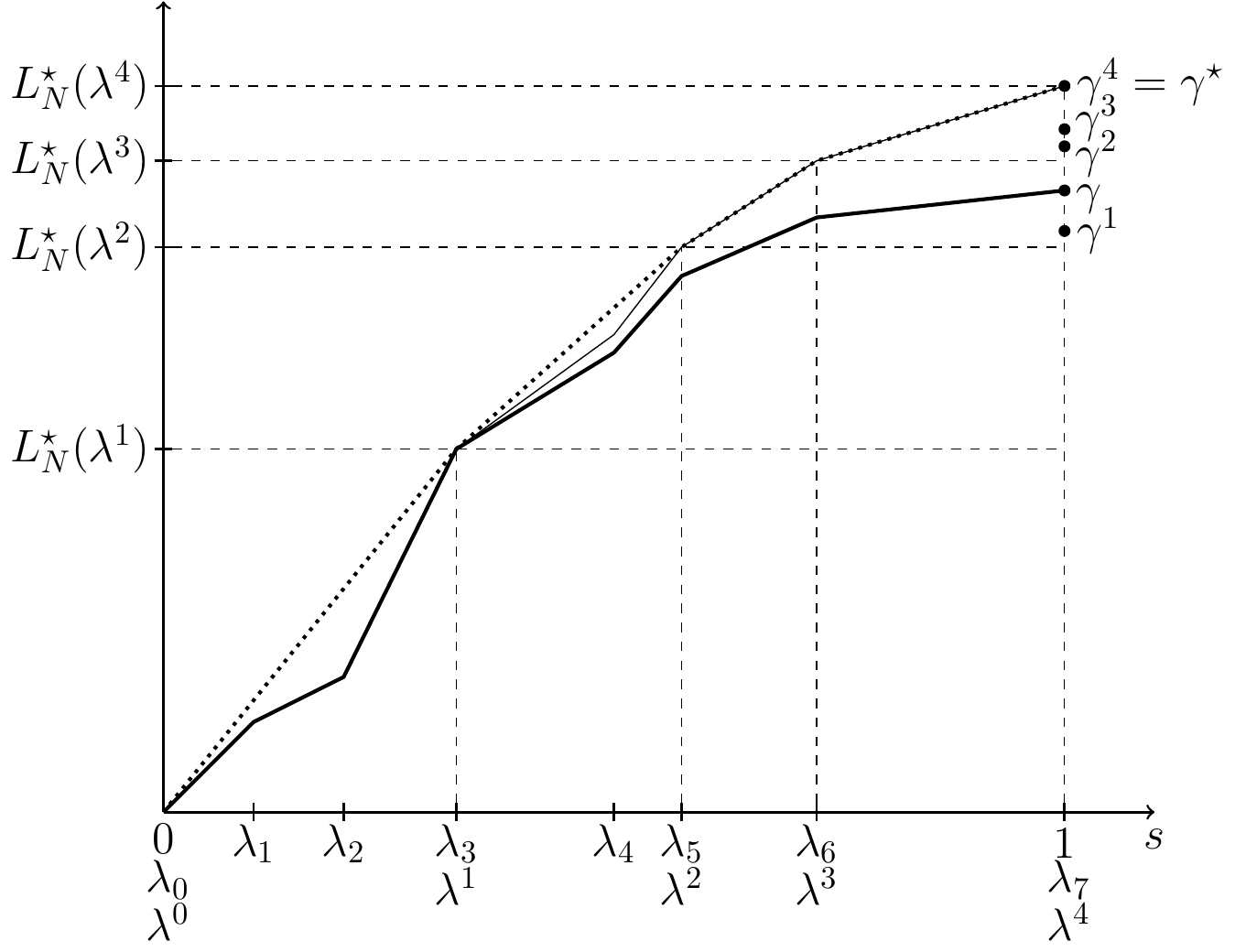}
        \captionsetup{width=0.85\textwidth}
        \caption{Example of $L_N^{\gamma}$ (bold line), $L_N^{\star}$ (thin line) and its concavified version $\hat{L}_N^{\star}$ (dotted line), with $7$ values for $\sigma^2$ and $\gamma^1 < \gamma < \gamma^2$.}
        \label{fig:optimal}
    \end{figure}
    The same heuristic can be used to determine the optimal path $L_N^{\gamma}(\cdot)$ for $\gamma$-high points, $0 < \gamma < \gamma^{\star}$.
    Setting now $\gamma_M=\gamma$, we get
    \begin{equation}\label{eq:energy.high.points}
        \lim_{N\rightarrow\infty} \frac{\log(\esp{}{|\Lambda_{N,M}|})}{\log N^2} = \sum_{i=1}^{M-1} \left(\nabla \lambda_i - \frac{(\nabla \gamma_i)^2}{\sigma_i^2 \nabla \lambda_i}\right) + \left(\nabla \lambda_M - \frac{(\gamma - \gamma_{M-1})^2}{\sigma_M^2 \nabla \lambda_M}\right).
    \end{equation}
    A \hspace{-0.1mm}lower \hspace{-0.1mm}bound \hspace{-0.1mm}for \hspace{-0.1mm}the \hspace{-0.1mm}log-number \hspace{-0.1mm}of \hspace{-0.1mm}$\gamma$-high \hspace{-0.1mm}points \hspace{-0.1mm}can \hspace{-0.1mm}be \hspace{-0.1mm}found \hspace{-0.1mm}by \hspace{-0.1mm}maximizing \hspace{-0.1mm}\eqref{eq:energy.high.points} with respect to $\gamma_1, \gamma_2, \ldots,\gamma_{M-1}$ and under the constraints
    \begin{equation}\label{eq:constraints.optimization.high.points}
        \sum_{i=1}^k \left(\nabla \lambda_i - \frac{(\nabla \gamma_i)^2}{\sigma_i^2 \nabla \lambda_i}\right) \geq 0, \ \ \ 1 \leq k \leq M-1\ .
    \end{equation}
    The unique solution to this problem is found in Lemma \ref{lem:optimization.2} using again the Karush-Kuhn-Tucker theorem.
    The form of the path will always depend on the critical levels defined in \eqref{eq:critic.levels}. Whenever $\gamma^{l-1} \leq \gamma < \gamma^l$, the \textit{optimal path for $\gamma$-high points} is :
    \begin{equation}\label{eqn: gamma optimal}
        \hspace{2mm}
        L_N^{\gamma}(s) \circeq
        \left\{\hspace{-1.5mm}
        \begin{array}{ll}
            \mathcal{J}_{\sigma^2 / \bar{\sigma}}(s) \log N^2, & \hspace{-1.5mm}0 \leq s \leq \lambda^{l-1} \\
            \left(\mathcal{J}_{\sigma^2 / \bar{\sigma}}(\lambda^{l-1}) + \frac{\mathcal{J}_{\sigma^2}(\lambda^{l-1},s)}{\mathcal{J}_{\sigma^2}(\lambda^{l-1},1)} (\gamma - \mathcal{J}_{\sigma^2 / \bar{\sigma}}(\lambda^{l-1}))\right) \log N^2, & \hspace{-1.5mm}\lambda^{l-1} \leq s \leq 1.
        \end{array}
        \right.
    \end{equation}
    The path coincide on $[0,\lambda^{l-1}]$ with the optimal path for the maximum.
    Also, note that $L_N^{\gamma}$ is continuous and converges uniformly to $L_N^{\star}$ as $\gamma \rightarrow \gamma^{\star}$ (which yields that $L_N^{\star}$ is continuous as well).

\section{Proofs of the main results}\label{sec:main.results.proof}
    \subsection{\hspace{-0.5mm}Preliminaries}

        For all $\lambda\in [0,1]$, recall that $\psi_v(\lambda) \circeq \mathbb{E}[\psi_v \nvert \F_{\partial [v]_{\lambda} \cup [v]_{\lambda}^c}]$.
        By the Markov property of the GFF (see Lemma \ref{lem:psi.Markov}), it is not hard to show that for any partition $0 \circeq s_0 < s_1 < ... < s_K \circeq 1$ of $[0,1]$ such that $\{\lambda_i\}_{i=0}^M \subseteq \{s_j\}_{j=0}^K$, we have for all $1 \leq k \leq l \leq K$ :
        \vspace{-2mm}
        \begin{equation*}
            \psi_v(s_l) - \psi_v(s_{k-1}) = \sum_{j=k}^l \sigma(s_j) \nabla \phi_v(s_j) \ .
        \end{equation*}
        In particular, the independence of the increments of $\psi$ follows directly from the one for $\phi$.
        Moreover, using standard estimates on Green functions, Lemma \ref{lem:tech.lemma.1} shows that
        \begin{equation}\label{eq:borne.variance.delta}
            -C_1(\delta) \leq \var{}{\psi_v(s_l) - \psi_v(s_{k-1})} - \mathcal{J}_{\sigma^2}(s_{k-1},s_l) \log N \leq C_2
        \end{equation}
        for all $v\in V_N^{\delta}$ and $N$ large enough (depending on $\delta$), where
        \begin{equation*}
            V_N^\delta \circeq \{v\in V_N ~| \min_{z\in \partial V_N} \|v - z\|_2 \geq \delta N\}, \ \ \ \delta \in (0,1/2]\ .
        \end{equation*}
        The set $V_N^{\delta}$ contains the points that are at a distance at least $\delta N$ from the boundary of $V_N$. Lemma \ref{lem:tech.lemma.2} proves that the upper bound in \eqref{eq:borne.variance.delta} holds on $V_N$, that is
        \begin{equation}\label{eq:borne.variance.upper}
            \max_{v\in V_N} \var{}{\psi_v(s_l) - \psi_v(s_{k-1})} \leq \mathcal{J}_{\sigma^2}(s_{k-1},s_l) \log N ~+ ~C
        \end{equation}
        for $N$ large enough.

        Throughout the proofs, $c$ and $C$ will denote positive constants whose value can change at different occurrences and might depend on the parameters $(\boldsymbol{\sigma},\boldsymbol{\lambda})$. For simplicity, equations in the proofs are implicitly stated to hold for $N$ large enough where it is needed.

    \subsection{\hspace{-0.5mm}First order of the maximum}

        \hspace{-0.5mm}Theorem \ref{thm:IGFF.order1} is a direct consequence of Lemma \ref{lem:IGFF.order1.upper.bound}, which proves that $\gamma^{\star}\log N^2$ is an upper bound on the first order of the maximum, and Lemma  \ref{lem:IGFF.order1.lower.bound} which shows the corresponding lower bound.

        \begin{lemma}[Upper bound on the first order of the maximum]\label{lem:IGFF.order1.upper.bound}
            Let $\{\psi_v\}_{v\in V_N}$ be the $(\boldsymbol{\sigma},\boldsymbol{\lambda})$-GFF on $V_N$ of Definition \ref{def:IGFF} and $\gamma^{\star}$ as in Theorem \ref{thm:IGFF.order1}. For all $\varepsilon > 0$, there exists a constant $c = c(\varepsilon,\boldsymbol{\sigma},\boldsymbol{\lambda}) > 0$ such that
            \begin{equation}\label{eq:IGFF.order1.upper.bound}
                \prob{}{\max_{v\in V_N} \psi_v \geq (\gamma^{\star} + m \varepsilon) \log N^2} \leq N^{-c}
            \end{equation}
            for $N$ large enough.
        \end{lemma}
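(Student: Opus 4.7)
The plan is a truncated first moment bound in the spirit of \cite{MR3380419}, adapted to the inhomogeneous setting by truncating at \emph{every} breakpoint $\lambda^l$ of the concave hull. The reason a naive union bound on $\mathbb{P}(\psi_v \geq (\gamma^\star + m\varepsilon) \log N^2)$ fails is that whenever $\bar\sigma$ is nonconstant one has $(\gamma^\star)^2 < \mathcal{J}_{\sigma^2}(1)$ by strict Cauchy--Schwarz, so the expected number of such $v$ may grow polynomially in $N$. Forcing any contributing $v$ to hug $L_N^\star$ from above at every breakpoint recovers the correct $N^{-2}$ decay per point.

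Set $Y_0 \circeq 0$, $Y_l \circeq (\mathcal{J}_{\bar\sigma}(\lambda^l) + \varepsilon) \log N^2$ for $1 \leq l \leq m-1$, and $Y_m \circeq (\gamma^\star + m\varepsilon) \log N^2$, and define
\begin{equation*}
 |\mathcal{A}_N^{\mathrm{tr}}| \circeq \#\{v \in V_N : \psi_v \geq Y_m,\ \psi_v(\lambda^l) \leq Y_l\ \forall\, 1 \leq l < m\},
 \quad
 \mathcal{B}_{N,l} \circeq \{\exists\, v \in V_N : \psi_v(\lambda^l) > Y_l\}.
\end{equation*}
Any $v$ with $\psi_v \geq Y_m$ either respects all the truncations or violates one, so
\begin{equation*}
 \bigl\{\max_{v \in V_N} \psi_v \geq Y_m\bigr\}\ \subseteq\ \{|\mathcal{A}_N^{\mathrm{tr}}| \geq 1\}\ \cup\ \bigcup_{l=1}^{m-1} \mathcal{B}_{N,l}.
\end{equation*}

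For $\mathbb{E}[|\mathcal{A}_N^{\mathrm{tr}}|]$, I would invoke Lemma \ref{lem:psi.Markov} to get independence of the increments $\nabla_l \psi_v$, and combine \eqref{eq:borne.variance.upper} with the identity $\hat{\mathcal{J}}_{\sigma^2}(\lambda^l) = \mathcal{J}_{\bar\sigma^2}(\lambda^l)$ at breakpoints to obtain $\var{}{\nabla_l \psi_v} \leq \bar\sigma_l^2(\lambda^l - \lambda^{l-1}) \log N + O(1)$. Iteratively integrating the joint Gaussian density against each upper truncation $\{\psi_v(\lambda^l) \leq Y_l\}$ concentrates at the corner $\psi_v(\lambda^l) = Y_l$, yielding
\begin{equation*}
 \mathbb{P}\bigl(\psi_v \geq Y_m,\ \psi_v(\lambda^l) \leq Y_l\ \forall\, l < m\bigr) \leq C \exp\Bigl(-\sum_{l=1}^m \tfrac{(Y_l - Y_{l-1})^2}{2\,\var{}{\nabla_l \psi_v}}\Bigr).
\end{equation*}
With the chosen $Y_l$ the $\varepsilon$'s cancel in interior intervals, each summand contributes $2(\lambda^l - \lambda^{l-1}) \log N$ at leading order, and telescoping $\sum_l (\lambda^l - \lambda^{l-1}) = 1$ gives exactly $2\log N$, plus an extra strictly positive $c(\boldsymbol\sigma, \boldsymbol\lambda)\,\varepsilon \log N$ coming from the first and last intervals. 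A union bound over $v \in V_N$ then gives $\mathbb{E}[|\mathcal{A}_N^{\mathrm{tr}}|] \leq C N^{-c\varepsilon}$, hence $\mathbb{P}(|\mathcal{A}_N^{\mathrm{tr}}| \geq 1) \leq C N^{-c\varepsilon}$ by Markov.

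Each $\mathcal{B}_{N,l}$ is then controlled by the \emph{same} truncated first moment, applied one level down: $\psi_v(\lambda^l)$ is constant up to $O(1)$ on boxes at scale $\lambda^l$, so $\mathcal{B}_{N,l}$ reduces to the analogous statement for $\{\psi_z(\lambda^l)\}_{z \in R_{\lambda^l}}$, whose effective concave hull has only $l<m$ breakpoints. Decomposing this event into a truncated count at scale $\lambda^l$ (again $\leq CN^{-c\varepsilon}$ by the identical calculation, restricted to the first $l$ intervals) and the earlier events $\mathcal{B}_{N,k}$, $k<l$, one proceeds by downward recursion on $l$; the base case $l=1$ is a plain union bound over $R_{\lambda^1}$ with Gaussian tail, since $\bar\sigma \equiv \bar\sigma_1$ on $[0,\lambda^1]$ makes Cauchy--Schwarz tight and the extra $\varepsilon$ alone creates the necessary negative exponent. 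Collecting all contributions gives the claimed bound. The main obstacle is the iterated Gaussian computation in the central step: verifying the corner-concentration at each level and tracking the algebra so that $\sum_l (Y_l - Y_{l-1})^2/(2\,\var{}{\nabla_l \psi_v})$ equals $2\log N$ to leading order regardless of $m$ --- a reflection of $L_N^\star$ being the solution of the Karush--Kuhn--Tucker variational problem that defines $\gamma^\star$.
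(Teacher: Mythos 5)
Your strategy is in essence the paper's own: a Kistler-style multiscale truncation of the first moment at the breakpoints $\lambda^1<\dots<\lambda^m$ of the concave hull, reduction of the coarse-scale events to the representatives $R_{\lambda^l}$, and a per-scale union bound whose exponent telescopes to $2\log N$ plus a positive $\varepsilon$-margin; unfolding your downward recursion on the events $\mathcal{B}_{N,l}$ reproduces exactly the paper's decomposition according to the first scale at which the optimal level is exceeded, and your use of $\mathcal{J}_{\sigma^2}(\lambda^{l-1},\lambda^l)=\bar\sigma_l^2\nabla\lambda^l$ at breakpoints is the paper's \eqref{eqn: LN points} combined with \eqref{eq:borne.variance.upper}. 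So the architecture is right and the bookkeeping of the $\varepsilon$-margins is correct.

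The genuine gap is the central displayed estimate, which you assert rather than prove, and which is false as stated because the increment variances are only bounded \emph{above} by $\bar\sigma_l^2\nabla\lambda^l\log N+C$; near $\partial V_N$ they can degenerate to $O(1)$. Already with two scales: if $X_1,X_2$ are independent centered Gaussians with $\var{}{X_1}=v_1\to 0$ and $\var{}{X_2}=v_2$ fixed, then
\begin{equation*}
\prob{}{X_1\le Y_1,\ X_1+X_2\ge Y_2}\ \longrightarrow\ \prob{}{X_2\ge Y_2},
\end{equation*}
which is not $O\bigl(\exp(-Y_1^2/(2v_1)-(Y_2-Y_1)^2/(2v_2))\bigr)$ for any constant $C$ uniform in the variances. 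The corner-concentration you invoke does hold when all increment variances are of full order, and it uses crucially that $\bar\sigma_l<\bar\sigma_{l-1}$ strictly across breakpoints (so that the tilt produced by the last tail dominates the slope of the previous Gaussian density); but to make the first moment bound uniform over $v\in V_N$ one must treat separately the regime of anomalously small variances. This uniform-in-variance case analysis is precisely the content of the paper's Lemma \ref{lem: recursion} (the dichotomy $a_l-\beta_{l-1}\ge 1/\sqrt{z_{l-1}}$ versus not), i.e.\ exactly the step your proposal flags as ``the main obstacle'' without carrying it out, and it is where most of the work in the paper's proof lies. A second, more minor, imprecision: $\psi_v(\lambda^l)$ is not ``constant up to $O(1)$'' on boxes at scale $\lambda^l$; only $\var{}{\psi_v(\lambda^l)-\psi_{v_{\lambda^l}}(\lambda^l)}=O(1)$ (Lemma \ref{lem:tech.lemma.3}), so the reduction to representatives requires a union-bound event such as the paper's $A_\varepsilon$, paid for by an extra $\eta_\varepsilon\log N^2$ allowance in the levels, rather than a deterministic $O(1)$ replacement.
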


        \begin{proof}
            Recall the definition of the optimal path $L_N^\star$ from \eqref{eqn: LN star} and define
            \begin{equation*}
                L_N^{\star,z}(s) \circeq L_N^{\star}(s) + z \log N^2, \ \ \ s\in [0,1]\ .
            \end{equation*}
            Recall the definition of $\lambda^j$ in \eqref{eqn: lambda jumps} and the notation $R_{\lambda^j}$ for the set of representatives at scale $\lambda^j$.
            Consider the set of representatives whose value reached just over the optimal level at $\lambda^j$ :
            \begin{equation*}
                \mathcal{H}_{N,j}^{\star, \varepsilon} \circeq \left\{v\in R_{\lambda^j} \nvert \psi_v(\lambda^j) \geq L_N^{\star,j\varepsilon}(\lambda^j)\right\}, \ \ \ 1 \leq j \leq m\ .
            \end{equation*}
            The idea of the proof is to split the probability that at least one point in $V_N$ reaches just over the optimal height by looking at the first scale $\lambda^j, ~1 \leq j \leq m$, where the set $\mathcal{H}_{N,j}^{\star,\varepsilon}$ is not empty. This provides the appropriate constraints along the scales to get the correct upper bound.
            For $0 < \eta_{\varepsilon} < \varepsilon/m$, define
            \begin{equation*}
                A_{\varepsilon} \circeq \left\{|\psi_v(\lambda^j) - \psi_{v_{\lambda^j}}(\lambda^j)| \leq \eta_{\varepsilon} \log N^2 ~~\text{for all } j\in \{1,...,m\} ~\text{and all } v\in V_N\right\}
            \end{equation*}
            where $v_{\lambda}$ denotes any representative in $R_{\lambda}$ that is closest to $v$.
            Here we introduced the event $A_{\varepsilon}$ to approximate the branching structure of the field $\psi$.
            Since $R_{\lambda^m} = V_N$ by definition and $L_N^{\star,m\varepsilon}(\lambda^m) = (\gamma^{\star} + m\varepsilon) \log N^2$, a union bound gives the following upper bound on the probability in \eqref{eq:IGFF.order1.upper.bound} :
            \begin{align}\label{eqn: H split.1}
                    \mathbb{P}\hspace{-0.5mm}\left(|\mathcal{H}_{N,m}^{\star,\varepsilon}| \geq 1\right)
                    &\leq \mathbb{P}\hspace{-0.5mm}\left(A_{\varepsilon}^c\right) + \sum_{l=1}^m \mathbb{P}\hspace{-0.5mm}\left(\hspace{-0.5mm}\left\{|\mathcal{H}_{N,1}^{\star,\varepsilon}| = ... = |\mathcal{H}_{N,l-1}^{\star,\varepsilon}| = 0, |\mathcal{H}_{N,l}^{\star,\varepsilon}| \geq 1\right\} \hspace{-0.3mm}\cap \hspace{-0.3mm}A_{\varepsilon}\hspace{-0.3mm}\right) \notag \\
                    &\leq \mathbb{P}\hspace{-0.5mm}\left(A_{\varepsilon}^c\right) + \sum_{l=1}^m \mathbb{P}\hspace{-0.5mm}\left(\hspace{-1mm}
                    \left\{\hspace{-1mm}
                    \begin{array}{l}
                        \exists v\in R_{\lambda^l} \cap V_N^o ~~\text{s.t.} \\
                        \psi_v(\lambda^l) \geq L_N^{\star,l\varepsilon}(\lambda^l) ~\text{and} \\
                        \psi_{v_{\lambda^j}}(\lambda^j) < L_N^{\star,j\varepsilon}(\lambda^j) \\
                        \text{for all } 1 \leq j \leq l-1
                    \end{array}\hspace{-1mm}
                    \right\}
                    \cap A_{\varepsilon}
                    \right) \notag \\
                    &\leq C e^{-c(\eta_{\varepsilon}) (\log N)^2} \notag \\
                    &\quad+ \sum_{l=1}^m N^{2 \lambda^l} \hspace{-2mm}\max_{v\in R_{\lambda^l} \cap V_N^o} \hspace{-1mm}\mathbb{P}\hspace{-0.5mm}\left(\hspace{-1mm}
                    \left\{\hspace{-1mm}
                    \begin{array}{l}
                        \psi_v(\lambda^l) \geq L_N^{\star,l\varepsilon - \eta_{\varepsilon}}(\lambda^l) ~\text{and} \\
                        \psi_v(\lambda^j) < L_N^{\star,j(\varepsilon + \eta_{\varepsilon})}(\lambda^j) \\
                        \text{for all } 1 \leq j \leq l-1
                    \end{array}\hspace{-1mm}\right\}
                    \hspace{-1mm}\right)
            \end{align}
            The bound on $\prob{}{A_{\varepsilon}^c}$ follows easily from a union bound (with $m \cdot (N+1)^2$ terms), Gaussian estimates (Lemma \ref{lem:gaussian.estimates}) and the variance estimates of Lemma \ref{lem:tech.lemma.3}.

            It remains to consider the terms in the sum in \eqref{eqn: H split.1}. We look at the case $l=1$. Since $\max_{v\in V_N} \var{}{\psi_v(\lambda^1)} \leq \lambda^1 \bar\sigma_1^2 \log N + C$ from \eqref{eq:borne.variance.upper} and $L_N^{\star}(\lambda^1) = \lambda^1 \bar\sigma_1 \log N^2$, a Gaussian estimate shows that
            \begin{align*}
                \prob{}{\psi_v(\lambda^1) \geq L_N^{\star,\varepsilon - \eta_{\varepsilon}}(\lambda^1)}
                &\leq \frac{\sqrt{\var{}{\psi_v(\lambda^1)}}}{L_N^{\star,\varepsilon - \eta_{\varepsilon}}(\lambda^1)} \exp\left(-\frac{(L_N^{\star,\varepsilon - \eta_{\varepsilon}}(\lambda^1))^2}{2\var{}{\psi_v(\lambda^1)}}\right) \\
                &\leq \frac{C}{\sqrt{\log N}} N^{-2 \lambda^1} N^{-4 \frac{(\varepsilon - \eta_{\varepsilon})}{\bar{\sigma}_1}}\ .
            \end{align*}
            After multiplying by $N^{2\lambda^1}$, we conclude that the $l=1$ term in \eqref{eqn: H split.1} goes to $0$ like $N^{-c(\varepsilon)}$.
            We now show a similar estimate for a fixed $l\in \{2,...,m\}$.
            To simplify the notation, denote $(X_v^1,...,X_v^l) \circeq (\psi_v(\lambda^1),...,\psi_v(\lambda^l))$. By conditioning on the value of the vector $\boldsymbol{X} \circeq (X_v^1,...,X_v^{l-1})$, the probability in \eqref{eqn: H split.1} is equal to
            \begin{equation*}
                \int_{-\infty}^{L_N^{\star,1(\varepsilon + \eta_{\varepsilon})}(\lambda^1)} \hspace{-5mm}... \int_{-\infty}^{L_N^{\star,(l-1)(\varepsilon + \eta_{\varepsilon})}(\lambda^{l-1})} \hspace{-0.5mm}\prob{}{X_v^l \geq L_N^{\star,l\varepsilon - \eta_{\varepsilon}}(\lambda^l) \nvert \boldsymbol{X} = \boldsymbol{x}} \hspace{0.5mm}f_v(\boldsymbol{x}) ~d \boldsymbol{x}
            \end{equation*}
            where $f_v$ is the density function of $\boldsymbol{X}$. By independence of the increments, the last integral is equal to
            \begin{equation}\label{eq:lem:IGFF.order1.upper.bound.integral.1}
                \int_{-\infty}^{L_N^{\star,1(\varepsilon + \eta_{\varepsilon})}(\lambda^1)} \hspace{-5mm}... \int_{-\infty}^{L_N^{\star,(l-1)(\varepsilon + \eta_{\varepsilon})}(\lambda^{l-1})} \hspace{-1mm}\prob{}{\nabla X_v^l \geq L_N^{\star,l\varepsilon - \eta_{\varepsilon}}(\lambda^l) - x_{l-1}} \hspace{0.5mm}f_v(\boldsymbol{x}) ~d \boldsymbol{x}\ .
            \end{equation}
            Since $l \varepsilon - \eta_{\varepsilon} = (\varepsilon - l \eta_{\varepsilon}) + (l-1)(\varepsilon + \eta_{\varepsilon})$, a Gaussian estimate and the bound $\max_{v\in V_N} \var{}{\nabla X_v^l} \leq \bar\sigma_l^2 \nabla \lambda^l \log N + C$ from \eqref{eq:borne.variance.upper} give
            \begin{align}\label{eq:lem:IGFF.order1.upper.bound.gaussian.estimate}
                &\prob{}{\nabla X_v^l \geq L_N^{\star,l\varepsilon - \eta_{\varepsilon}}(\lambda^l) - x_{l-1}} \notag \\
                &\quad\leq \frac{\sqrt{\var{}{\nabla X_v^l}}}{L_N^{\star,l\varepsilon - \eta_{\varepsilon}}(\lambda^l) - x_{l-1}} \exp\left(\frac{-(\nabla L_N^{\star}(\lambda^l) + L_N^{\star,l \varepsilon - \eta_{\varepsilon}}(\lambda^{l-1}) -x_{l-1})^2}{2\var{}{\nabla X_v^l}}\right) \notag \\
                &\quad\leq \frac{C}{\sqrt{\log N}} N^{-2 \nabla \lambda^l} \exp\left(-2 \frac{(L_N^{\star,l \varepsilon - \eta_{\varepsilon}}(\lambda^{l-1}) - x_{l-1})}{\bar{\sigma}_l}\right) \notag \\
                &\quad= \frac{C}{\sqrt{\log N}} N^{-2 \nabla \lambda^l} N^{-4\frac{\varepsilon - l \eta_{\varepsilon}}{\bar{\sigma}_l}} \exp\left(-2 \frac{(L_N^{\star,(l-1)(\varepsilon + \eta_{\varepsilon})}(\lambda^{l-1}) - x_{l-1})}{\bar{\sigma}_l}\right).
            \end{align}
            To get the second inequality, we bounded the ratio using
            \begin{equation*}
                L_N^{\star,l\varepsilon - \eta_{\varepsilon}}(\lambda^l) - x_{l-1} \geq \nabla L_N^{\star}(\lambda^l) = \bar{\sigma}_l \nabla \lambda^l \log N^2
            \end{equation*}
            from the integration limits of $x_{l-1}$ in \eqref{eq:lem:IGFF.order1.upper.bound.integral.1}.
            It is convenient to do the change of variables $Y_{v,j} \circeq (\varepsilon + \eta_{\varepsilon}) \log N^2 + \nabla L_N^{\star}(\lambda^j) - \nabla X_v^j$  for all $j\in \{1,...,l-1\}$.
            Equation \eqref{eq:lem:IGFF.order1.upper.bound.integral.1} is then bounded, using \eqref{eq:lem:IGFF.order1.upper.bound.gaussian.estimate}, by
            \vspace{-2mm}
            \begin{equation}\label{eqn:  multiple}
                \frac{C N^{-4\frac{\varepsilon - l \eta_{\varepsilon}}{\bar{\sigma}_l}}}{N^{2\lambda^l} \sqrt{\log N}} \  N^{2 \lambda^{l-1}} \hspace{-1mm}\int_0^{\infty} \hspace{-2mm}\int_{-y_1}^{\infty} \hspace{-2mm}... \hspace{-1mm}\int_{-\sum_{j=1}^{l-2} y_j}^{\infty}\prod_{j=1}^{l-1} e^{-2 \frac{y_j}{\bar{\sigma}_l}}\ \frac{e^{-\frac{\left((y_j - (\varepsilon + \eta_{\varepsilon}) \log N^2) - \nabla L_N^{\star}(\lambda^j)\right)^2}{2 \var{}{Y_{v,j}}}}}{\sqrt{2\pi \var{}{Y_{v,j}}}} d \boldsymbol{y}.
            \end{equation}
            After \hspace{-0.2mm}multiplying \hspace{-0.2mm}by \hspace{-0.4mm}$N^{2\lambda^l}$\hspace{-1mm}, \hspace{-0.2mm}the \hspace{-0.2mm}$l$-th \hspace{-0.2mm}term \hspace{-0.2mm}of \hspace{-0.2mm}the \hspace{-0.2mm}sum \hspace{-0.2mm}in \hspace{-0.4mm}\eqref{eqn: H split.1}\hspace{-0.4mm} has the right decay if
            we show \hspace{-0.2mm}that \hspace{-0.2mm}the \hspace{-0.2mm}integral \hspace{-0.2mm}in \hspace{-0.4mm}\eqref{eqn:  multiple}\hspace{-0.4mm} is \hspace{-0.2mm}bounded \hspace{-0.2mm}by \hspace{-0.4mm}$\tilde{C} N^{-2\lambda^{l-1}}$\hspace{-1mm}.
            \hspace{-0.2mm}From \hspace{-0.2mm}\eqref{eq:borne.variance.upper}, \hspace{-0.2mm}we \hspace{-0.2mm}have
            \begin{equation*}
                0 < \var{}{Y_{v,j}}\leq \bar{\sigma}_j^2 \nabla \lambda^j \log N + C
            \end{equation*}
            for all $v\in V_N^o$. If the variances were all equal to $\bar{\sigma}_j^2 \nabla \lambda^j \log N + C$, the argument would be simpler.
            Extra work is needed to take care of the boundary effect of the GFF.
            We gather the result into a lemma for later use in the proof of Lemma \ref{lem:IGFF.high.points.upper.bound}.
        \end{proof}

        \begin{lemma}\label{lem: recursion}
            \hspace{-0.4mm}Let \hspace{0.2mm}$2 \hspace{-0.2mm}\leq l \hspace{-0.2mm}\leq m$ \hspace{-0.1mm}and \hspace{0.2mm}$\boldsymbol z \hspace{-0.5mm}\circeq \hspace{-0.5mm}(z_j)_{j=1}^{l-1}$ \hspace{-0.4mm}be such that $0 \hspace{-0.3mm}< \hspace{-0.3mm}z_j \hspace{-0.3mm}\leq \bar{\sigma}_j^2 \nabla \lambda^j \hspace{-0.3mm}\log N + \hspace{0.2mm}C$.
            For all $\tilde{\varepsilon} > 0$, consider the integral
            \begin{align*}
                I_{\tilde{\varepsilon}}(\boldsymbol{z})
                &\circeq \int_0^{\infty} g_1(y_1) \ ... \int_{-\sum_{j=1}^{l-3} y_j}^{\infty} \hspace{-1mm} g_{l-2}(y_{l-2}) \int_{-\sum_{j=1}^{l-2} y_j}^{\infty} \hspace{-2mm}e^{-2 a_l\sum_{j=1}^{l-1} y_j} g_{l-1}(y_{l-1}) ~d\boldsymbol{y}
            \end{align*}
            where $a_l> 1/\bar{\sigma}_{l-1}$ and
            \begin{equation*}
                g_j(y) \circeq \frac{1}{\sqrt{2\pi z_j}} \exp\left(-\frac{1}{2 z_j} \left((y - \tilde{\varepsilon} \log N^2) - \nabla L_N^{\star}(\lambda^j)\right)^2\right), \quad 1 \leq j \leq l-1\ .
            \end{equation*}
            Then $I_{\tilde{\varepsilon}}(\boldsymbol z) \leq \tilde{C} N^{-2\lambda^{l-1}}$.
        \end{lemma}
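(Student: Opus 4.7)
The plan is induction on $l$, integrating out $y_{l-1}, y_{l-2}, \ldots, y_1$ in turn from innermost outward; at each step I will extract a factor of order $N^{-2\nabla\lambda^j}$, so the telescoping product produces the desired $N^{-2\lambda^{l-1}}$.

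The workhorse is a one-dimensional Gaussian identity obtained by completing the square: for $s \geq 0$,
\begin{equation*}
    \int_{-s}^{\infty} g_j(y)\, e^{-2 a_l y}\, dy = e^{-2 a_l \mu_j + 2 a_l^2 z_j}\, \mathbb{P}\!\left( Z > \tfrac{-s - \tilde{\mu}_j}{\sqrt{z_j}} \right),
\end{equation*}
where $Z \sim N(0,1)$, $\mu_j \circeq 2(\tilde{\varepsilon} + \bar{\sigma}_j \nabla \lambda^j) \log N$ is the mean of $g_j$, and $\tilde{\mu}_j \circeq \mu_j - 2 a_l z_j$. Using $z_j \leq \bar{\sigma}_j^2 \nabla \lambda^j \log N + C$ the prefactor bounds as
\begin{equation*}
    e^{-2 a_l \mu_j + 2 a_l^2 z_j} \leq C\, N^{-4 a_l \tilde{\varepsilon}}\, N^{2 \nabla \lambda^j \left( (a_l \bar{\sigma}_j - 1)^2 - 1 \right)}.
\end{equation*}
Since $\bar\sigma$ is non-increasing and $a_l > 1/\bar\sigma_{l-1}$, one has $a_l \bar\sigma_j > 1$ for every $j \leq l-1$, so the squared defect $(a_l \bar\sigma_j - 1)^2$ is the small quantity to be controlled against $4 a_l \tilde\varepsilon$. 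Two regimes are treated: when $\tilde\mu_j \geq 0$ bound the tail probability by $1$ and verify $2\nabla\lambda^j(a_l\bar\sigma_j - 1)^2 \leq 4 a_l \tilde\varepsilon$ via the regime condition $\tilde\varepsilon \geq \bar\sigma_j\nabla\lambda^j(a_l\bar\sigma_j - 1)$ together with the trivial inequality $2 a_l \bar\sigma_j \geq a_l \bar\sigma_j - 1$; when $\tilde\mu_j < 0$ apply $\mathbb{P}(Z > x) \leq e^{-x^2/2}$ on $s \in [0, -\tilde\mu_j]$ and complete the square again to obtain
\begin{equation*}
    \int_{-s}^{\infty} g_j(y)\, e^{-2 a_l y}\, dy \leq \exp\!\left( -\tfrac{(\mu_j + s)^2}{2 z_j} + 2 a_l s\right),
\end{equation*}
which at $s = 0$ yields $\exp(-\mu_j^2/(2 z_j)) \leq C\, N^{-2 \nabla \lambda^j}$ from $\mu_j^2/(2 z_j) \geq 2\nabla\lambda^j\bigl(1 + \tilde\varepsilon/(\bar\sigma_j\nabla\lambda^j)\bigr)^2 \log N$.

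The inductive step then integrates out $y_{l-1}$ using this one-step estimate, extracting a factor $C\, N^{-2 \nabla \lambda^{l-1}}$ uniformly in $s = y_1 + \cdots + y_{l-2} \geq 0$; the remaining integral has the same form as the lemma with $l$ replaced by $l-1$, and the hypothesis propagates since $a_l > 1/\bar\sigma_{l-1} \geq 1/\bar\sigma_{l-2}$. The inductive hypothesis bounds the remainder by $\tilde C\, N^{-2 \lambda^{l-2}}$, and the product gives $\tilde C\, N^{-2\lambda^{l-1}}$; the base case $l = 2$ reduces to a single integral over $y_1 \geq 0$ handled by the same dichotomy. The main technical point, in both the base and the step, is making the one-step bound genuinely uniform in $s \geq 0$: the clean form $\exp(-(\mu_j + s)^2/(2 z_j) + 2 a_l s)$ is only available for $s \leq -\tilde\mu_j$, while for $s > -\tilde\mu_j$ one falls back on the coarser prefactor $e^{-2 a_l \mu_j + 2 a_l^2 z_j}$, and the strict inequality $a_l > 1/\bar\sigma_{l-1}$ is precisely what ensures this coarser bound remains at most $C\, N^{-2 \nabla \lambda^j}$ in every parameter regime.
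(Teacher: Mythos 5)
There is a genuine gap in your inductive step. The bound you need is that the innermost integral, $\int_{-s}^{\infty} g_{l-1}(y)\,e^{-2a_l y}\,dy \leq C\,N^{-2\nabla\lambda^{l-1}}$ \emph{uniformly in $s\geq 0$}, and this is false under the hypotheses of the lemma. The left-hand side is increasing in $s$ and tends, as $s\to\infty$, to the full tilted mass $e^{-2a_l\mu_{l-1}+2a_l^2 z_{l-1}}$; when $z_{l-1}$ is near its allowed maximum $\bar{\sigma}_{l-1}^2\nabla\lambda^{l-1}\log N$ this is, by your own computation, of order
\begin{equation*}
N^{-4a_l\tilde{\varepsilon}}\;N^{2\nabla\lambda^{l-1}\left((a_l\bar{\sigma}_{l-1}-1)^2-1\right)},
\end{equation*}
which is $\leq C N^{-2\nabla\lambda^{l-1}}$ only if $2\nabla\lambda^{l-1}(a_l\bar{\sigma}_{l-1}-1)^2\leq 4a_l\tilde{\varepsilon}$. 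The strict inequality $a_l>1/\bar{\sigma}_{l-1}$ does not ensure this ``in every parameter regime'': take $\bar{\sigma}_{l-1}=1$, $\nabla\lambda^{l-1}=1/2$, $a_l=3/2$, $z_{l-1}=\tfrac12\log N$, $\tilde{\varepsilon}=10^{-2}$, which gives roughly $N^{-0.81}$ against the required $N^{-1}$. This regime is not vacuous in the applications: in Lemma \ref{lem:IGFF.high.points.upper.bound} one has $a_l=c_\gamma/\bar{\sigma}_{l-1}$ with $c_\gamma$ possibly well above $1$, while $\tilde{\varepsilon}=\eta_\varepsilon$ is small. Relatedly, your treatment of the $\tilde{\mu}_j\geq 0$ branch is unjustified as written: $\tilde{\mu}_j\geq 0$ does not imply $\tilde{\varepsilon}\geq\bar{\sigma}_j\nabla\lambda^j(a_l\bar{\sigma}_j-1)$, since there is no lower bound on $z_j$ (that branch can be repaired by using $2a_l^2z_j\leq a_l\mu_j$ together with $a_l\bar{\sigma}_j>1$), but the $s>-\tilde{\mu}_j$ branch cannot, for the reason above. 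So the base case $l=2$ (where only $s=0$ occurs) survives a corrected argument, but the inductive step fails for $l\geq 3$.

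This is precisely the point the paper's proof handles with extra bookkeeping. Setting $\beta_{l-1}=\nabla L_N^{\star}(\lambda^{l-1})/(2z_{l-1})$, in the regime $a_l-\beta_{l-1}\geq 1/\sqrt{z_{l-1}}$ (large $z_{l-1}$) no constant-in-$s$ factor can be extracted; instead one keeps only the linear cross term of the Gaussian and obtains, after absorbing $e^{-2a_l\sum_{j\leq l-2}y_j}$, a bound $C\,e^{-2\beta_{l-1}\sum_{j\leq l-2}y_j}\,N^{-2\nabla\lambda^{l-1}}$, so the exponential tilt handed to the next integration degrades from $a_l$ to $a_{l-1}=\min\{a_l,\beta_{l-1}\}$. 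The recursion then closes only because $\bar{\sigma}$ strictly decreases at its jump points: $\beta_{l-1}\geq 1/\bar{\sigma}_{l-1}-O(1/\log N)>1/\bar{\sigma}_{l-2}$, so $a_{l-1}>1/\bar{\sigma}_{l-2}$, which is the (weaker) hypothesis needed at the next level. Your induction, which keeps the same $a_l$ and passes a constant $C\,N^{-2\nabla\lambda^{l-1}}$ to the level-$(l-1)$ integral, omits exactly this $s$-dependent tilt and the verification $a_{l-1}>1/\bar{\sigma}_{l-2}$, and as stated it cannot be carried out.
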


        \begin{proof}
            Let $\beta_j \circeq \frac{\nabla L_N^{\star}(\lambda^j)}{2 z_j}, ~1 \leq j \leq l-1$.
            When $a_l - \beta_{l-1} \geq 1 / \sqrt{z_{l-1}}$, the first integral with respect to $y_{l-1}$ in $I_{\tilde{\varepsilon}}(\boldsymbol{z})$ is equal to
            \begin{align*}
                & e^{-2 a_l \sum_{j=1}^{l-2} y_j} \int_{-\sum_{j=1}^{l-2} y_j}^{\infty} e^{-2 a_l y_{l-1}} \frac{1}{\sqrt{2\pi z_{l-1}}} e^{-\frac{1}{2 z_{l-1}} \left((y_{l-1} - \tilde{\varepsilon} \log N^2) - \nabla L_N^{\star}(\lambda^{l-1})\right)^2} dy_{l-1} \\
                &~~~~~\leq e^{-2 a_l \sum_{j=1}^{l-2} y_j} \frac{1}{\sqrt{z_{l-1}}} e^{-\frac{1}{2 z_{l-1}}(\nabla L_N^{\star}(\lambda^{l-1}))^2} \int_{-\sum_{j=1}^{l-2} y_j}^{\infty} e^{-2 (a_l - \beta_{l-1}) y_{l-1}} dy_{l-1} \\
                &~~~~~= e^{-2 a_l \sum_{j=1}^{l-2} y_j} \frac{1}{\sqrt{z_{l-1}}} e^{-\frac{1}{2 z_{l-1}}(\nabla L_N^{\star}(\lambda^{l-1}))^2} \frac{1}{2 (a_l - \beta_{l-1})} e^{2 (a_l - \beta_{l-1}) \sum_{j=1}^{l-2} y_j}.
            \end{align*}
            Since $z_{l-1} \leq \bar{\sigma}_{l-1}^2 \nabla \lambda^{l-1} \log N + C$ and $\nabla L_N^{\star}(\lambda^{l-1})=\bar{\sigma}_{l-1}\nabla \lambda^{l-1}\log N^2$, the above is smaller than
	        \begin{equation}\label{eqn: bound 1}
                C e^{-2 \beta_{l-1} \sum_{j=1}^{l-2} y_j} N^{-2 \nabla \lambda^{l-1}}.
	        \end{equation}
            When $a_l - \beta_{l-1} < 1 / \sqrt{z_{l-1}}$, we have by completing the square :
            \begin{align*}
                &e^{-2 a_l \sum_{j=1}^{l-2} y_j} \int_{-\sum_{j=1}^{l-2} y_j}^{\infty} \frac{e^{-2 a_l y_{l-1}}}{\sqrt{2\pi z_{l-1}}} \ e^{-\frac{\left((y_{l-1} - \tilde{\varepsilon} \log N^2) - \nabla L_N^{\star}(\lambda^{l-1})\right)^2}{2 z_{l-1}}} dy_{l-1} \\
                &~~\leq e^{-2 a_l \sum_{j=1}^{l-2} y_j} \int_{-\sum_{j=1}^{l-2} y_j}^{\infty} \frac{e^{-2 a_l (y_{l-1} - \tilde{\varepsilon} \log N^2)}}{\sqrt{2\pi z_{l-1}}} \ e^{-\frac{\left((y_{l-1} - \tilde{\varepsilon} \log N^2) - \nabla L_N^{\star}(\lambda^{l-1})\right)^2}{2 z_{l-1}}} dy_{l-1} \\
                &~~= e^{-2 a_l \sum_{j=1}^{l-2} y_j} e^{-2 a_l \nabla L_N^{\star}(\lambda^{l-1})} e^{2 a_l^2 z_{l-1}} \\
                &\quad\quad\quad\quad\quad\quad\quad \cdot \int_{-\sum_{j=1}^{l-2} y_j}^{\infty} \frac{1}{\sqrt{2\pi z_{l-1}}} e^{-\frac{\left((y_{l-1} - \tilde{\varepsilon} \log N^2) - \left(\nabla L_N^{\star}(\lambda^{l-1}) - 2 a_l z_{l-1}\right)\right)^2}{2 z_{l-1}}} dy_{l-1} \\
                &~~\leq \exp\Big(-2 a_l \sum_{j=1}^{l-2} y_j -2 a_l \nabla L_N^{\star}(\lambda^{l-1})+ 2 a_l^2 z_{l-1}\Big)\ .
            \end{align*}
            In the regime $a_l - \beta_{l-1} < 1 / \sqrt{z_{l-1}}$, note that $2 a_l^2 z_{l-1} < a_l \nabla L_N^{\star}(\lambda^{l-1}) + 2 a_l \sqrt{z_{l-1}}$. Since $z_{l-1} \leq \bar{\sigma}_{l-1}^2 \nabla \lambda^{l-1} \log N + C$, the above is smaller than
            \begin{equation*}
                \exp\Big(-2 a_l \sum_{j=1}^{l-2} y_j - a_l \nabla L_N^{\star}(\lambda^{l-1})+C \sqrt{\log N}\Big)\ .
            \end{equation*}
            By assumption, $a_l > 1/\bar{\sigma}_{l-1}$. Therefore, the above is smaller than
            \begin{equation}\label{eqn: bound 2}
                e^{-2 a_l \sum_{j=1}^{l-2} y_j} N^{-2 \nabla \lambda^{l-1}}\ .
            \end{equation}
            The second integral with respect to $y_{l-2}$ in $I_{\tilde{\varepsilon}}(\boldsymbol z)$ is evaluated similarly using \eqref{eqn: bound 1} and \eqref{eqn: bound 2} by taking
            \begin{equation*}
                a_{l-1} \circeq \min\{a_l,\beta_{l-1}\}
            \end{equation*}
            and considering whether $a_{l-1} - \beta_{l-2} \geq 1 / \sqrt{z_{l-2}}$ or not.
            Note that $a_{l-1} > 1 / \bar{\sigma}_{l-2}$ holds since $a_l > 1 / \bar\sigma_{l-1} > 1 / \bar\sigma_{l-2}$ (because the steps of $\bar{\sigma}$ are decreasing in height) and $\beta_{l-1}\geq 1 / \bar{\sigma}_{l-1} - \hspace{0.2mm}O((\log N)^{-1}) > 1 / \bar{\sigma}_{l-2}$ from the bounds on $z_{l-1}$.
            This recursive reasoning shows that $I_{\tilde{\varepsilon}}(\boldsymbol z)$ is smaller than $\tilde{C} N^{-2\lambda^{l-1}}$.
        \end{proof}

        \begin{lemma}[Lower \hspace{-0.3mm}bound on \hspace{-0.3mm}the \hspace{-0.3mm}first \hspace{-0.3mm}order \hspace{-0.3mm}of \hspace{-0.3mm}the \hspace{-0.3mm}maximum]
        \label{lem:IGFF.order1.lower.bound}
            \hspace{-1mm}Let $\{\psi_v\}_{v\in V_N}$ \hspace{-1mm}be the $(\boldsymbol{\sigma},\boldsymbol{\lambda})$-GFF on $V_N$ of Definition \ref{def:IGFF} and $\gamma^{\star}$ as in Theorem \ref{thm:IGFF.order1}.
            For all $0 < \varepsilon  < 1$, there exists a constant $c = c(\varepsilon,\boldsymbol{\sigma},\boldsymbol{\lambda}) > 0$ such that
            \begin{equation}\label{eq:IGFF.order1.lower.bound}
                \prob{}{\max_{v\in V_N} \psi_v \leq (1-\varepsilon) \gamma^{\star} \log N^2} \leq N^{-c}
            \end{equation}
            for $N$ large enough.
        \end{lemma}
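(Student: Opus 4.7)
The plan is a multiscale-truncated second moment method combined with the Borell--TIS inequality to promote ``positive probability'' to probability $1-N^{-c}$. Fix a small $\delta>0$ and $\eta>0$ with $m\eta<\varepsilon\gamma^\star/2$, and introduce the target set of sites tracking the optimal path within thin windows at every scale:
\begin{equation*}
\Lambda_N \circeq \Big\{v\in V_N^\delta : \psi_v(\lambda^l)-L_N^\star(\lambda^l)\in [-l\eta\log N^2,\,-(l-1)\eta\log N^2]\text{ for all } 1\leq l\leq m\Big\}.
\end{equation*}
By construction, any $v\in\Lambda_N$ satisfies $\psi_v\geq(\gamma^\star-m\eta)\log N^2>(1-\varepsilon/2)\gamma^\star\log N^2$, so producing one such site would already beat the threshold $(1-\varepsilon/2)\gamma^\star\log N^2$.

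For the first moment, the increments $\nabla\psi_v(\lambda^l)$ are independent centered Gaussians of variance $\bar\sigma_l^2\nabla\lambda^l\log N+O(1)$ by \eqref{eq:borne.variance.delta} together with the identity $\mathcal{J}_{\sigma^2}(\lambda^{l-1},\lambda^l)=\bar\sigma_l^2\nabla\lambda^l$. A Gaussian density estimate gives, at each scale $l$, a window probability of order $\eta\sqrt{\log N}\cdot N^{-2\nabla\lambda^l}$; multiplying over $l$ and summing over the $\asymp N^2$ sites in $V_N^\delta$ yields $\esp{}{|\Lambda_N|}\asymp(\eta\sqrt{\log N})^m$.

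The main obstacle is the matching second moment estimate. I would split the double sum over pairs $(v,v')$ by branching scale $\rho(v,v')$. Using the Markov property (Lemma \ref{lem:psi.Markov}) together with $\var{}{\psi_v(\lambda)-\psi_{v'}(\lambda)}=O(1)$ for $\lambda\leq\rho(v,v')$, the joint event factors, up to a harmless constant, into a shared prefix up to scale $\rho$ and two independent suffixes. For pairs with $\rho\in(\lambda^{k-1},\lambda^k]$ the number of eligible $v'$ per $v$ is $\lesssim N^{2(1-\lambda^{k-1})}$, the prefix has probability $\asymp(\eta\sqrt{\log N})^{k-1}N^{-2\lambda^{k-1}}$, and each of the two suffixes has probability $\asymp(\eta\sqrt{\log N})^{m-k+1}N^{-2(1-\lambda^{k-1})}$. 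The crucial role of the \emph{upper} cap built into $\Lambda_N$ is precisely to prevent prefix overshoots: without pinning $\psi_v(\lambda^l)$ from above at every intermediate scale, a prefix overshoot would cheapen both independent suffixes, and in the scale-inhomogeneous regime (where $\sigma_i^2$ may exceed $\bar\sigma^2$ on intervals below the concave hull) this would inflate the second moment past any constant multiple of $\esp{}{|\Lambda_N|}^2$. The recursion closing the estimate is analogous to the one in Lemma \ref{lem: recursion} and yields $\esp{}{|\Lambda_N|^2}\leq C\,\esp{}{|\Lambda_N|}^2$. Paley--Zygmund then gives $\prob{}{|\Lambda_N|\geq 1}\geq c_0>0$.

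To upgrade this positive probability bound to $1-N^{-c}$, I would invoke the Borell--TIS inequality. Setting $M\circeq\max_{v\in V_N}\psi_v$ and $\sigma_{\max}^2\circeq\max_v\var{}{\psi_v}\leq C\log N$ by \eqref{eq:borne.variance.upper}, one has $\prob{}{|M-\esp{}{M}|>t}\leq 2e^{-t^2/(2\sigma_{\max}^2)}$. Combined with the Paley--Zygmund bound $\prob{}{M\geq(1-\varepsilon/2)\gamma^\star\log N^2}\geq c_0$, this forces $\esp{}{M}\geq(1-\varepsilon/2)\gamma^\star\log N^2 - O(\sqrt{\log N})$. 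A second Borell--TIS application at $t=(\varepsilon/4)\gamma^\star\log N^2$ then delivers $\prob{}{M\leq(1-\varepsilon)\gamma^\star\log N^2}\leq 2e^{-c\log N}=N^{-c}$, which is \eqref{eq:IGFF.order1.lower.bound}.
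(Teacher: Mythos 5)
Your overall architecture (multiscale truncated first/second moment, Paley--Zygmund, then a concentration upgrade) is reasonable, but the central step is not established and, as you have set it up, is false. Your event $\Lambda_N$ constrains the field only at the $m$ scales $\lambda^1<\dots<\lambda^m$ where $\bar{\sigma}$ jumps, and it pins the field essentially \emph{at} the optimal path $L_N^{\star}$. Inside each interval $(\lambda^{l-1},\lambda^l)$ the climb from $L_N^{\star}(\lambda^{l-1})$ to $L_N^{\star}(\lambda^l)$ is a critical, maximum-level problem for an effectively homogeneous field of variance $\bar{\sigma}_l^2$ per unit of scale, and you impose no condition at the intermediate scales, so pairs whose branching scale $\rho(v,v')$ lies strictly inside an interval dominate the second moment. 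The homogeneous case $m=1$ already kills the argument: there your event is just $\psi_v\in[(\gamma^\star-\eta)\log N^2,\ \gamma^\star\log N^2]$ with no intermediate constraint at all; for pairs with $\rho\approx 1/2$ the conditionally optimal shared value at scale $\rho$ is about $\tfrac{4\rho}{1+\rho}\log N$, which exceeds the path value $2\rho\log N$ (an overshoot your endpoint caps cannot forbid), and the contribution of such pairs is of order $N^{\,2+2(1-\rho)-\frac{4}{1+\rho}+O(\eta)}=N^{\,1/3+O(\eta)}$ at $\rho=1/2$, while $(\esp{}{|\Lambda_N|})^2=N^{O(\eta)}$. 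So $\esp{}{|\Lambda_N|^2}\leq C\,(\esp{}{|\Lambda_N|})^2$ fails and Paley--Zygmund yields nothing. Your factorization ``shared prefix at the path value $\times$ two independent suffixes'' assumes exactly what fails, namely that the field sits on the path at the branching scale; Lemma \ref{lem: recursion} treats a different (upper-bound) integral and does not repair this. A minor symptom of the same issue: $\esp{}{|\Lambda_N|}$ is not $\asymp(\eta\sqrt{\log N})^m$, since across a window of width $\eta\log N^2$ the Gaussian density varies by a factor $N^{\Theta(\eta)}$, so the first moment is $N^{\Theta(\eta)}$.

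The paper sidesteps both problems: it follows the strictly sub-optimal path $(1-\varepsilon)L_N^{\star}$ and imposes the increment conditions on a fine grid $\alpha_j=j/K$ with $K=K(\varepsilon)$ large (dropping the first $r$ increments for independence), using the variance control \eqref{eq:borne.variance.delta} on $V_N^{\delta}$. With that energy gap and fine-grid control, the correlated pairs contribute only $O(N^{-c})(\esp{}{\mathcal{N}_\varepsilon^\star})^2$, so Paley--Zygmund alone gives $1-O(N^{-c})$ and no concentration inequality is needed. Your Borell--TIS upgrade (constant probability of exceedance $\Rightarrow$ lower bound on the mean of the maximum up to $O(\sqrt{\log N})$ $\Rightarrow$ polynomial decay at a level $\Theta(\varepsilon\log N)$ lower) is correct in itself and would be a legitimate alternative ending, but only after a genuine truncated second-moment bound is in place — for instance by imposing your windows, or at least one-sided barrier conditions, at every scale of a fine grid rather than only at $\lambda^1,\dots,\lambda^m$, or by switching to the sub-optimal path as the paper does.
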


        Without loss of generality, we can assume that $\lambda_i\in \Q$ for all $i\in \{0,...,M\}$.
        To see this, define $\tilde{\lambda}_i \circeq \lambda_i + \eta_i$ where $0 < \eta_i < \min_i \nabla \lambda_i$ and such that $\tilde{\lambda}_i\in \Q$ for all $i\in \{1,...,M-1\}$. Now, define a new scale-inhomogeneous Gaussian free field :
        \begin{equation*}
            \tilde{\psi}_v \circeq \sum_{i=1}^M \sigma_i \nabla \phi_v(\tilde{\lambda}_i) = \psi_v + \sum_{i=1}^{M-1} (\sigma_i - \sigma_{i+1}) (\phi_v(\tilde{\lambda}_i) - \phi_v(\lambda_i))\ .
        \end{equation*}
        As a particular case of Lemma \ref{lem:tech.lemma.2}, note that
        \begin{equation*}
            \max_{v\in V_N} \var{}{\phi_v(\tilde{\lambda}_i) - \phi_v(\lambda_i)} \leq (\tilde{\lambda}_i - \lambda_i) \log N + C = \eta_i \log N + C\ .
        \end{equation*}
        If we can show Lemma \ref{lem:IGFF.order1.lower.bound} when the $\lambda_i$'s are rational numbers, then a union bound and a Gaussian estimate yield
        \begin{align*}
            &\prob{}{\max_{v\in V_N} \psi_v \leq (1-2\varepsilon) \gamma^{\star} \log N^2}
            \leq \prob{}{\max_{v\in V_N} \tilde{\psi}_v \leq (1-\varepsilon) \gamma^{\star} \log N^2} \\
            &\quad\quad\quad\quad\quad+ \sum_{v\in V_N^o} \sum_{i=1}^{M-1} \prob{}{|\sigma_i - \sigma_{i+1}| \left|\phi_v(\tilde{\lambda}_i) - \phi_v(\lambda_i)\right| \geq (\varepsilon/(M-1)) \gamma^{\star} \log N^2} \\
            &\quad\leq N^{-c(\varepsilon,\boldsymbol{\sigma},\boldsymbol{\lambda})} + N^2 (M-1) \exp\Big(-\frac{\big((\varepsilon/(M-1)) \gamma^{\star} \log N^2\big)^2}{2 \max_i |\sigma_i - \sigma_{i+1}|^2 (\eta_i \log N + C)}\Big)\ .
        \end{align*}
        The second term can be made $O(N^{-\tilde{c}(\varepsilon,\boldsymbol{\sigma},\boldsymbol{\lambda})})$ where $\tilde{c} > 0$ is arbitrarily large, by choosing the $\eta_i$'s small enough with respect to $\varepsilon$.

        The proof of Lemma \ref{lem:IGFF.order1.lower.bound} is based on a coarse-graining of the scales introduced in \citet{MR3380419}.
        Consider $\alpha_k \circeq \frac{k}{K}, ~ 0\leq k \leq K$.
        The parameter $K\in \N$ will be chosen large enough depending on $\varepsilon$ during the proof.
        By the argument above, we can assume that $\lambda_i K\in \N_0$ for all $i\in \{0,...,M\}$, so that the $\alpha_k$'s form a finer partition of $[0,1]$ than the $\lambda_i$'s.
        The bounds in \eqref{eq:borne.variance.delta} imply that for all $k\in \{1,..., K\}$ and for all $v\in V_N^\delta$ :
        \begin{equation}\label{eqn: variance increments}
            |\var{}{\nabla\psi_v(\alpha_k)} - \sigma^2(\alpha_k)\nabla \alpha_k  \log N| \leq C(\delta)\ .
        \end{equation}
        The parameter $\delta\in (0,1/2)$ remains fixed to an arbitrary value in the rest of this section. For all $0 < \varepsilon < 1$, denote by $L_{N,\varepsilon}^{\star}$ the following sub-optimal path :
        \begin{equation*}
            L_{N,\varepsilon}^{\star}(s) = (1 - \varepsilon) L_N^{\star}(s) = (1-\varepsilon) \mathcal{J}_{\sigma^2 / \bar{\sigma}}(s) \log N^2, \ \ \ s\in [0,1]\ .
        \end{equation*}

        The proof relies on the Paley-Zygmund inequality (see Lemma \ref{lem:paley.zygmund}) applied to a {\it modified number of exceedances}.
        In fact, we consider only points in $V_N^\delta$ whose increments are almost optimal.
        Moreover, and crucially, we drop the first $r$ increments.
        We will choose $r$ during the proof.
        This allows more independence between the variables of the field, which is needed to find a tight lower bound using the Paley-Zygmund inequality.
        More precisely, define
        \begin{equation*}
             \mathcal{N}_\varepsilon^\star \circeq \sum_{v\in V_N^\delta} 1_{A_v} \ \ \text{where}\ \ A_v \circeq \{\nabla \psi_v(\alpha_j) \geq \nabla L_{N,\varepsilon}^{\star}(\alpha_j) \ \ \forall j\in \{r+1,...,K\}\}\ .
        \end{equation*}
        For a fixed $\varepsilon > 0$, there is the following inequality for $c = c(\varepsilon) > 0$ :
        \begin{equation}\label{eq:sufficient.lower.max}
            \prob{}{\max_{v\in V_N} \psi_v\geq (1 - 3\varepsilon)\gamma^\star \log N^2}
            \geq \prob{}{\mathcal{N}_\varepsilon^\star\geq 1} - O(N^{-c})\ .
        \end{equation}
        Indeed, on the event $\{\mathcal{N}_\varepsilon^\star\geq 1\}$, we have
        \begin{equation*}
            \begin{aligned}
                \max_{v\in V_N^\delta} \psi_v - \psi_v(\alpha_r)
                &\geq (1-\varepsilon) \mathcal{J}_{\sigma^2 / \bar{\sigma}}(\alpha_r,1) \log N^2 \\
                &= \vspace{-10mm}(1-\varepsilon)\gamma^\star\log N^2- (1-\varepsilon) \mathcal{J}_{\sigma^2 / \bar{\sigma}}(\alpha_r) \log N^2 \\
                &\geq (1 - 2\varepsilon)\gamma^\star\log N^2
            \end{aligned}
        \end{equation*}
        where we take $K$ large enough that $(1 - \varepsilon) \mathcal{J}_{\sigma^2 / \bar{\sigma}}(\alpha_r) < \varepsilon \gamma^{\star}$.
        Furthermore, the probability $\mathbb P(\max_{v\in V^\delta_N} \psi_v - \psi_v(\alpha_r) \geq (1 - 2\varepsilon) \gamma^\star \log N^2)$ is equal to
        \begin{equation*}
            \begin{aligned}
                &\prob{}{\max_{v\in V_N^{\delta}} \psi_v-\psi_v(\alpha_r)\geq (1 - 2\varepsilon)\gamma^\star \log N^2, \min_{v\in V_N^\delta} \psi_v(\alpha_r) > -\varepsilon \gamma^\star \log N^2} \\
                &~+ \prob{}{\max_{v\in V_N^{\delta}} \psi_v-\psi_v(\alpha_r)\geq(1 - 2\varepsilon)\gamma^\star \log N^2, \min_{v\in V_N^\delta} \psi_v(\alpha_r) \leq -\varepsilon \gamma^\star \log N^2} .
            \end{aligned}
        \end{equation*}
        The distribution of $\psi_v(\alpha_r)$ is symmetric, so the second term is smaller than
        \begin{equation}\label{eq:order1.lower.bound.gaussian.before.proof}
            \mathbb{P}\left(\max_{v\in V_N^\delta} \psi_v(\alpha_r) \geq \varepsilon \gamma^\star \log N^2\right)
            \leq N^2 \exp\left( -\frac{(\varepsilon \gamma^{\star})^2 \log N^2}{\max_i \sigma_i^2 \alpha_r}\right)
        \end{equation}
        where we used a union bound, a Gaussian estimate and \eqref{eq:borne.variance.upper} to get the inequality. This is $O(N^{-c})$ by choosing $K$ large enough for a fixed $\varepsilon$ and $r$. On the other hand, the first term is smaller than $\mathbb P(\max_{v\in V_N^\delta} \psi_v \geq (1 - 3\varepsilon)\gamma^\star \log N^2)$.
        Since $V_N \supseteq V_N^\delta$, this implies \eqref{eq:sufficient.lower.max} as claimed.

        \begin{proof}[Proof of Lemma \ref{lem:IGFF.order1.lower.bound}]
            In view of \eqref{eq:sufficient.lower.max}, it suffices to show $\prob{}{\mathcal{N}_\varepsilon^\star\geq 1} = 1 - O(N^{-c})$.
            The Paley-Zygmund inequality implies
            \begin{equation*}
                \prob{}{\mathcal{N}_\varepsilon^\star \geq 1}
                \geq \frac{(\esp{}{\mathcal{N}_\varepsilon^\star})^2}{\esp{}{(\mathcal{N}_\varepsilon^\star)^2}}\ .
            \end{equation*}
            We show
            \begin{equation}\label{eqn: lower max to show}
                \esp{}{(\mathcal{N}_\varepsilon^\star)^2} \leq (1 + O(N^{-\frac{1}{2K} (1 - (1 - \varepsilon)^2})) \ (\esp{}{\mathcal{N}_\varepsilon^\star})^2,
            \end{equation}
            which proves the claim.

            \vspace{5mm}
            The first moment is easily evaluated by the independence of the increments :
            \begin{equation*}
                \esp{}{\mathcal{N}_\varepsilon^\star} = \sum_{v\in V_N^\delta}\PP(A_v)
                =\sum_{v\in V_N^\delta}\prod_{j=r+1}^K \prob{}{\nabla \psi_v(\alpha_j)\geq \nabla L_{N,\varepsilon}^{\star}(\alpha_j)}.
            \end{equation*}
            Using Gaussian estimates and the variance estimates in \eqref{eqn: variance increments}, the probabilities are for every $j$ and $v\in V_N^\delta$ :
            \begin{align}\label{eqn: pj}
                p_{v,j}
                &\circeq \prob{}{\nabla \psi_v(\alpha_j)\geq \nabla L_{N,\varepsilon}^{\star}(\alpha_j)} \notag \\
                &\asymp \frac{1}{\sqrt{\log N}} \exp\left(-(1-\varepsilon)^2 \frac{\sigma^2(\alpha_j)}{\bar\sigma^2(\alpha_j)}\nabla\alpha_j \log N^2\right).
            \end{align}
            Write $e_j$ for the exponential term on the right-hand side of \eqref{eqn: pj}.
            The first moment satisfies
            \vspace{-1mm}
            \begin{equation}\label{eqn: first moment}
                \esp{}{\mathcal{N}_\varepsilon^\star}=\sum_{v\in V_N^{\delta}} \prob{}{A_v} \geq \frac{c(\varepsilon,\delta)}{(\log N)^{\frac{1}{2} (K - r)}} \times |V_N^\delta| \times \prod_{j=r+1}^K e_j\ .
            \end{equation}
            Now, we compare this with the second moment :
            \begin{equation*}
                \esp{}{(\mathcal{N}_\varepsilon^\star)^2} = \sum_{v,v'\in V_N^\delta}\PP(A_v\cap A_{v'})\ .
            \end{equation*}
            We divide the sum depending on the correlations between $\psi_v$ and $\psi_{v'}$.
            More precisely, recall the definition of the branching scale in \eqref{eq:rho} :
            \begin{equation*}
                \rho(v,v') \circeq \max\{\lambda\in [0,1] : [v]_\lambda \cap [v']_\lambda \neq \emptyset\}, \quad v,v'\in V_N\ .
            \end{equation*}
            Write the second moment as
            \begin{equation}\label{eqn: sum split}
                \sum_{\substack{v,v'\in V_N^{\delta} \\ \rho(v,v') < \alpha_r}} \hspace{-3mm} \prob{}{A_v\cap A_{v'}}
                ~~+ \hspace{0mm} \sum_{k=r+1}^{K-1} \hspace{-5mm} \sum_{\substack{v,v'\in V_N^{\delta} \\ \alpha_{k-1} \leq \rho(v,v') < \alpha_k}} \hspace{-7.5mm} \prob{}{A_v\cap A_{v'}}
                ~~+ \hspace{-5mm} \sum_{\substack{v,v'\in V_N^{\delta} \\ \rho(v,v') \geq \alpha_{K-1}}} \hspace{-5mm} \prob{}{A_v\cap A_{v'}}.
            \end{equation}
            In particular, the first term in \eqref{eqn: sum split} is equal to
            \begin{equation}\label{eq:lem:order1.lower.bound.first.term.bound}
                \sum_{\substack{v,v'\in V_N^{\delta} \\ \rho(v,v') < \alpha_r}} \hspace{-2mm}\prob{}{A_v} \prob{}{A_{v'}} \leq \hspace{-1mm}\sum_{v,v'\in V_N^{\delta}} \prob{}{A_v} \prob{}{A_{v'}} = (\esp{}{\mathcal{N}_\varepsilon^\star})^2.
            \end{equation}

            It remains to show that the second and third term in \eqref{eqn: sum split} are negligible compared to $(\esp{}{\mathcal{N}_\varepsilon^\star})^2$.
            We write the details for the second term since the last term is done similarly and is easier.
            By Lemma \ref{lem:psi.Markov} (following the Markov property of the GFF), note that if $\alpha_{k-1}\leq \rho(v,v')< \alpha_k$ for some $k\geq r+1$, then $\nabla \psi_{v'}(\alpha_{j'})$, $j'\geq k+1$, is independent of $\nabla \psi_{v}(\alpha_{j})$ for $j\leq k-2$ and $j\geq k+1$.
            Therefore, for $v,v'\in V_N^{\delta}$ such that $\alpha_{k-1}\leq \rho(v,v')< \alpha_k$, we have
            \begin{equation*}
                \prob{}{A_v\cap A_{v'}} \leq  \prod_{j=r+1}^{k-2} p_{v,j} \prod_{j=k+1}^{K} p_{v,j} p_{v',j} \leq \left(\prod_{j=r+1}^K e_j^2\right) \left(\frac{\prod_{j=1}^r e_j}{e_{k-1} e_k^2}\right) \left(\prod_{j=1}^{k-1} e_j\right)^{-1}
            \end{equation*}
            where we dropped the conditions on $j\in \{k-1,k\}$ for $v$ as well as the conditions on $j\leq k$ for $v'$ in the first inequality.
            We simply rearranged the probabilities and eliminated the log terms to get the last inequality. The number of pairs $v,v'\in V_N^{\delta}$ such that $\alpha_{k-1}\leq \rho(v,v') < \alpha_k$ is at most $|V_N^\delta| \times N^{2(1-\alpha_{k-1})}$. Therefore, by \eqref{eqn: first moment},
            \begin{equation}\label{eq:sum.split.second.term}
                \begin{aligned}
                    \sum_{\substack{v,v'\in V_N^{\delta} \\ \alpha_{k-1} \leq \rho(v,v') < \alpha_k}} \hspace{-7mm} \prob{}{A_v\cap A_{v'}}
                    &\leq \frac{(\esp{}{\mathcal{N}_\varepsilon^\star})^2}{(\log N)^{-K}} \times N^{-2\alpha_{k-1} (1 - (1 - \varepsilon)^2)} \left(\frac{\prod_{j=1}^r e_j}{e_{k-1} e_k^2}\right) \\
                    &\phantom{\times \frac{N^{-2\alpha_{k-1} (1 - \varepsilon)^2}}{\prod_{j=1}^{k-1} e_j}}
                \end{aligned}
            \end{equation}
            \vspace{-18mm}
            \begin{equation*}
                \quad\quad\quad\quad\quad\quad\quad\quad\quad\quad\times \frac{N^{-2\alpha_{k-1} (1 - \varepsilon)^2}}{\prod_{j=1}^{k-1} e_j}
            \end{equation*}
            The \hspace{-0.2mm}right-hand \hspace{-0.2mm}side \hspace{-0.2mm}of \hspace{-0.6mm}\eqref{eq:sum.split.second.term}\hspace{-0.6mm} is \hspace{-0.2mm}separated \hspace{-0.2mm}in \hspace{-0.2mm}three \hspace{-0.2mm}factors \hspace{-0.2mm}by \hspace{-0.4mm}$\times$.
            \hspace{-1mm}The \hspace{-0.2mm}third \hspace{-0.2mm}factor is bounded by $1$ because
            \begin{equation*}
                \int_{0}^t \frac{\sigma^2(s)}{\bar\sigma^2(s)}ds \leq t, \ \ \ t\in (0,1],\vspace{1mm}
            \end{equation*}
            by definition of $\bar{\sigma}$.
            To bound the second factor, set $r \geq 3$ independently of any other variable.
            Note that if $r$ depended on $K$, the bound in \eqref{eq:order1.lower.bound.gaussian.before.proof} would not necessarily tend to $0$.
            There are two cases to consider : $\alpha_k \leq \lambda_1$ and $\alpha_k > \lambda_1$.
            When $\alpha_k \leq \lambda_1$, the ratio of exponentials is bounded by $1$ because $e_1 e_2 e_3 = e_{k-1} e_k^2$ and we have $N^{-2\alpha_{k-1} (1 - (1 - \varepsilon)^2)} \leq N^{-\frac{1}{K} (1 - (1 - \varepsilon)^2)}$ since $\alpha_{k-1} \geq \alpha_r \geq 1 / (2K)$.
            When $\alpha_k > \lambda_1$, the ratio of exponentials is bounded by $N^{\lambda_1 (1 - (1 - \varepsilon)^2)}$ by choosing $K$ large enough for a fixed $\varepsilon$ and we have $N^{-2\alpha_{k-1} (1 - (1 - \varepsilon)^2)} \leq N^{-2\lambda_1 (1 - (1 - \varepsilon)^2)}$ because $\alpha_{k-1} \geq \lambda_1$. Since $\lambda_1 \geq 1/K$, the right-hand side of \eqref{eq:sum.split.second.term} is always bounded by
            \begin{equation*}
                (\esp{}{\mathcal{N}_\varepsilon^\star})^2 \ (\log N)^K \times N^{-\frac{1}{K} (1 - (1 - \varepsilon)^2)} \notag\ .
            \end{equation*}
            With \eqref{eq:lem:order1.lower.bound.first.term.bound}, this shows \eqref{eqn: lower max to show} and concludes the proof of the lemma.
        \end{proof}

    \subsection{Log-number of high points}

        The proof of the upper bound for the log-number of high-points uses an argument based on the path at every scale $\lambda^l$ similar to the one in Lemma \ref{lem:IGFF.order1.upper.bound}.
        Recall the definition of the critical levels $\gamma^l$ and the entropy $\mathcal E_\gamma$ in Theorem \ref{thm:IGFF.high.points}.

        \begin{lemma}[Upper bound on the log-number of high points]\label{lem:IGFF.high.points.upper.bound}
            Let $\{\psi_v\}_{v\in V_N}$ be the $(\boldsymbol{\sigma},\boldsymbol{\lambda})$-GFF on $V_N$ of Definition \ref{def:IGFF} and $\gamma^{\star}$ as defined in Theorem \ref{thm:IGFF.order1}. Also, let $\gamma^{l-1} < \gamma \leq \gamma^{l}$ for some $l\in \{1,...,m\}$.
            For all $0 < \varepsilon < (\gamma - \gamma^{l-1})/m$, there exists a constant $c = c(\gamma,\varepsilon,\boldsymbol{\sigma},\boldsymbol{\lambda}) > 0$ such that
            \begin{equation}\label{eq:IGFF.high.points.upper.bound}
                \prob{}{|\mathcal{H}_N^{\gamma}| \geq N^{2\mathcal{E}_{\gamma} + \varepsilon}} \leq N^{-c}
            \end{equation}
            for $N$ large enough.
        \end{lemma}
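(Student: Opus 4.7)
The plan is to use a truncated first moment method patterned on the proof of Lemma \ref{lem:IGFF.order1.upper.bound}: count only those $\gamma$-high points whose field values at scales $\lambda^j$, $1\leq j\leq l-1$, stay close to the optimal-maximum path $L_N^\star$, then apply Markov's inequality. The key observation is that the optimal path $L_N^\gamma$ for $\gamma$-high points coincides with $L_N^\star$ on $[0,\lambda^{l-1}]$ by \eqref{eqn: gamma optimal}, so the truncation does not discard typical $\gamma$-high points, while the independence of the scale increments delivers the exponent $2\mathcal{E}_\gamma$. The case $l=1$ has no intermediate scales to truncate at and follows from a direct Gaussian estimate on $\psi_v$, so I may focus on $l\geq 2$.

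For a small $\varepsilon'>0$ to be chosen, define
\[
\widetilde{\mathcal{H}}_N^\gamma \circeq \bigl\{v\in V_N : \psi_v\geq\gamma\log N^2 \text{ and } \psi_{v_{\lambda^j}}(\lambda^j)<L_N^\star(\lambda^j)+j\varepsilon'\log N^2 \text{ for all } 1\leq j\leq l-1\bigr\},
\]
and split
\[
\prob{}{|\mathcal{H}_N^\gamma|\geq N^{2\mathcal{E}_\gamma+\varepsilon}} \leq \prob{}{\mathcal{H}_N^\gamma\neq\widetilde{\mathcal{H}}_N^\gamma} + \prob{}{|\widetilde{\mathcal{H}}_N^\gamma|\geq N^{2\mathcal{E}_\gamma+\varepsilon}}.
\]
The first term is $O(N^{-c})$ by reproducing the argument of Lemma \ref{lem:IGFF.order1.upper.bound}: on the branching approximation event $A_{\varepsilon'}$, a discrepancy $\mathcal{H}_N^\gamma\neq\widetilde{\mathcal{H}}_N^\gamma$ forces some representative at an intermediate scale $\lambda^j$ with $j\leq l-1<m$ to exceed $L_N^\star(\lambda^j)+j\varepsilon'\log N^2$, and this is precisely the event already controlled in \eqref{eqn: H split.1} by the recursive bound of Lemma \ref{lem: recursion}.

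For the second term I would apply Markov's inequality after bounding the truncated first moment
\[
\esp{}{|\widetilde{\mathcal{H}}_N^\gamma|} \leq \sum_{v\in V_N} \prob{}{\psi_v\geq\gamma\log N^2,\ \psi_v(\lambda^j)<L_N^\star(\lambda^j)+j\varepsilon'\log N^2\ \forall j\leq l-1}.
\]
By independence of the scale increments (Lemma \ref{lem:psi.Markov}), I condition on $(\psi_v(\lambda^1),\ldots,\psi_v(\lambda^{l-1}))=(x_1,\ldots,x_{l-1})$ and estimate the last-scale probability $\mathbb{P}(\psi_v-\psi_v(\lambda^{l-1})\geq\gamma\log N^2-x_{l-1})$ by a Gaussian tail using the variance bound \eqref{eq:borne.variance.upper}. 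This produces a factor $N^{-2(\gamma-\mathcal{J}_{\sigma^2/\bar{\sigma}}(\lambda^{l-1}))^2/\mathcal{J}_{\sigma^2}(\lambda^{l-1},1)}$ together with an exponential in $x_{l-1}$ of rate $a_l=(\gamma-\mathcal{J}_{\sigma^2/\bar{\sigma}}(\lambda^{l-1}))/\mathcal{J}_{\sigma^2}(\lambda^{l-1},1)$. The hypothesis $\gamma>\gamma^{l-1}$ is exactly equivalent to $a_l>1/\bar{\sigma}_{l-1}$, so the residual $(l-1)$-fold integral over the intermediate scales has precisely the form handled by Lemma \ref{lem: recursion}. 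Combined with $|V_N|\asymp N^2$ and the identity $\mathcal{J}_{\sigma^2}(\lambda^{j-1},\lambda^j)=\bar{\sigma}_j^2\nabla\lambda^j$ (which holds since $\hat{\mathcal{J}}_{\sigma^2}$ and $\mathcal{J}_{\sigma^2}$ agree at each $\lambda^j$), this yields $\esp{}{|\widetilde{\mathcal{H}}_N^\gamma|}\leq CN^{2\mathcal{E}_\gamma+C'\varepsilon'}$, and choosing $\varepsilon'<\varepsilon/(2C')$ in Markov's inequality finishes the second term.

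The principal technical point is the identification of the correct last-scale Gaussian estimate so that the resulting integrand satisfies the hypothesis $a_l>1/\bar{\sigma}_{l-1}$ of Lemma \ref{lem: recursion}. This equivalence between $\gamma>\gamma^{l-1}$ and $a_l>1/\bar{\sigma}_{l-1}$ is exactly why the critical levels \eqref{eq:critic.levels} are defined as they are: for $\gamma\geq\gamma^l$ the optimal path $L_N^\gamma$ would kink at $\lambda^l$ and a longer truncation would be required, but that regime lies outside the scope of the statement. Once this check is made, the recursive estimate propagates the bound backwards through the intermediate scales without further modification.
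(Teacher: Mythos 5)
Your proposal is correct and follows essentially the same route as the paper: truncate the count of $\gamma$-high points by the requirement that intermediate-scale values stay near $L_N^\star$ (which coincides with $L_N^\gamma$ on $[0,\lambda^{l-1}]$), control the discarded event by the argument of Lemma \ref{lem:IGFF.order1.upper.bound}, and bound the truncated first moment via the last-increment Gaussian estimate with tilt $a_l=(\gamma-\mathcal{J}_{\sigma^2/\bar{\sigma}}(\lambda^{l-1}))/\mathcal{J}_{\sigma^2}(\lambda^{l-1},1)>1/\bar{\sigma}_{l-1}$ (equivalent to $\gamma>\gamma^{l-1}$, i.e.\ $c_\gamma>1$ in the paper) and Lemma \ref{lem: recursion}, before applying Markov's inequality. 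The only bookkeeping point to make explicit is the passage from the constraint on $\psi_{v_{\lambda^j}}(\lambda^j)$ to one on $\psi_v(\lambda^j)$ inside the first-moment computation, which requires intersecting with the event $A_{\varepsilon'}$ and shifting the thresholds by $\eta_{\varepsilon}\log N^2$, exactly as in \eqref{eq:lem:IGFF.high.points.upper.bound.beginning.2}.
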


        \begin{proof}
            Recall the definition of the optimal path $L_N^{\gamma}$ from \eqref{eqn: gamma optimal} and the notation $R_{\lambda^j}$ for the set of representatives at scale $\lambda^j$.
            Consider
            \begin{equation*}
                \mathcal{H}_{N,j}^{\gamma,\varepsilon} \circeq \left\{v\in R_{\lambda^j} \nvert \psi_v(\lambda^j) \geq L_N^{\gamma + j\varepsilon}(\lambda^j)\right\}, \ \ \ 1 \leq j \leq m \ .
            \end{equation*}
            Since $R_{\lambda^m} = V_N$, note that
            \begin{equation*}
                \mathcal{H}_N^{\gamma} = \mathcal{H}_{N,m}^{\gamma,0} = \mathcal{H}_{N,m}^{\gamma - m\varepsilon,\varepsilon}.
            \end{equation*}
            This is useful because the hypothesis $\varepsilon < (\gamma - \gamma^{l-1})/m$ implies $\gamma^{l-1} < \gamma - j\varepsilon \leq \gamma^l$, which means (in particular) that for all $j\in \{1,...,l-1\}$,
            \begin{equation}\label{eq:lem:IGFF.high.points.upper.bound.text.max}
                \text{the paths $L_N^{\star}$ and $L_N^{\gamma - j\varepsilon}$ coincide on the interval $[0,\lambda^j]$.}
            \end{equation}
            The idea is to split the probability that at least $N^{2 \mathcal{E}_{\gamma} + \varepsilon}$ points in $V_N$ reach the optimal height by looking at the first scale $\lambda^j, ~1 \leq j \leq l-1$, where the set $\mathcal{H}_{N,j}^{\gamma - j\varepsilon,\varepsilon}$ is not empty. As for the maximum, this yields the appropriate constraints along the scales to get the correct upper bound. A union bound in \eqref{eq:IGFF.high.points.upper.bound} gives
            \begin{align}\label{eq:lem:IGFF.high.points.upper.bound.beginning.1}
                &\prob{}{|\mathcal{H}_N^{\gamma}| \geq N^{2\mathcal{E}_{\gamma} + \varepsilon}} = \prob{}{|\mathcal{H}_{N,m}^{\gamma - m\varepsilon,\varepsilon}| \geq N^{2\mathcal{E}_{\gamma} + \varepsilon}} \notag \\
                &\quad\leq \prob{}{
                    \begin{array}{l}
                        |\mathcal{H}_{N,1}^{\gamma - 1\varepsilon,\varepsilon}| = ... = |\mathcal{H}_{N,l-1}^{\gamma - (l-1)\varepsilon,\varepsilon}| = 0 \\
                        \text{and } |\mathcal{H}_{N,m}^{\gamma - m\varepsilon,\varepsilon}| \geq N^{2 \mathcal{E}_{\gamma} + \varepsilon}
                    \end{array}
                    } + \sum_{j=1}^{l-1} \prob{}{|\mathcal{H}_{N,j}^{\gamma - j\varepsilon,\varepsilon}| \geq 1}.
            \end{align}
            Because of \eqref{eq:lem:IGFF.high.points.upper.bound.text.max}, the probabilities in the sum are bounded by $N^{-c(\varepsilon)}$ in exactly the same manner as $\mathbb{P}(|\mathcal{H}_{N,m}^{\star,\varepsilon}| \geq 1)$ in Lemma \ref{lem:IGFF.order1.upper.bound}. The first probability in \eqref{eq:lem:IGFF.high.points.upper.bound.beginning.1} is bounded by
            \begin{align}\label{eq:lem:IGFF.high.points.upper.bound.beginning.2}
                &\prob{}{\left|\left\{v\in V_N \nvert
                \begin{array}{l}
                    \psi_v \geq L_N^{\gamma}(1) ~\text{and}~ \psi_{v_{\lambda^j}}(\lambda^j) < L_N^{\gamma}(\lambda^j) \\
                    \text{for all } 1 \leq j \leq l-1
                \end{array}\hspace{-1mm}
                \right\}\right| \geq N^{2 \mathcal{E}_{\gamma} + \varepsilon}} \notag \\
                &\quad\leq C e^{-c(\eta_{\varepsilon}) (\log N)^2} + N^{-\varepsilon} N^{-2 \mathcal{E}_{\gamma}} N^2 \max_{v\in V_N^o} \prob{}{
                \begin{array}{l}
                    \psi_v \geq L_N^{\gamma - \eta_{\varepsilon}}(1) ~\text{and} \\
                    \psi_v(\lambda^j) < L_N^{\gamma + j \eta_{\varepsilon}}(\lambda^j) \\
                    \text{for all } 1\leq j \leq l-1
                \end{array}
                \hspace{-1mm}}
            \end{align}
            using Markov's inequality and using the event $A_{\varepsilon}$ as in \eqref{eqn: H split.1}, where we impose
            \begin{equation*}
                0 < \eta_{\varepsilon} < \min\{\gamma,\mathcal{J}_{\sigma^2}(1) \varepsilon/(4\gamma),\bar{\sigma}_{l-1} \varepsilon / (4 l c_{\gamma}),\varepsilon / m\}
            \end{equation*}
            this time around.
            See \eqref{eq:lem:IGFF.high.points.upper.bound.constant.c.gamma} for the definition of $c_{\gamma}$. See just below and also \eqref{eq:lem:IGFF.high.points.upper.bound.asymp} for the justification of the constraints on $\eta_{\varepsilon}$. When $l = 1$, a Gaussian estimate and the bound $\max_{v\in V_N} \hspace{-1mm}\var{}{\psi_v} \hspace{-0.5mm}\leq \mathcal{J}_{\sigma^2}(1) \log N + \hspace{0.1mm}C$ from \eqref{eq:borne.variance.upper} yield
            \begin{equation*}
                \prob{}{\psi_v \geq L_N^{\gamma - \eta_{\varepsilon}}(1)} \leq \frac{\sqrt{\var{}{\psi_v}}}{L_N^{\gamma - \eta_{\varepsilon}}(1)} \exp\left(-\frac{(L_N^{\gamma - \eta_{\varepsilon}}(1))^2}{2 \var{}{\psi_v}}\right) \leq \frac{C(\gamma) N^{-2 + 2 \mathcal{E}_{\gamma}}}{\sqrt{\log N}} N^{\frac{4\gamma}{\mathcal{J}_{\sigma^2}(1)} \eta_{\varepsilon}}
            \end{equation*}
            because $L_N^{\gamma - \eta_{\varepsilon}}(1) = 2 (\gamma - \eta_{\varepsilon}) \log N$ and $\mathcal{E}_{\gamma} = 1 - \gamma^2/\mathcal{J}_{\sigma^2}(1)$ in this case.
            This proves that the second term in \eqref{eq:lem:IGFF.high.points.upper.bound.beginning.2} decays like $N^{-c(\gamma,\varepsilon)}$, as needed.

            It remains to show a similar estimate for a fixed $l\in \{2,...,m\}$. To simplify the notation, denote $(X_v^1,...,X_v^{l-1},X_v^m) \circeq (\psi_v(\lambda^1),...,\psi_v(\lambda^{l-1}),\psi_v)$. By conditioning  on the value of the vector $\boldsymbol{X} \circeq (X_v^1,...,X_v^{l-1})$, the probability in \eqref{eq:lem:IGFF.high.points.upper.bound.beginning.2} is equal to
            \begin{equation*}
                \int_{-\infty}^{L_N^{\gamma + 1 \eta_{\varepsilon}}(\lambda^1)} \hspace{-5mm}... \int_{-\infty}^{L_N^{\gamma + (l-1) \eta_{\varepsilon}}(\lambda^{l-1})} \hspace{-0.5mm}\prob{}{X_v^m \geq L_N^{\gamma - \eta_{\varepsilon}}(1) \nvert \boldsymbol{X} = \boldsymbol{x}} \hspace{0.5mm}f_v(\boldsymbol{x}) ~d \boldsymbol{x}
            \end{equation*}
            where $f_v$ is the density function of $\boldsymbol{X}$. By independence of the increments, the last integral is equal to
            \begin{equation}\label{eq:lem:IGFF.high.points.upper.bound.integral.1}
                \int_{-\infty}^{L_N^{\gamma + 1 \eta_{\varepsilon}}(\lambda^1)} \hspace{-5mm} ... \int_{-\infty}^{L_N^{\gamma + (l-1) \eta_{\varepsilon}}(\lambda^{l-1})} \hspace{-0.5mm} \prob{}{X_v^m - X_v^{l-1} \geq L_N^{\gamma - \eta_{\varepsilon}}(1) - x_{l-1}} \hspace{0.5mm}f_v(\boldsymbol{x}) ~d \boldsymbol{x}\ .
            \end{equation}
            The bound $\max_{v\in V_N} \var{}{X_v^m - X_v^{l-1}} \leq \mathcal{J}_{\sigma^2}(\lambda^{l-1},1) \log N + C$ from \eqref{eq:borne.variance.upper} and a Gaussian estimate show that
            \begin{align*}
                &\prob{}{X_v^m - X_v^{l-1} \geq L_N^{\gamma - \eta_{\varepsilon}}(1) - x_{l-1}} \\
                &= \prob{}{X_v^m - X_v^{l-1} \geq L_N^{\gamma}(1) - L_N^{\gamma}(\lambda^{l-1}) + L_N^{\gamma - \eta_{\varepsilon}}(\lambda^{l-1}) - x_{l-1}} \\
                &~~\leq \frac{C(\gamma)}{\sqrt{\log N}} N^{-2 \frac{(\gamma - \mathcal{J}_{\sigma^2 / \bar{\sigma}}(\lambda^{l-1}))^2}{\mathcal{J}_{\sigma^2}(\lambda^{l-1},1)}} \exp\left(-2\frac{(\gamma - \mathcal{J}_{\sigma^2 / \bar{\sigma}}(\lambda^{l-1}))}{\mathcal{J}_{\sigma^2}(\lambda^{l-1},1)}(L_N^{\gamma - \eta_{\varepsilon}}(\lambda^{l-1}) - x_{l-1})\right)
            \end{align*}
            where we introduced $L_N^{\gamma}(\lambda^{l-1})$ and used \eqref{eqn: gamma optimal}.
            By definition of $\mathcal E_\gamma$ and the definition of $\gamma^{l-1}$ in \eqref{eq:critic.levels}, this is equal to
            \begin{align}\label{eq:lem:IGFF.high.points.upper.bound.bound.before.constant}
                &\frac{C(\gamma) N^{-2 + 2\mathcal{E}_{\gamma}}}{\sqrt{\log N}} N^{2 \lambda^{l-1}}\exp\left(-2 \left[\frac{(\gamma - \gamma^{l-1})}{\mathcal{J}_{\sigma^2}(\lambda^{l-1},1)} + \frac{1}{\bar{\sigma}_{l-1}}\right] (L_N^{\gamma - \eta_{\varepsilon}}(\lambda^{l-1}) - x_{l-1})\right) \notag \\
                &= \frac{C(\gamma) N^{-2 + 2\mathcal{E}_{\gamma}}}{\sqrt{\log N}} N^{2 \lambda^{l-1}} N^{\frac{4 l c_{\gamma}}{\bar{\sigma}_{l-1}}\eta_{\varepsilon}}\exp\left(-2 \frac{c_{\gamma}}{\bar{\sigma}_{l-1}} (L_N^{\gamma + (l-1) \eta_{\varepsilon}}(\lambda^{l-1}) - x_{l-1})\right)
            \end{align}
            where
            \begin{equation}\label{eq:lem:IGFF.high.points.upper.bound.constant.c.gamma}
                c_{\gamma} \circeq \frac{(\gamma - \gamma^{l-1}) \bar{\sigma}_{l-1}}{\mathcal{J}_{\sigma^2}(\lambda^{l-1},1)} + 1 > 1 \ .
            \end{equation}
            Putting the bound \eqref{eq:lem:IGFF.high.points.upper.bound.bound.before.constant} in \eqref{eq:lem:IGFF.high.points.upper.bound.integral.1} and in \eqref{eq:lem:IGFF.high.points.upper.bound.beginning.2},
            we get that the first term in \eqref{eq:lem:IGFF.high.points.upper.bound.beginning.1} decays like
            \begin{equation}\label{eq:lem:IGFF.high.points.upper.bound.asymp}
                N^{-\left(\varepsilon - \frac{4 l c_{\gamma}}{\bar{\sigma}_{l-1}} \eta_{\varepsilon}\right)}
            \end{equation}
            provided that
            \vspace{-2mm}
            \begin{equation*}
                \int_0^{\infty} \int_{-y_1}^{\infty} ... \int_{-\sum_{j=1}^{l-2} y_j}^{\infty}  \prod_{j=1}^{l-1} e^{-2 \frac{c_{\gamma}}{\bar{\sigma}_{l-1}} y_j}\  \frac{e^{-\frac{\left((y_j - \eta_{\varepsilon} \log N^2) - \nabla L_N^{\gamma}(\lambda^j)\right)^2}{2 \var{}{Y_{v,j}}}}}{\sqrt{2\pi \var{}{Y_{v,j}}}} d \boldsymbol{y} \leq \tilde{C} N^{-2\lambda^{l-1}},
            \end{equation*}
            where $Y_{v,j} \circeq \eta_{\varepsilon} \log N^2 + \nabla L_N^{\gamma}(\lambda^j) - \nabla X_v^j$.
            Similarly to \eqref{eqn:  multiple}, the integral has the right decay as a consequence of Lemma \ref{lem: recursion}, with $a_l \circeq c_{\gamma} / \bar{\sigma}_{l-1} > 1 / \bar{\sigma}_{l-1}$, because $L_N^\star$ and $L_N^\gamma$ coincide on the interval $[0,\lambda^{l-1}]$.
        \end{proof}

        \begin{lemma}[Lower bound on the log-number of high points]\label{lem:IGFF.high.points.lower.bound}
            Let $\{\psi_v\}_{v\in V_N}$ be the $(\boldsymbol{\sigma},\boldsymbol{\lambda})$-GFF on $V_N$ of Definition \ref{def:IGFF} and $\gamma^{\star}$ as in Theorem \ref{thm:IGFF.order1}. Let $\gamma > 0$ be such that $\gamma^{l-1} \leq \gamma < \gamma^{l}$ for some $l\in \{1,...,m\}$.
            For all $0 < \varepsilon < \min\{1/4,(\gamma^l - \gamma)/(4\gamma)\}$, there exists a constant $c = c(\gamma,\varepsilon,\boldsymbol{\sigma},\boldsymbol{\lambda}) > 0$ such that
            \begin{equation*}
                \prob{}{|\mathcal{H}_N^{\gamma}| < N^{2\mathcal{E}_{\gamma} - \tilde\varepsilon}} \leq N^{-c}
            \end{equation*}
            for $N$ large enough, where $\tilde\varepsilon \circeq \frac{24 (\gamma^{\star})^2}{\bar{\sigma}_m^2 \nabla \lambda^m} \varepsilon$.
        \end{lemma}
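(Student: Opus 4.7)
The plan is to mimic the Paley--Zygmund argument of Lemma \ref{lem:IGFF.order1.lower.bound}, replacing the optimal path for the maximum $L_N^\star$ by the $\gamma$-optimal path $L_N^\gamma$ from \eqref{eqn: gamma optimal}. By approximating $\lambda_i$ by rationals, as in the discussion preceding the proof of Lemma \ref{lem:IGFF.order1.lower.bound}, one may assume $\lambda_i \in \Q$ and fix a partition $\alpha_k \circeq k/K$ of $[0,1]$ with $K\in \N$ large enough that $\{\lambda^l\}_{l=0}^m \subseteq \{\alpha_k\}_{k=0}^K$. Fix $\delta \in (0,1/2)$, $r \geq 3$, and introduce the sub-optimal path $L_{N,\varepsilon}^\gamma \circeq (1-\varepsilon) L_N^\gamma$ together with the modified count
\[
\mathcal{N}_\varepsilon^\gamma \circeq \sum_{v\in V_N^\delta} 1_{A_v^\gamma}, \qquad A_v^\gamma \circeq \bigl\{\nabla \psi_v(\alpha_j) \geq \nabla L_{N,\varepsilon}^\gamma(\alpha_j),\ r+1 \leq j \leq K\bigr\}.
\]
Exactly as in \eqref{eq:sufficient.lower.max}--\eqref{eq:order1.lower.bound.gaussian.before.proof}, a union bound on $\psi_v(\alpha_r)$ with a Gaussian tail estimate gives $|\mathcal{H}_N^\gamma| \geq \mathcal{N}_\varepsilon^\gamma$ up to an event of probability $N^{-c}$, so the task reduces to showing $\prob{}{\mathcal{N}_\varepsilon^\gamma \geq N^{2\mathcal{E}_\gamma - \tilde\varepsilon}} \geq 1 - N^{-c}$.

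First, I would compute the first moment from the independence of the increments and the variance estimate \eqref{eqn: variance increments}, obtaining
\[
\esp{}{\mathcal{N}_\varepsilon^\gamma} \asymp (\log N)^{-O(1)} \, N^{2\mathcal{E}_\gamma - c_1 \varepsilon}
\]
for some $c_1 = c_1(\gamma,\boldsymbol{\sigma},\boldsymbol{\lambda}) > 0$ independent of $\varepsilon$. The exponent $2\mathcal{E}_\gamma$ emerges by telescoping the Gaussian tail contributions at the scales $\alpha_j$, using that $L_N^\gamma$ coincides with $L_N^\star$ on $[0,\lambda^{l-1}]$ (where the identity $\int_0^{\lambda^l} \sigma^2/\bar\sigma^2 \, dr = \lambda^l$ holds because $\mathcal{J}_{\sigma^2}(\lambda^l) = \mathcal{J}_{\bar\sigma^2}(\lambda^l)$ at every concave hull contact point) and is linear in $\mathcal{J}_{\sigma^2}$ on $[\lambda^{l-1},1]$ (which produces precisely the quadratic term $-(\gamma - \mathcal{J}_{\sigma^2/\bar\sigma}(\lambda^{l-1}))^2 / \mathcal{J}_{\sigma^2}(\lambda^{l-1},1)$ of $\mathcal{E}_\gamma$). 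The factor $\tilde\varepsilon \circeq \tfrac{24(\gamma^\star)^2}{\bar\sigma_m^2 \nabla \lambda^m}\varepsilon$ is chosen so that $\tilde\varepsilon > c_1\varepsilon$ with ample room.

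Next I would bound the second moment $\esp{}{(\mathcal{N}_\varepsilon^\gamma)^2} = \sum_{v,v'\in V_N^\delta} \prob{}{A_v^\gamma \cap A_{v'}^\gamma}$ by splitting over $\rho(v,v')$ as in \eqref{eqn: sum split}. Pairs with $\rho(v,v') < \alpha_r$ contribute at most $(\esp{}{\mathcal{N}_\varepsilon^\gamma})^2$ by the Markov property (Lemma \ref{lem:psi.Markov}). For pairs with $\alpha_{k-1} \leq \rho(v,v') < \alpha_k$ and $r+1 \leq k \leq K-1$, dropping the constraints at scales $k-1,k$ for $v$ and factoring via Markov gives, exactly as in \eqref{eq:sum.split.second.term}, a bound of the form $(\esp{}{\mathcal{N}_\varepsilon^\gamma})^2 \cdot (\log N)^{O(1)} \, N^{-2\alpha_{k-1}} \prod_{j=r+1}^k e_j^{-1}$. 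When $\alpha_k \leq \lambda^{l-1}$, $L_N^\gamma = L_N^\star$ and this reproduces the calculation of Lemma \ref{lem:IGFF.order1.lower.bound}, yielding $N^{-c\varepsilon}$ from the $(1-\varepsilon)^2$ loss of the sub-optimal path. When $\alpha_k > \lambda^{l-1}$, the linear profile of $L_N^\gamma$ lies strictly below the concavification $\hat L_N^\star$, providing a macroscopic slack in the entropy constraint, which combined with the $(1-\varepsilon)^2$ loss gives $N^{-c\varepsilon}$ once more. Summing over $k$ then yields $\esp{}{(\mathcal{N}_\varepsilon^\gamma)^2} \leq (1 + O(N^{-c})) (\esp{}{\mathcal{N}_\varepsilon^\gamma})^2$.

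The most delicate step is this second moment estimate in the linear regime $\alpha_k > \lambda^{l-1}$: because $L_N^\gamma$ is no longer on the concave hull, the individual entropy costs are not tight, and one must verify that the accumulated slack is indeed enough to dominate the correlation prefactor $N^{-2\alpha_{k-1}}$ uniformly in every intermediate $k$. Once this is established, the Paley--Zygmund inequality (Lemma \ref{lem:paley.zygmund}) applied with threshold $\theta \circeq N^{-(\tilde\varepsilon - c_1 \varepsilon)/2}$ produces $\prob{}{\mathcal{N}_\varepsilon^\gamma \geq N^{2\mathcal{E}_\gamma - \tilde\varepsilon}} \geq 1 - O(N^{-c})$ for some $c > 0$, which together with the reduction above completes the argument.
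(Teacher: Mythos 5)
There is a genuine gap at the very first reduction step. You define the count $\mathcal{N}_\varepsilon^\gamma$ using the deflated path $L_{N,\varepsilon}^\gamma \circeq (1-\varepsilon)L_N^\gamma$, whose terminal value is $(1-\varepsilon)\gamma\log N^2 < \gamma\log N^2$; moreover you drop the first $r$ increments, losing another $\approx \varepsilon\gamma\log N^2$ even on the good event $\min_v \psi_v(\alpha_r) > -\varepsilon\gamma\log N^2$. Hence a point $v$ with $1_{A_v^\gamma}=1$ is only guaranteed to satisfy $\psi_v \gtrsim (1-2\varepsilon)\gamma\log N^2$, which does \emph{not} put it in $\mathcal{H}_N^\gamma$, so the claimed inequality ``$|\mathcal{H}_N^\gamma| \geq \mathcal{N}_\varepsilon^\gamma$ up to an event of probability $N^{-c}$'' fails. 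In the maximum case (Lemma \ref{lem:IGFF.order1.lower.bound}) this loss is harmless because the statement itself only asks for the level $(1-3\varepsilon)\gamma^\star$, but here the threshold $\gamma\log N^2$ is fixed and the $\varepsilon$-loss must be absorbed into the entropy exponent, not the level. The paper's proof fixes exactly this by overshooting the level: it takes $L_{N,\varepsilon}^\gamma \circeq (1-\varepsilon)L_N^{\gamma(1+4\varepsilon)}$, so that $(1-\varepsilon)(1+4\varepsilon)\gamma > (1+2\varepsilon)\gamma$ (this is where $\varepsilon<1/4$ enters), and after subtracting the dropped initial increments the counted points still exceed $(1+\varepsilon)\gamma\log N^2 > \gamma\log N^2$. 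The price is that the first moment produces the entropy $\mathcal{E}_{\gamma(1+4\varepsilon)}$ rather than $\mathcal{E}_\gamma$, and the explicit constant in $\tilde\varepsilon = \frac{24(\gamma^\star)^2}{\bar\sigma_m^2\nabla\lambda^m}\varepsilon$ is pinned down precisely by the continuity estimate $|\mathcal{E}_{\gamma(1+4\varepsilon)}-\mathcal{E}_\gamma| \leq \tilde\varepsilon/2$; your assertion that $\tilde\varepsilon$ merely needs to dominate some unspecified $c_1\varepsilon$ does not justify the constant appearing in the statement. The hypothesis $\varepsilon < (\gamma^l-\gamma)/(4\gamma)$, which you quote but never use, is exactly what guarantees $\gamma(1+4\varepsilon)<\gamma^l$, which in turn is needed for the increment-domination property $\nabla L_{N,\varepsilon}^\gamma \leq \nabla L_{N,\varepsilon}^\star$ (proved in the paper by a slope computation using that $\bar\sigma$ is non-increasing) that makes the ``third factor'' in the second-moment split bounded by $1$.

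Apart from this, your outline of the second-moment computation is in the right spirit: the split over $\rho(v,v')$, the use of Lemma \ref{lem:psi.Markov}, and the observation that on $[0,\lambda^{l-1}]$ the paths coincide are all as in the paper. But the justification you flag as ``delicate'' for $\alpha_k > \lambda^{l-1}$ should not be phrased as ``$L_N^\gamma$ lies strictly below the concavification''; what is actually needed (and what the paper proves) is that the \emph{increments} of the deflated $\gamma$-path are dominated by the increments of the deflated optimal path, uniformly over subintervals, and this again requires the boosted level to stay strictly below $\gamma^l$. Without the level boost and the increment comparison for $\gamma(1+4\varepsilon)$, the argument as written does not close.
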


        We use the same notations as in the proof of Lemma \ref{lem:IGFF.order1.lower.bound}.
        As before, we can assume, without loss of generality, that $\lambda_i K\in \N_0$ for all $\{0,...,M\}$ so that the $\alpha_k$'s form a finer partition of $[0,1]$ than the $\lambda_i$'s.
        The parameter $K\in \N$ will be chosen large enough depending on $\gamma$ and $\varepsilon$ during the proof. Again, we restrict ourselves to $V_N^{\delta}$ to ensure that for all $k\in \{1,...,K\}$ and for all $v\in V_N^{\delta}$ :
        \begin{equation}\label{eq:variance.increments.high.points}
            |\var{}{\nabla\psi_v(\alpha_k)} - \sigma^2(\alpha_k)\nabla \alpha_k  \log N| \leq C(\delta)\ .
        \end{equation}
        The parameter $\delta\in (0,1/2)$ remains fixed to an arbitrary value in the remainder of this section.
        Next, define the path :
        \begin{equation*}
            L_{N,\varepsilon}^{\gamma}(s) \circeq (1 - \varepsilon) L_N^{\gamma (1 + 4\varepsilon)}(s), \ \ \ s\in [0,1]\ .
        \end{equation*}
        Since $\varepsilon < (\gamma^l - \gamma)/(4\gamma)$ by hypothesis, we have $\gamma^{l-1} \leq \gamma < \gamma (1 + 4\varepsilon) < \gamma^l$. This condition implies that the increments of the path $L_{N,\varepsilon}^{\gamma}$ are always bounded by the increments of the sub-optimal path $L_{N,\varepsilon}^{\star}$ (see Figure \ref{fig:optimal}), namely
        \begin{equation}\label{eq:lem:IGFF.high.points.lower.bound.sub.optimal.path.inequality}
            L_{N,\varepsilon}^{\gamma}(s_2) - L_{N,\varepsilon}^{\gamma}(s_1) \leq L_{N,\varepsilon}^{\star}(s_2) - L_{N,\varepsilon}^{\star}(s_1), \ \ \ 0 \leq s_1 \leq s_2 \leq 1\hspace{0.3mm}.
        \end{equation}
        Indeed, the paths $L_N^{\gamma(1 + 4\varepsilon)}$ and $L_N^{\star}$ coincide on the interval $[0,\lambda^{l-1}]$. Moreover, when $s\in (\lambda^{l-1},1]$, we have by the definition of the critic levels $\gamma^l$ in \eqref{eq:critic.levels} and the optimal path $L_N^{\gamma(1 + 4\varepsilon)}$ in \eqref{eqn: gamma optimal} :
        \begin{align*}
            \frac{d}{ds} \frac{(L_N^{\gamma(1 + 4\varepsilon)}(s) - L_N^{\star}(s))}{\log N^2}
            &= \frac{d}{ds} \int_{\lambda^{l-1}}^s \hspace{-1mm} \left[\sigma^2(u) \frac{\left(\gamma(1 + 4\varepsilon) - \mathcal{J}_{\sigma^2/\bar{\sigma}}(\lambda^{l-1})\right)}{\mathcal{J}_{\sigma^2}(\lambda^{l-1},1)} - \frac{\sigma^2(u)}{\bar{\sigma}(u)}\right] \hspace{-1mm} du \\
            &\leq \frac{\sigma^2(s)}{\bar{\sigma}_l} - \frac{\sigma^2(s)}{\bar{\sigma}(s)} \qquad \text{since } \gamma (1 + 4\varepsilon) < \gamma^l \\
            &\leq 0 \qquad \text{since $\bar{\sigma}$ is non-increasing}.
        \end{align*}
        This proves inequality \eqref{eq:lem:IGFF.high.points.lower.bound.sub.optimal.path.inequality}. By hypothesis, we also have $\varepsilon < 1/4$, which yields
        \begin{equation}\label{eq:high.points.lower.bound.second.hypothesis}
            L_{N,\varepsilon}^{\gamma}(1) = (1 - \varepsilon) (1 + 4\varepsilon) \gamma \log N^2 > (1 + 2\varepsilon) \gamma \log N^2\ .
        \end{equation}

        The proof again relies on the Paley-Zygmund inequality applied to a \textit{modified number of exceedances} where we consider only points in $V_N^{\delta}$ whose increments are almost optimal. We drop the first $r$ increments to allow more independence which is needed for the second-moment method to work.
        We can choose $r \geq 3$ independently of any other variable as in the proof of Lemma \ref{lem:IGFF.order1.lower.bound}.
        The case $l = 1$ is easier to deal with, so we omit the details. Assume $l\in \{2,...,m\}$ and define
        \begin{equation*}
            \mathcal{N}_{\varepsilon}^{\gamma} \circeq \sum_{v\in V_N^{\delta}} 1_{A_v} \ \ \text{where}\ \ A_v \circeq \{\nabla \psi_v(\alpha_j) \geq \nabla L_{N,\varepsilon}^{\gamma}(\alpha_j) \ \ \forall j\in \{r+1,...,K\}\}.
        \end{equation*}
        Note that for a fixed $\varepsilon > 0$, there is the following inequality for $c = c(\gamma,\varepsilon) > 0$ :
        \begin{equation}\label{eq:lem:IGFF.high.points.lower.bound.before.proof}
            \prob{}{|\mathcal{H}_N^{\gamma}| \geq N^{2 \mathcal{E}_{\gamma} - \tilde\varepsilon}} \geq \prob{}{\mathcal{N}_{\varepsilon}^{\gamma} \geq N^{2 \mathcal{E}_{\gamma} - \tilde\varepsilon}} - O(N^{-c}).
        \end{equation}
        Indeed, the probability $\prob{}{\mathcal{N}_{\varepsilon}^{\gamma} \geq N^{2 \mathcal{E}_{\gamma} - \tilde\varepsilon}}$ is equal to
        \begin{equation}
            \begin{aligned}\label{eq:lem:IGFF.high.points.lower.bound.two.prob.decomp}
            &\prob{}{\mathcal{N}_{\varepsilon}^{\gamma} \geq N^{2 \mathcal{E}_{\gamma} - \tilde\varepsilon},\min_{v\in V_N^{\delta}} \psi_v(\alpha_r) > -\varepsilon \gamma \log N^2} \\
            &\quad+ \prob{}{\mathcal{N}_{\varepsilon}^{\gamma} \geq N^{2 \mathcal{E}_{\gamma} - \tilde\varepsilon},\min_{v\in V_N^{\delta}} \psi_v(\alpha_r) \leq -\varepsilon \gamma \log N^2}
            \end{aligned}
        \end{equation}
        To simplify the argument, assume from now on that $K$ is large enough to ensure $\alpha_r \leq \lambda^{l-1}$.
        The first probability in \eqref{eq:lem:IGFF.high.points.lower.bound.two.prob.decomp} is smaller than $\prob{}{|\mathcal{H}_N^{\gamma}| \geq N^{2 \mathcal{E}_{\gamma} - \tilde\varepsilon}}$ because the points $v\in V_N^{\delta}$ that are contributing to the sum $\mathcal{N}_{\varepsilon}^{\gamma}$, on the event $\{\min_{v\in V_N^{\delta}} \psi_v(\alpha_r) > -\varepsilon \gamma \log N^2\}$, are also in $\mathcal{H}_N^{\gamma}$. Indeed, when $1_{A_v} = 1$,
        \begin{align}\label{eq:lem:IGFF.high.points.lower.bound.two.prob.decomp.2}
            \psi_v - \psi_v(\alpha_r)
            &\geq (1 - \varepsilon) L_N^{\gamma(1 + 4\varepsilon)}(1) - (1 - \varepsilon) L_N^{\star}(\alpha_r) \notag \\
            &= (1 - \varepsilon) (1 + 4\varepsilon) \gamma \log N^2 - (1 - \varepsilon) \mathcal{J}_{\sigma^2 / \bar{\sigma}}(\alpha_r) \log N^2 \notag \\
            &\geq (1 + \varepsilon) \gamma \log N^2
        \end{align}
        where we take $K$ large enough that $(1 - \varepsilon) \mathcal{J}_{\sigma^2 / \bar{\sigma}}(\alpha_r) < \varepsilon \gamma$ and use \eqref{eq:high.points.lower.bound.second.hypothesis} to obtain the last inequality in \eqref{eq:lem:IGFF.high.points.lower.bound.two.prob.decomp.2}. The distribution of $\psi_v(\alpha_r)$ is symmetric, so the second probability in \eqref{eq:lem:IGFF.high.points.lower.bound.two.prob.decomp} is smaller than
        \begin{equation*}
            \prob{}{\max_{v\in V_N^{\delta}} \psi_v(\alpha_r) \geq \varepsilon \gamma \log N^2} \leq N^2 \exp\left(-\frac{(\varepsilon \gamma)^2 \log N^2}{\max_i \sigma_i^2 \alpha_r}\right)
        \end{equation*}
        where we used a union bound, a Gaussian estimate and \eqref{eq:borne.variance.upper} to get the inequality.
        This is $O(N^{-c})$ by choosing $K$ large enough for a fixed $\varepsilon$ and $r$. Therefore, we have \eqref{eq:lem:IGFF.high.points.lower.bound.before.proof} as claimed.

        \begin{proof}[Proof of Lemma \ref{lem:IGFF.high.points.lower.bound}]
            In view of \eqref{eq:lem:IGFF.high.points.lower.bound.before.proof}, it suffices to show that $\prob{}{\mathcal{N}_{\varepsilon}^{\gamma} \geq N^{2 \mathcal{E}_{\gamma} - \tilde\varepsilon}} = 1 - O(N^{-c})$.
            The Paley-Zygmund inequality (Lemma \ref{lem:paley.zygmund}) implies
            \begin{equation}\label{eqn: PZ high}
                \prob{}{\mathcal{N}_{\varepsilon}^{\gamma} \geq N^{2 \mathcal{E}_{\gamma} - \tilde\varepsilon}} \geq \left(1 - \frac{N^{2 \mathcal{E}_{\gamma} - \tilde\varepsilon}}{\esp{}{\mathcal{N}_{\varepsilon}^{\gamma}}}\right)^2 \frac{(\esp{}{\mathcal{N}_{\varepsilon}^{\gamma}})^2}{\esp{}{(\mathcal{N}_{\varepsilon}^{\gamma})^2}}.
            \end{equation}
            First, we make sure that $N^{2 \mathcal{E}_{\gamma} - \tilde\varepsilon}/\esp{}{\mathcal{N}_{\varepsilon}^{\gamma}}\to 0$ as $N\to \infty$. By independence of the increments and the variance estimate \eqref{eq:variance.increments.high.points},
            Gaussian estimates yield for some constant $c = c(\gamma,\varepsilon,\delta) > 0$ :
            \vspace{-2mm}
            \begin{align}\label{eq:lem:IGFF.high.points.lower.bound.first.moment.lower.bound}
                \esp{}{\mathcal{N}_{\varepsilon}^{\gamma}}
                &= \sum_{v\in V_N^{\delta}} \prob{}{A_v} = \sum_{v\in V_N^{\delta}} \prod_{j=r+1}^K \prob{}{\nabla \psi_v(\alpha_j) \geq \nabla L_{N,\varepsilon}^{\gamma}(\alpha_j)} \notag \\
                &\geq c \cdot (\log N)^{-\frac{1}{2}(K - r)} N^{2(1 - (1 - \varepsilon)^2) + 2(1 - \varepsilon)^2 \mathcal{E}_{\gamma(1 + 4\varepsilon)} + 2(1 - \varepsilon)^2 \int_0^{\alpha_r} \frac{\sigma^2(s)}{\bar{\sigma}^2(s)}ds} \notag \\
                &\geq N^{2(1 - (1 - \varepsilon)^2) + 2(1 - \varepsilon)^2 \mathcal{E}_{\gamma(1 + 4\varepsilon)}}.
            \end{align}
            By the definition of $\mathcal{E}_{\gamma}$ in Theorem \ref{thm:IGFF.high.points}, and because $\gamma^{l-1} \leq \gamma < \gamma (1 + 4\varepsilon) < \gamma^l$,
            \begin{align}\label{eq:high.points.lower.bound.entropy.continuity}
                \left|\mathcal{E}_{\gamma(1 + 4\varepsilon)} - \mathcal{E}_{\gamma}\right|
                &= \frac{(\gamma(1 + 4\varepsilon) - \mathcal{J}_{\sigma^2 / \bar{\sigma}}(\lambda^{l-1}))^2 - (\gamma - \mathcal{J}_{\sigma^2 / \bar{\sigma}}(\lambda^{l-1}))^2}{\mathcal{J}_{\sigma^2}(\lambda^{l-1},1)} \notag \\
                &= \frac{16 \varepsilon^2 \gamma^2 + 8 \varepsilon \gamma (\gamma - \mathcal{J}_{\sigma^2 / \bar{\sigma}}(\lambda^{l-1}))}{\mathcal{J}_{\sigma^2}(\lambda^{l-1},1)} \leq \frac{12(\gamma^{\star})^2}{\bar{\sigma}_m^2 \nabla \lambda^m} \varepsilon \circeq \tilde\varepsilon/2
            \end{align}
            where we used $\varepsilon < 1/4$, $\gamma < \gamma^{\star}$ and $\mathcal{J}_{\sigma^2}(\lambda^{l-1},1) \geq \mathcal{J}_{\sigma^2}(\lambda^{m-1},1) = \bar{\sigma}_m^2 \nabla \lambda^m$ to obtain the inequality.
            By inserting the bound \eqref{eq:high.points.lower.bound.entropy.continuity} in \eqref{eq:lem:IGFF.high.points.lower.bound.first.moment.lower.bound}, we get
            \begin{equation*}
                \esp{}{\mathcal{N}_{\varepsilon}^{\gamma}} \geq N^{2(1 - (1 - \varepsilon)^2) + 2 (1 - \varepsilon)^2 (\mathcal{E}_{\gamma} - \tilde\varepsilon/2)} = N^{2\mathcal{E}_{\gamma} - \tilde\varepsilon} N^{2(1 - (1 - \varepsilon)^2) (1 - \mathcal{E}_{\gamma} + \tilde\varepsilon/2)}.
            \end{equation*}
            Since $(1 - \varepsilon)^2 < 1$ and $\mathcal{E}_{\gamma} \leq 1$, it proves the assertion that $N^{2 \mathcal{E}_{\gamma} - \tilde\varepsilon}/\esp{}{\mathcal{N}_{\varepsilon}^{\gamma}}\to 0$ and also justify the use of the Paley-Zygmund inequality.
            In view of \eqref{eqn: PZ high}, it suffices to show, like in Lemma \ref{lem:IGFF.order1.lower.bound}, that
            \begin{equation*}
                \esp{}{(\mathcal{N}_\varepsilon^\gamma)^2} \leq (1 + O(N^{-\frac{1}{2K} (1 - (1 - \varepsilon)^2})) \ (\esp{}{\mathcal{N}_\varepsilon^\gamma})^2
            \end{equation*}
            to prove the lemma. The proof is almost identical to the proof of Lemma \ref{lem:IGFF.order1.lower.bound}. Indeed, by Gaussian estimates and the variance estimates in \eqref{eq:variance.increments.high.points}, the probabilities on the increments in $A_v$ are for every $j$ and $v\in V_N^\delta$ :
            \begin{equation*}
                \tilde p_{v,j} \circeq \prob{}{\nabla \psi_v(\alpha_j)\geq \nabla L_{N,\varepsilon}^{\gamma}(\alpha_j)} \asymp \frac{\tilde e_j}{\sqrt{\log N}}
            \end{equation*}
            where the $\tilde e_j$'s are the corresponding exponential factors. The proof is exactly the same up to \eqref{eq:sum.split.second.term} with $\tilde e_j$'s instead of $e_j$'s. From there, the third factor in the decomposition is still bounded by $1$ because of property \eqref{eq:lem:IGFF.high.points.lower.bound.sub.optimal.path.inequality}, and the rest of the argument follows if we choose $K$ large enough for a fixed $\varepsilon$ and $\gamma$. This ends the proof of the lemma.
        \end{proof}

\appendix

\section{Technical lemmas}\label{sec:tech.lemmas}

    The Markov property of the GFF, which is a consequence of the strong Markov property of the simple random walk (in the covariance function in \eqref{eqn: G}),
    implies that the value of the field inside a neighborhood is independent of the field outside given the boundary, see e.g.~\citet{MR585179}.
    \hspace{-0.5mm}In particular, for the \hspace{-0.5mm}neighborhood \hspace{-0.5mm}$[v]_\lambda$, where $\lambda\in [0,1]$, this implies
    \begin{equation}\label{eq:GFF.Markov}
        \phi_v(\lambda) \circeq \esp{}{\phi_v \nvert \F_{\partial [v]_{\lambda} \cup [v]_{\lambda}^c}} = \esp{}{\phi_v \nvert \F_{\partial [v]_{\lambda}}}.
    \end{equation}
    Let $v,v'\in V_N$, $\lambda < \lambda'$ and $\mu < \mu'$.
    Another direct consequence is the fact that for $\lambda, \mu > \rho(v,v')$ or $\lambda > \rho(v,v') > \mu'$,
    \begin{equation}\label{eq:GFF.Markov.independence}
        \phi_v(\lambda') - \phi_v(\lambda) \quad \text{is independent of} \quad \phi_{v'}(\mu') - \phi_{v'}(\mu)\ .
    \end{equation}
    This is because the shell $[v]_{\lambda} \cap [v]_{\lambda'}^c$ does not intersect the shell $[v']_{\mu} \cap [v']_{\mu'}^c$ in both cases, see Figure \ref{fig:branching}.
    We stress that, in general, the field $\psi$ does not have the Markov property.
    However, by working with increments of the field $\psi$,  the property analogous to \eqref{eq:GFF.Markov.independence} can be proved.

    \begin{lemma}\label{lem:psi.Markov}
        Let $v,v'\in V_N$, $\lambda < \lambda'$ and $\mu < \mu'$. If we have $\lambda,\mu > \rho(v,v')$ or $\lambda > \rho(v,v') > \mu'$, then
        \begin{equation*}
            \psi_v(\lambda') - \psi_v(\lambda) \quad \text{is independent of} \quad \psi_{v'}(\mu') - \psi_{v'}(\mu)\ .
        \end{equation*}
    \end{lemma}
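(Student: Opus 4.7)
The strategy is to reduce the statement to the Markov independence \eqref{eq:GFF.Markov.independence} already known for the underlying GFF $\phi$. The key preliminary identity I would establish is
\begin{equation*}
    \psi_v(\lambda) = \sum_{i=1}^M \sigma_i \bigl[\phi_v(\lambda_i \wedge \lambda) - \phi_v(\lambda_{i-1} \wedge \lambda)\bigr], \quad \lambda \in [0,1].
\end{equation*}
This follows from linearity of the conditional expectation applied to \eqref{eqn: psi v}, together with two observations about each term $\mathbb{E}[\phi_v(\lambda_i) \mid \F_{\partial [v]_\lambda \cup [v]_\lambda^c}]$: when $\lambda_i \leq \lambda$, the value $\phi_v(\lambda_i)$ is already $\F_{\partial [v]_\lambda \cup [v]_\lambda^c}$-measurable because $[v]_\lambda \subseteq [v]_{\lambda_i}$ places $\partial [v]_{\lambda_i}$ inside $\partial [v]_\lambda \cup [v]_\lambda^c$ (using \eqref{eq:GFF.Markov}); when $\lambda_i > \lambda$, the tower property on the filtration $\mathbb{F}_v$ gives $\mathbb{E}[\phi_v(\lambda_i) \mid \F_{\partial [v]_\lambda \cup [v]_\lambda^c}] = \phi_v(\lambda)$.

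Subtracting then yields
\begin{equation*}
    \psi_v(\lambda') - \psi_v(\lambda) = \sum_{i=1}^M \sigma_i \Bigl[\bigl(\phi_v(\lambda_i \wedge \lambda') - \phi_v(\lambda_i \wedge \lambda)\bigr) - \bigl(\phi_v(\lambda_{i-1} \wedge \lambda') - \phi_v(\lambda_{i-1} \wedge \lambda)\bigr)\Bigr],
\end{equation*}
and each non-vanishing term on the right is a $\phi_v$-increment over a sub-interval of $(\lambda, \lambda']$. In exactly the same fashion, $\psi_{v'}(\mu') - \psi_{v'}(\mu)$ decomposes as a (deterministic) linear combination of $\phi_{v'}$-increments over sub-intervals of $(\mu, \mu']$.

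Under either hypothesis $\lambda, \mu > \rho(v,v')$ or $\lambda > \rho(v,v') > \mu'$, every pair consisting of one $\phi_v$-increment from the first collection and one $\phi_{v'}$-increment from the second falls under one of the two scenarios of \eqref{eq:GFF.Markov.independence} and is therefore independent. Since the whole finite family of these increments is jointly Gaussian (they are linear functionals of the GFF $\phi$), pairwise independence upgrades to joint independence, and hence the two linear combinations $\psi_v(\lambda') - \psi_v(\lambda)$ and $\psi_{v'}(\mu') - \psi_{v'}(\mu)$ are independent.

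The only delicate step is the derivation of the representation of $\psi_v(\lambda)$; once that identity is in place, the conclusion is a direct transfer of the Markov independence from $\phi$ to $\psi$ via linearity of the increment and joint Gaussianity, with no further estimates required.
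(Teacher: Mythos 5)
Your proof is correct and follows essentially the same route as the paper: the representation $\psi_v(\lambda)=\sum_i \sigma_i(\phi_v(\lambda_i\wedge\lambda)-\phi_v(\lambda_{i-1}\wedge\lambda))$ is exactly the paper's decomposition \eqref{eq:tech.lemma.1.decomp.prec}, obtained with the same two observations (measurability for $\lambda_i\leq\lambda$, tower property for $\lambda_i>\lambda$), and the conclusion is then transferred from \eqref{eq:GFF.Markov.independence} by linearity and Gaussianity just as in the paper, which merely leaves the pairwise-to-joint independence step implicit.
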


    \begin{proof}
        Let $v\in V_N$ and $\lambda < \lambda'$.
        By Definition \ref{def:IGFF} of the field $\psi$ and its conditional expectation, we have
        \begin{equation}\label{eq:tech.lemma.1.decomp.prec}
            \psi_v(\lambda) = \hspace{-2mm} \sum_{1 \leq i \leq M} \hspace{-2mm} \sigma_i \hspace{0.5mm}\esp{}{\nabla \phi_v(\lambda_i) \nvert \F_{\partial [v]_{\lambda} \cup [v]_{\lambda}^c}} = \hspace{-2mm} \sum_{\substack{1 \leq i \leq M : \\ \lambda_{i-1} < \lambda}} \hspace{-2mm} \sigma_i \hspace{0.5mm} (\phi_v(\lambda \wedge \lambda_i) - \phi_v(\lambda_{i-1}))\ .
        \end{equation}
        For the last equality, note that, when $\lambda_{i-1} < \lambda$, the increments $\phi_v(\lambda \wedge \lambda_i) - \phi_v(\lambda_{i-1})$ are linear combinations of variables inside the set $\partial [v]_{\lambda} \cup [v]_{\lambda}^c$ and, when $\lambda_i > \lambda$, we have $\mathbb{E}[\phi_v(\lambda_i) - \phi_v(\lambda \vee \lambda_{i-1}) \nvert \F_{\partial [v]_{\lambda} \cup [v]_{\lambda}^c}] = 0$ by the tower property of conditional expectations. By applying the same argument to $\psi_v(\lambda')$, we get
        \begin{equation}\label{eq:tech.lemma.1.decomp}
            \psi_v(\lambda') - \psi_v(\lambda) = \hspace{-11mm} \sum_{\substack{1 \leq i \leq M : \\ \lambda \leq \lambda_{i-1} < \lambda' \text{ or } \lambda < \lambda_i \leq \lambda' \\ \text{or } \lambda_{i-1} \leq \lambda < \lambda' \leq \lambda_i}} \hspace{-11.5mm} \sigma_i \hspace{0.5mm}(\phi_v(\lambda' \wedge \lambda_i) - \phi_v(\lambda \vee \lambda_{i-1}))\ .
        \end{equation}
        The conclusion of the lemma follows from \eqref{eq:GFF.Markov.independence}.
    \end{proof}

    In the remainder of this section, we always assume, without loss of generality, that $N = 2^n$ for some $n\in \N$ and $\lambda n, \lambda'n, \lambda_i n \in \N_0$ for all $i\in \{0,...,M\}$.

    \begin{lemma}\label{lem:tech.lemma.1}
        Let $\delta\in (0,1/2]$ and $\lambda_{i-1} \leq \lambda < \lambda' \leq \lambda_i$ for some $i\in \{1,...,M\}$, then
        \vspace{-3mm}
        \begin{equation}\label{eq:tech.lemma.1}
            -C_1(\delta, \sigma_i) \leq \var{}{\psi_v(\lambda') - \psi_v(\lambda)} - (\lambda' - \lambda) \sigma_i^2 \log N \leq C_2(\sigma_i)
        \end{equation}
        for all $v\in V_N^{\delta}$ and $N$ large enough depending on $\delta$.
        The constant $C_1$ only depends on $\delta$ when $\lambda = 0$.
    \end{lemma}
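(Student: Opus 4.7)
My plan is to reduce the claim to a standard Green function computation. The hypothesis $\lambda_{i-1} \leq \lambda < \lambda' \leq \lambda_i$ ensures that no breakpoint $\lambda_j$ lies in the open interval $(\lambda,\lambda')$, so that in the decomposition \eqref{eq:tech.lemma.1.decomp} only the index $j=i$ survives (via the third clause, with $\lambda_{i-1}\leq \lambda<\lambda'\leq \lambda_i$). Therefore
\begin{equation*}
    \psi_v(\lambda') - \psi_v(\lambda) = \sigma_i \bigl(\phi_v(\lambda') - \phi_v(\lambda)\bigr),
\end{equation*}
and the claim reduces to proving
\begin{equation*}
    -C_1(\delta) \leq \var{}{\phi_v(\lambda') - \phi_v(\lambda)} - (\lambda'-\lambda) \log N \leq C_2
\end{equation*}
for the ordinary GFF, with $C_1$ depending on $\delta$ only when $\lambda=0$.

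Next I would exploit the martingale structure of $\lambda\mapsto \phi_v(\lambda)$, together with the Markov property \eqref{eq:GFF.Markov}: since $\phi_v(\mu) = \mathbb{E}[\phi_v\nvert \F_{\partial [v]_\mu}]$, the orthogonal decomposition yields $\var{}{\phi_v} - \var{}{\phi_v(\mu)} = \var{}{\phi_v - \phi_v(\mu)}$, and a second application of the Markov property identifies $\phi_v - \phi_v(\mu)$ with the Dirichlet GFF on $[v]_\mu^o$ evaluated at $v$, whose variance is $G_{[v]_\mu}(v,v)$. Subtracting the identities for $\mu=\lambda$ and $\mu=\lambda'$ gives
\begin{equation*}
    \var{}{\phi_v(\lambda') - \phi_v(\lambda)} = G_{[v]_\lambda}(v,v) - G_{[v]_{\lambda'}}(v,v).
\end{equation*}

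It then remains to invoke standard estimates (e.g.\ Lemma~1 in \citet{MR1880237}, derived from the potential kernel of the simple random walk) which, under our $\pi/2$ normalization, state that for any square box $B$ of side $L$ and any $v\in B$, $G_B(v,v) \leq \log L + C$ with an absolute constant, and that $G_B(v,v) \geq \log L - C'$ provided $v$ lies at distance of order $L$ from $\partial B$. For $v\in V_N^\delta$ and any fixed $\mu>0$, the box $[v]_\mu$ has side $N^{1-\mu}$ and, for $N$ large enough in terms of $\delta$, is not cut off by $\partial V_N$; hence $v$ is at its geometric center and both bounds combine to give $G_{[v]_\mu}(v,v) = (1-\mu)\log N + O(1)$ with a universal error.

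The one delicate case is $\lambda = 0$, where $[v]_0 = V_N$: the upper bound $G_{V_N}(v,v) \leq \log N + C$ remains universal (yielding an absolute $C_2$), but the matching lower bound $G_{V_N}(v,v) \geq \log N - C(\delta)$ requires a quantitative separation $\mathrm{dist}(v,\partial V_N) \geq \delta N$, and that is exactly where the $\delta$-dependence of $C_1$ enters. Collecting the two Green function estimates, subtracting, and multiplying by $\sigma_i^2$ gives \eqref{eq:tech.lemma.1}. The only technical obstacle is bookkeeping the uniformity: the upper bound on $G_B(v,v)$ is insensitive to the location of $v$ in $B$, while sharp lower bounds are not, so the asymmetry in the dependence of $C_1$ and $C_2$ on $\delta$ reflects precisely which of these two estimates is being used on which side.
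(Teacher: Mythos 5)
Your argument is correct and follows essentially the same route as the paper: reduce to $\psi_v(\lambda')-\psi_v(\lambda)=\sigma_i(\phi_v(\lambda')-\phi_v(\lambda))$, express the variance as $\sigma_i^2\bigl(G_{[v]_\lambda}(v,v)-G_{[v]_{\lambda'}}(v,v)\bigr)$ via the Markov property and conditional variance formula, and then control each Green function through the potential-kernel asymptotics (Lawler), with the $\delta$-dependence entering only through the lower bound when $\lambda=0$ and the $\lambda'=1$ case handled trivially. The paper just phrases the final step as bounding the difference of boundary averages of $a(\cdot)$ rather than citing box estimates for $G_B(v,v)$ separately, which is the same computation.
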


    \begin{proof}
        The Markov property \eqref{eq:GFF.Markov} yields $\esp{}{\phi_v - \phi_v(\lambda) \nvert \F_{\partial [v]_{\lambda'}}} = \phi_v(\lambda') - \phi_v(\lambda)$.
        Using the conditional variance formula and $\var{}{X \nvert \F} \circeq \esp{}{(X - \esp{}{X \nvert \F})^2 \nvert \F}$, we can compute the variance of \eqref{eq:tech.lemma.1.decomp} in the special case $\lambda_{i-1} \leq \lambda < \lambda' \leq \lambda_i$ :
        \begin{align}\label{eq:tech.lemma.1.green.function.difference.before}
            \var{}{\psi_v(\lambda') - \psi_v(\lambda)}
            &= \sigma_i^2 ~\var{}{\esp{}{\phi_v - \phi_v(\lambda) \nvert \F_{\partial [v]_{\lambda'}}}} \notag \\
            &= \sigma_i^2 \left(\var{}{\phi_v - \phi_v(\lambda)} - \esp{}{\var{}{\phi_v - \phi_v(\lambda) \nvert \F_{\partial [v]_{\lambda'}}}}\right) \notag \\
            &= \sigma_i^2 \left(\var{}{\phi_v - \phi_v(\lambda)} - \var{}{\phi_v - \phi_v(\lambda')}\right).
        \end{align}
        But, it is well known that $\{\phi_u - \esp{}{\phi_u \nvert \F_{\partial B}}\}_{u\in B}$ is a GFF when $B\subseteq \Z^2$ is a finite box, see e.g. \citet{Zeitouni2014ln}.
        Simply choose $B = [v]_s, ~s=\lambda,\lambda'$, in \eqref{eq:tech.lemma.1.green.function.difference.before}, then by the variance definition in \eqref{eqn: G},
        \begin{equation}\label{eq:tech.lemma.1.green.function.difference}
            \var{}{\psi_v(\lambda') - \psi_v(\lambda)} = \sigma_i^2 \hspace{0.5mm}(G_{[v]_{\lambda}}(v,v) - G_{[v]_{\lambda'}}(v,v))\ .
        \end{equation}
        Using standard estimates for the discrete Green function, we can now evaluate the last expression.
        For every finite box $B\subseteq \Z^2$, Proposition 1.6.3 of \citet{MR1117680} shows that (keeping in mind our choice of normalization by $\pi/2$ in \eqref{eqn: G}) :
        \begin{equation}\label{eq:lawler.green.function}
            G_B(x,y) = \left[\sum_{z\in \partial B} \probw{x}{W_{\tau_{\partial B}} = z} a(z - y)\right] - a(y - x), \quad x,y\in B,
        \end{equation}
        where
        \begin{equation}\label{eq:lawler.estime.noyau.potentiel.1}
            a(w) =
            \left\{
            \begin{array}{ll}
                \log(\|w\|_2) + \text{const.} + O(\|w\|_2^{-2}) ~&\mbox{if } w\in \Z^2\backslash\{\boldsymbol{0}\} \\
                0 ~&\mbox{if } w = \boldsymbol{0}
            \end{array}
            \right.
        \end{equation}
        and $\mathscr{P}_x$ is the law of the simple random walk starting at $x\in \Z^2$. Using \eqref{eq:lawler.green.function}, we can rewrite the difference of Green functions in \eqref{eq:tech.lemma.1.green.function.difference} as
        \begin{equation}\label{eq:lawler.green.function.difference}
            \sum_{z\in \partial [v]_{\lambda}} \hspace{-2mm}\probw{v}{W_{\tau_{\partial [v]_{\lambda}}} = z} a(z - v) ~- \hspace{-2mm}\sum_{z\in \partial [v]_{\lambda'}} \hspace{-2mm}\probw{v}{W_{\tau_{\partial [v]_{\lambda'}}} = z} a(z - v)\ .
        \end{equation}
        When $\lambda' = 1$, we have $\|z - v\|_2 = 0$ for $z\in \partial [v]_{\lambda'}$.
        Otherwise, we assumed $v\in V_N^{\delta}$, so take $N$ large enough (depending on $\delta$) that $[v]_{\lambda'}$ is not cut off by $\partial V_N$.
        We have $\|z - v\|_2 \leq \sqrt{2} N^{1-\lambda}$ for $z\in \partial [v]_{\lambda}$ in general and $\|z - v\|_2 \geq \frac{1}{2} N^{1-\lambda'}$ for $z\in \partial [v]_{\lambda'}$ when $\lambda' \neq 1$.
        We deduce the following bound on the variance in \eqref{eq:tech.lemma.1.green.function.difference} :
        \begin{align*}
            \max_{v\in V_N^{\delta}} \var{}{\psi_v(\lambda') - \psi_v(\lambda)}
            &\leq \sigma_i^2 ((1 - \lambda) - (1 - \lambda')) \log N + \sigma_i^2 C \\
            &= (\lambda' - \lambda) \sigma_i^2 \log N + C_2(\sigma_i)\ .
        \end{align*}
        Similarly, we have $\|z - v\|_2 \geq \delta N$ for $z\in \partial [v]_{\lambda}$ when $\lambda = 0$. Otherwise, take $N$ large enough (depending on $\delta$) that $[v]_{\lambda}$ is not cut off by the boundary of $V_N$. We have $\|z - v\|_2 \geq \frac{1}{2} N^{1-\lambda}$ for $z\in \partial [v]_{\lambda}$ when $\lambda \neq 0$ and $\|z - v\|_2 \leq \frac{1}{\sqrt{2}} N^{1-\lambda'}$ for $z\in \partial [v]_{\lambda'}$ in general. We deduce the following bound on the variance in \eqref{eq:tech.lemma.1.green.function.difference} :
        \begin{align*}
            \min_{v\in V_N^{\delta}} \var{}{\psi_v(\lambda') - \psi_v(\lambda)}
            &\geq \sigma_i^2 ((1 - \lambda) - (1 - \lambda')) \log N - \sigma_i^2 C(\delta) \\
            &= (\lambda' - \lambda) \sigma_i^2 \log N - C_1(\delta,\sigma_i)\ .
        \end{align*}
        This ends the proof of the lemma.
    \end{proof}

    Since the upper bound in Lemma \ref{lem:tech.lemma.1} is only valid for $N$ large enough depending on $\delta$, we cannot immediately conclude that it holds for all $v\in V_N$. We show in the next lemma how to extend the bound.

    \begin{lemma}\label{lem:tech.lemma.2}
        Let $\lambda_{i-1} \leq \lambda < \lambda' \leq \lambda_i$ for a certain $i\in \{1,...,M\}$, then
        \begin{equation}\label{eq:tech.lemma.2}
            \max_{v\in V_N} \var{}{\psi_v(\lambda') - \psi_v(\lambda)} \leq (\lambda' - \lambda) \sigma_i^2 \log N + C(\sigma_i)
        \end{equation}
        for $N$ large enough.
    \end{lemma}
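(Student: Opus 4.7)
The plan is to reuse the Green-function identity from the proof of Lemma \ref{lem:tech.lemma.1}, which holds for every $v\in V_N$ without modification, and to control the resulting difference of Green functions when the neighborhoods are cut by $\partial V_N$. The decomposition \eqref{eq:tech.lemma.1.decomp} together with the fact that $\{\phi_u - \esp{}{\phi_u \nvert \F_{\partial B}}\}_{u\in B}$ is a centered GFF on any finite box $B\subseteq \Z^2$ yields
\begin{equation*}
    \var{}{\psi_v(\lambda') - \psi_v(\lambda)} = \sigma_i^2 \bigl(G_{[v]_\lambda}(v,v) - G_{[v]_{\lambda'}}(v,v)\bigr)
\end{equation*}
for every $v\in V_N$, regardless of whether the neighborhoods are cut off. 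It therefore suffices to bound this difference of Green functions by $(\lambda'-\lambda)\log N + C$ uniformly in $v$.

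First, I would establish the uniform upper bound $G_{[v]_\lambda}(v,v) \leq (1-\lambda)\log N + C$ for every $v\in V_N$: cutting by $\partial V_N$ only shrinks $[v]_\lambda$, so every $z\in \partial [v]_\lambda$ still satisfies $\|z-v\|_2 \leq \sqrt{2}\, N^{1-\lambda}$, and \eqref{eq:lawler.green.function}--\eqref{eq:lawler.estime.noyau.potentiel.1} immediately give the bound. For a matching lower bound on $G_{[v]_{\lambda'}}(v,v)$, I would set $d \circeq \min_{z\in\partial V_N} \|v-z\|_2$ and $r \circeq \min(\tfrac{1}{2} N^{1-\lambda'}, d)$, and note that the $\ell^\infty$-box of radius $r$ centered at $v$ is contained in $[v]_{\lambda'}$. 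Domain monotonicity of the Green function, combined with the direct computation of $G_B(v,v)$ via \eqref{eq:lawler.green.function} for this centered box (where $\|z-v\|_2\in [r,\sqrt{2}r]$), then gives
\begin{equation*}
    G_{[v]_{\lambda'}}(v,v) \geq \log r - C \geq \log\bigl(\min(N^{1-\lambda'}, 2d)\bigr) - C.
\end{equation*}

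A simple case split on $d$ completes the argument. If $d \geq \tfrac{1}{2} N^{1-\lambda'}$, so that $[v]_{\lambda'}$ is effectively not cut off, the two bounds above subtract to exactly $(1-\lambda)\log N - (1-\lambda')\log N + C = (\lambda'-\lambda)\log N + C$, as desired. If instead $d < \tfrac{1}{2} N^{1-\lambda'}$, the diameter bound on $G_{[v]_\lambda}(v,v)$ is too crude and I would replace it by the sharper estimate
\begin{equation*}
    G_{[v]_\lambda}(v,v) \leq G_{V_N}(v,v) \leq \log d + C,
\end{equation*}
the classical upper bound on the variance of the two-dimensional discrete GFF at a point at distance $d$ from the Dirichlet boundary. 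Combined with the centered-box lower bound $G_{[v]_{\lambda'}}(v,v) \geq \log d - C$, this yields a difference of only $O(1)$, which is trivially below $(\lambda'-\lambda)\log N + C$.

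The one step that genuinely goes beyond Lemma \ref{lem:tech.lemma.1} is the boundary estimate $G_{V_N}(v,v) \leq \log d + C$, and I expect this to be the main obstacle. It can be derived by applying Lawler's representation \eqref{eq:lawler.green.function} to $B = V_N$ and using a Beurling-type bound to the effect that the harmonic measure of $\partial V_N$ seen from $v$ concentrates within distance $O(d)$ of $v$; alternatively, it can be invoked directly from Section~1.6 of \citet{MR1117680}. Apart from this single input, the argument is a routine extension of the interior case by domain monotonicity.
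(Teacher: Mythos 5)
Your proof is correct, but it takes a genuinely different route from the paper. The paper treats points near $\partial V_N$ by a translation/monotonicity argument: it writes $G_{[v]_{\lambda}}(v,v)-G_{[v]_{\lambda'}}(v,v)$ as an expected number of visits to the starting point between two hitting times and shows that translating $v$, together with its neighborhoods, to the center $u$ of $V_N$ can only increase this quantity, thereby reducing the boundary case to the interior computation of Lemma \ref{lem:tech.lemma.1}; a separate estimate \eqref{eq:lem:tech.lemma.2.max.var.lambda.0} handles $\lambda=0$. You instead estimate the two Green functions separately: the diameter bound gives $G_{[v]_{\lambda}}(v,v)\leq(1-\lambda)\log N+C$ for every $v$, a centered box of radius $r=\min(\tfrac12 N^{1-\lambda'},d)$ plus domain monotonicity gives $G_{[v]_{\lambda'}}(v,v)\geq \log r-C$, and the case $d<\tfrac12 N^{1-\lambda'}$ is closed by the classical near-boundary bound $G_{V_N}(v,v)\leq \log d+C$, which you rightly flag as the one input beyond Lemma \ref{lem:tech.lemma.1}. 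That bound is indeed true for the square and provable as you suggest, though your phrasing is slightly off: the harmonic measure from $v$ does not concentrate within distance $O(d)$; rather the exit point has a power-law tail, $\probw{v}{\|W_{\tau_{\partial V_N}}-v\|_2\geq R}\leq C\sqrt{d/R}$ by the discrete Beurling estimate (applicable because $\partial V_N$ within distance $R$ of $v$ contains a connected set reaching from distance $d$ to distance of order $R$), and it is this tail, summed against the logarithm in \eqref{eq:lawler.estime.noyau.potentiel.1}, that yields $\espw{v}{a(W_{\tau_{\partial V_N}}-v)}\leq\log d+O(1)$; such a bound would fail for general domains, so the geometry of $V_N$ matters here. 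Also dispose of $v\in\partial V_N$ (where $\psi_v=0$) and of small $d$ by absorbing into constants. The trade-off between the two arguments: the paper's coupling stays entirely within the tools it has already set up (Lawler's representation, the potential kernel, hitting-time decompositions), while your route is more direct and symmetric in the two scales but imports a Beurling-type estimate that is standard yet external to the paper.
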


    \begin{proof}
        When $v\in \partial V_N$, the bound is trivial because $\psi_v = 0$.
        Therefore, let $v\in V_N^o$.
        To obtain the upper bound on the difference of Green functions in \eqref{eq:tech.lemma.1.green.function.difference}, we only used the fact that \hspace{-0.2mm}$[v]_{\lambda'}$ was not cut off by \hspace{-0.2mm}$\partial V_N$ \hspace{-0.2mm}for \hspace{-0.2mm}$N$ \hspace{-0.2mm}large enough depending on $\delta$. Hence, we only need to show that when $[v]_{\lambda'}$ is cut off, there exists $u\in V_N^o$ such that $[u]_{\lambda'}$ is not cut off and for which
        \begin{equation}\label{eq:lem:tech.lemma.2.diff.green.functions.inequality}
            G_{[v]_{\lambda}}(v,v) - G_{[v]_{\lambda'}}(v,v) \leq G_{[u]_{\lambda}}(u,u) - G_{[u]_{\lambda'}}(u,u) + \tilde{C}(\sigma_i) \ .
        \end{equation}
        Assume that $[v]_{\lambda'}$ is cut off by $\partial V_N$ and choose $u$ to be the center of $V_N$. Clearly, the neighborhood $[u]_{\lambda'}$ is not cut off by the boundary of $V_N$.
        When $\lambda' = 1$, inequality \eqref{eq:lem:tech.lemma.2.diff.green.functions.inequality} is trivial because $G_{[v]_{\lambda'}}(v,v) = G_{[u]_{\lambda'}}(u,u) = 0$ and $G_{[v]_{\lambda}}(v,v) \leq G_{[u]_{\lambda}}(u,u)$ since $[v]_{\lambda}$ is cut off and $[u]_{\lambda}$ is not.
        Now, assume $\lambda' < 1$.
        Denote $\theta(x) \circeq x + u - v$ the translation function that moves $v$ to $u$, see Figure \ref{fig:MAS.u.v.barriere}.
        \vspace{-1mm}
        \begin{figure}[ht]
            \centering
            \includegraphics[scale=0.50]{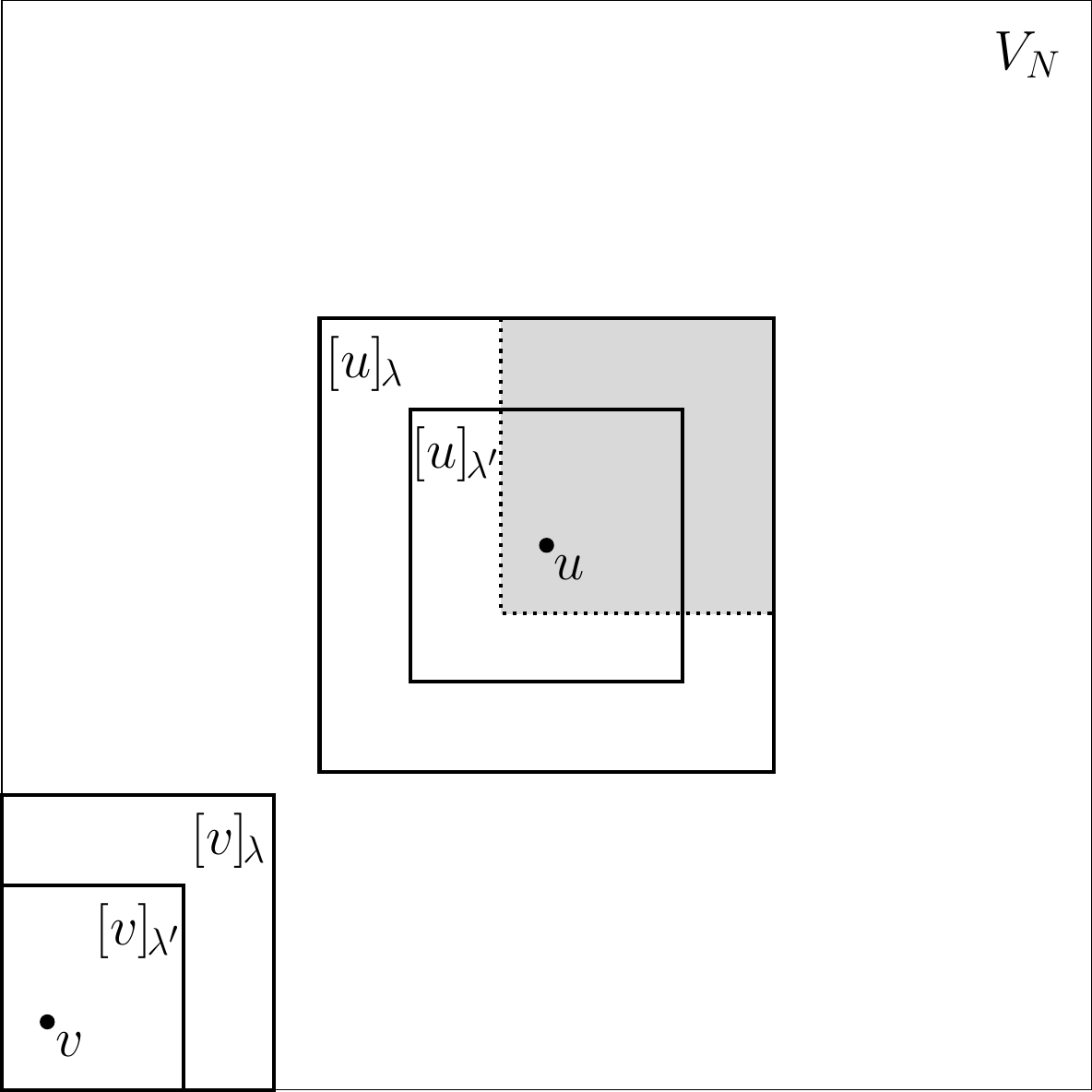}
            \captionsetup{width=0.85\textwidth}
            \caption{The grey area $\theta([v]_{\lambda})$ is the translation of $[v]_{\lambda}$.}
            \label{fig:MAS.u.v.barriere}
        \end{figure}

        \vspace{-1mm}
        For the rest of the proof, redefine $[v]_0$ as the square box of side length $N$ centered at $v$ that has been cut off by $\partial V_N$. Since $\theta([v]_{\lambda}) \subseteq [u]_{\lambda}$, we have
        \begin{align*}
            G_{[v]_{\lambda}}(v,v) - G_{[v]_{\lambda'}}(v,v)
            &= \frac{\pi}{2} \cdot \espw{v}{\sum_{k=\tau_{\partial [v]_{\lambda'}}}^{\tau_{\partial [v]_{\lambda}} - 1} 1_{\{W_k = v\}} 1_{\{\tau_{\partial [v]_{\lambda'}} < \tau_{\partial [v]_{\lambda'} \cap \partial V_N}\}}} \notag \\
            &= \frac{\pi}{2} \cdot \espw{u}{\sum_{k=\tau_{\partial [u]_{\lambda'}}}^{\tau_{\partial \theta([v]_{\lambda})} - 1} 1_{\{W_k = u\}} 1_{\{\tau_{\partial [u]_{\lambda'}} < \tau_{\theta(\partial [v]_{\lambda'} \cap \partial V_N)}\}}} \\
            &\leq \frac{\pi}{2} \cdot \espw{u}{\sum_{k=\tau_{\partial [u]_{\lambda'}}}^{\tau_{\partial [u]_{\lambda}} - 1} 1_{\{W_k = u\}}} = G_{[u]_{\lambda}}(u,u) - G_{[u]_{\lambda'}}(u,u)\ .
        \end{align*}
        This proves \eqref{eq:lem:tech.lemma.2.diff.green.functions.inequality} when $\lambda \neq 0$. Since $[v]_0 \circeq V_N$ throughout the article and we defined $[v]_0$ differently in this proof, it remains to show that
        \begin{equation}\label{eq:lem:tech.lemma.2.max.var.lambda.0}
            \max_{v\in V_N} G_{V_N}(v,v) - G_{[v]_0}(v,v) \leq \tilde{C}(\sigma_i)
        \end{equation}
        for \eqref{eq:lem:tech.lemma.2.diff.green.functions.inequality} to be true when $\lambda = 0$.
        By the strong Markov property and \eqref{eq:lawler.green.function} :
        \begin{align*}
            &G_{V_N}(v,v) - G_{[v]_0}(v,v)
            = \hspace{-3mm} \sum_{z\in \partial [v]_0 \cap V_N^o} \hspace{-4mm} \probw{v}{W_{\tau_{\partial [v]_0}} = z} G_{V_N}(z,v) \\
            &\quad= \hspace{-3mm} \sum_{z\in \partial [v]_0 \cap V_N^o} \hspace{-4mm} \probw{v}{W_{\tau_{\partial [v]_0}} = z} \hspace{-1mm}\sum_{w\in \partial V_N} \hspace{-1mm} \probw{z}{W_{\tau_{\partial V_N}} = w} (a(w - v) - a(v - z))\ .
        \end{align*}
        But $\|w - v\|_2 \leq \sqrt{2} N$ for all $w\in \partial V_N$ and $\|v - z\|_2 \geq \frac{1}{2} N$ for all $z\in \partial [v]_0 \cap V_N^o$. We get the desired conclusion using \eqref{eq:lawler.estime.noyau.potentiel.1}.
    \end{proof}

    In \hspace{-0.2mm}order \hspace{-0.2mm}to \hspace{-0.2mm}approximate \hspace{-0.2mm}the \hspace{-0.2mm}branching \hspace{-0.2mm}structure \hspace{-0.2mm}of \hspace{-0.2mm}the \hspace{-0.2mm}$(\boldsymbol{\sigma},\boldsymbol{\lambda})$-GFF \hspace{-0.2mm}in \hspace{-0.2mm}Lemma \hspace{-0.2mm}\ref{lem:IGFF.order1.upper.bound} and Lemma \ref{lem:IGFF.high.points.upper.bound}, we need to show that the variance of $\psi_v(\lambda) - \psi_{v_{\lambda}}(\lambda)$ is bounded by a constant, where $v_{\lambda}$ denotes any representative in $R_{\lambda}$ that is closest to $v$. Our final goal here is to show Lemma \ref{lem:tech.lemma.3}. We start by proving a more general version of Lemma 12 found in \citet{MR1880237}. We define
    \begin{equation*}
        \phi_v(A) \circeq \esp{}{\phi_v \nvert \F_{\partial (A \cap V_N)}}
    \end{equation*}
    and $d(z,A) \circeq \min_{w\in A} \|z - w\|_2$ for any non-empty set $A\subseteq \Z^2$.

    \begin{lemma}\label{lem:CLG.lemme12.BDG2001}
        Let $B\subseteq \Z^2$ be a square box of width smaller or equal to $N/2$ such that $B \cap V_N \neq \emptyset$. Moreover, let $0 \leq \eta < 1$ and $L\in \{1,2,...,N/4\}$, then there exists a constant $C = C(\eta) > 0$ such that
        \begin{equation}\label{eq:lem:CLG.lemme12.BDG2001.bound.lemma}
            \max_{\substack{u,v\in B \cap V_N \\ d(u,\partial B) = L \\ \|u - v\|_2 \leq \eta L}} \hspace{-3mm}\var{}{\phi_u(B) - \phi_v(B)} \leq C\ .
        \end{equation}
    \end{lemma}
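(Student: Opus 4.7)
The plan is to reduce the estimate, via the Markov decomposition of the GFF at the subdomain $B \cap V_N$, to a difference of two Green-function expressions in which the logarithmic singularity at the diagonal cancels exactly, and then to control what remains using the potential-kernel estimate \eqref{eq:lawler.estime.noyau.potentiel.1}.

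First I would invoke the Markov property to write $\phi_w = \phi_w(B) + \phi^B_w$, where $\phi^B_w \circeq \phi_w - \phi_w(B)$ is an independent GFF on $B \cap V_N$ with Dirichlet boundary conditions on $\partial(B \cap V_N)$. Independence of the two summands yields
\begin{equation*}
    \var{}{\phi_u(B) - \phi_v(B)} = \var{}{\phi_u - \phi_v} - \var{}{\phi^B_u - \phi^B_v},
\end{equation*}
and both variances on the right expand as $G_D(u,u) + G_D(v,v) - 2 G_D(u,v)$ with $D = V_N$ or $D = B \cap V_N$ respectively.

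Next, Lawler's identity \eqref{eq:lawler.green.function} gives $G_D(x, y) = h_D(x,y) - a(x-y)$ with $h_D(x, y) \circeq \espw{x}{a(W_{\tau_{\partial D}} - y)}$. Since $a(0) = 0$ and the singular term $-2 a(u - v)$ appears with the same coefficient in both variances, subtraction cancels the divergence at $u = v$, and one is left to bound
\begin{equation*}
    \var{}{\phi_u(B) - \phi_v(B)} = f_{V_N}(u, v) - f_{B \cap V_N}(u, v), \quad f_D(u, v) \circeq h_D(u,u) + h_D(v,v) - 2 h_D(u,v),
\end{equation*}
by a constant depending only on $\eta$. Using the reciprocity $h_D(u,v) = h_D(v,u)$ (equivalent to $G_D(u,v) = G_D(v,u)$), each $f_D$ rewrites as a difference of harmonic-measure expectations,
\begin{equation*}
    f_D(u, v) = \espw{u}{g(W_{\tau_{\partial D}})} - \espw{v}{g(W_{\tau_{\partial D}})}, \qquad g(z) \circeq a(z - u) - a(z - v).
\end{equation*}

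The bound $|f_D(u,v)| \le C(\eta)$ then follows from \eqref{eq:lawler.estime.noyau.potentiel.1}, which gives $g(z) = \log(\|z - u\|_2 / \|z - v\|_2) + O(\min\{\|z-u\|_2, \|z-v\|_2\}^{-2})$. Whenever the exit point is at distance at least $L$ from $u$, the log-ratio is controlled by $\log(1/(1-\eta))$: for $D = B \cap V_N$ with $B \subseteq V_N$ the exit lies on $\partial B$, so this bound is immediate. The main obstacle I anticipate is exit points lying on $\partial V_N$---arising for $D = V_N$ always, and for $D = B \cap V_N$ when $B$ extends outside $V_N$---since the walk may then exit very close to $u$ or $v$. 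I plan to dispatch this by a coupling argument in the spirit of Lemma 12 of \citet{MR1880237}: couple the two random walks started at $u$ and $v$ (via a mirror coupling, say) so that they coalesce before exiting the relevant domain with probability at least $1 - C(\eta)$. On the coupling event the contributions to $\espw{u}{g} - \espw{v}{g}$ cancel; on its complement, a direct bound of $g$ against the harmonic measure together with $\|u-v\|_2 \le \eta L$ produces the required $C(\eta)$ estimate.
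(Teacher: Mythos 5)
Your reduction is correct, and for the ``interior'' situation it is essentially the paper's own argument: the cancellation of the $a(u-v)$ singularity after writing both variances through \eqref{eq:lawler.green.function}, followed by the bound $|g(z)|\leq \log\frac{1}{1-\eta}+O(\lceil(1-\eta)L\rceil^{-2})$ for exit points $z$ with $\|z-u\|_2\geq L$, is exactly what the paper does in its Case 2 ($d(u,\partial V_N)>L$), where one conditions on the exit point of $B\cap V_N$ and estimates $a(v-z)-a(u-z)$.

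The gap is the boundary case, and it is not a technicality that a coupling can absorb: your intermediate claim $|f_D(u,v)|\leq C(\eta)$ is false. Take $L=N/4$, let $u\in V_N^o$ be adjacent to the middle of the bottom edge of $V_N$, let $B$ be the square box of width $N/2$ centered at $u$ (so $d(u,\partial B)=L$), and let $v\circeq u+(0,\lfloor \eta L\rfloor)$; all hypotheses of the lemma hold. Since the walk from $u$ exits $V_N$ at its very first step with probability at least $1/4$, we have $G_{V_N}(u,u)=O(1)$, while $G_{V_N}(v,v)\leq \log N+C$, $G_{V_N}(u,v)\geq 0$ and $a(u-v)\geq \log N-C(\eta)$, so
\begin{equation*}
    f_{V_N}(u,v)\;=\;\var{}{\phi_u-\phi_v}-2a(u-v)\;\leq\; -\log N+C(\eta)\;\longrightarrow\;-\infty .
\end{equation*}
In your harmonic-measure language: the walk from $u$ exits within $O(1)$ of $u$ with probability bounded below (indeed with overwhelming probability before it could meet the walk from $v$, which starts at distance $\asymp \eta L$), and at such exit points $g\approx-\log(\eta L)$; so the mirror coupling fails with non-vanishing probability and on the failure event $|g|\asymp\log N$, whence ``a direct bound of $g$ against the harmonic measure'' gives $O(\log N)$, not $O(1)$. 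No argument can do better, because $f_{V_N}$ itself diverges; the boundedness of the variance in this regime comes from a cancellation \emph{between the two domains}: $\partial(B\cap V_N)$ and $\partial V_N$ share precisely the dangerous stretch of boundary near $u$, so the divergent contributions occur in both $f_{V_N}$ and $f_{B\cap V_N}$ and cancel in the difference, a mechanism your plan (which bounds the two terms separately) never uses. This is exactly why the paper's proof splits off the case $d(u,\partial V_N)\leq L$ and treats it by a different route: discard the cross terms by monotonicity of Green functions in the domain, translate $u,v,B$ away from the boundary to reduce to $d(u,\partial V_N)=L$, and compare $G(u,u)$ and $G(v,v)$ with the half-plane Green function $G_{\mathcal{H}}(z,z)=a(z-\bar z)$, so that the two $\log L$ terms cancel and only $\log(1\pm\eta)$ survives. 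To complete your proof you would need either such a case distinction, or a joint estimate of $f_{V_N}-f_{B\cap V_N}$ (e.g.\ applying the strong Markov property at $\tau_{\partial(B\cap V_N)}$ so that exits through the shared part of the boundary cancel, which then still requires bounding $|G_{V_N}(z,u)-G_{V_N}(z,v)|$ for $z$ at distance at least $L$ from $u$), rather than bounding each $f_D$ on its own.
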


    \begin{proof}
        Let $u,v\in B \cap V_N$ be such that $d(u,\partial B) = L$ and $\|u - v\|_2 \leq \eta L$. Denote $\overbar{B} \circeq B \cap V_N$. Using the conditional variance formula as in \eqref{eq:tech.lemma.1.green.function.difference.before}, we have
        \begin{align}\label{eq:lem:CLG.lemme12.BDG2001.equation.1}
            \var{}{\phi_u(B) - \phi_v(B)}
            &= \var{}{\esp{}{\phi_u - \phi_v \nvert \mathcal{F}_{\partial \overbar{B}}}} \notag \\
            &= \var{}{\phi_u - \phi_v} - \esp{}{\var{}{\phi_u - \phi_v \nvert \mathcal{F}_{\partial \overbar{B}}}} \notag \\
            &= (G_{V_N}(u,u) - G_{V_N}(u,v) + G_{V_N}(v,v) - G_{V_N}(v,u)) \notag \\
            &\quad - (G_{\overbar{B}}(u,u) - G_{\overbar{B}}(u,v) + G_{\overbar{B}}(v,v) - G_{\overbar{B}}(v,u))\ .
        \end{align}
        For this proof, redefine $[u]_0$ as the square box of side length $N$ centered at $u$ that has been cut off by $\partial V_N$. From \eqref{eq:lem:tech.lemma.2.max.var.lambda.0}, we know $\max_{u\in V_N} G_{V_N}(u,u) - G_{[u]_0}(u,u) \leq C$. Using the exact same method, we can also easily show that
        \begin{equation*}
            \max_{\substack{v\in V_N \\ \|u - v\|_2 \leq \eta N/2}} \hspace{-4mm}G_{V_N}(v,v) - G_{[u]_0}(v,v) \leq C(\eta)
        \end{equation*}
        because we would have $\|v - z\|_2 \geq (1 - \eta) N/2$ for all $z\in \partial [u]_0 \cap V_N^o$ in the reasoning below \eqref{eq:lem:tech.lemma.2.max.var.lambda.0}, where $\eta < 1$ by hypothesis. Finally, $-G_{V_N}(u,v) \leq -G_{[u]_0}(u,v)$, so proving \eqref{eq:lem:CLG.lemme12.BDG2001.bound.lemma} boils down to the proof of the following inequality :
        \vspace{0.5mm}
        \begin{equation}\label{eq:lem:CLG.lemme12.BDG2001.to.show}
            (\clubsuit) \circeq
            \left\{\hspace{-1mm}
            \begin{array}{ll}
                (G_{[u]_0}(u,u) - G_{[u]_0}(u,v)) - (G_{\overbar{B}}(u,u) - G_{\overbar{B}}(u,v)) \\
                + \hspace{0.5mm}(G_{[u]_0}(v,v) - G_{[u]_0}(v,u)) - (G_{\overbar{B}}(v,v) - G_{\overbar{B}}(v,u))
            \end{array}\hspace{-1.5mm}
            \right\} \leq \tilde{C}(\eta)\hspace{0.5mm}.\vspace{1mm}
        \end{equation}
        To show \eqref{eq:lem:CLG.lemme12.BDG2001.to.show}, we consider two cases : $d(u,\partial V_N) \leq L$ and $d(u,\partial V_N) > L$.
        \vspace{1mm}
        \begin{equation*}
            \boxed{\text{Case 1 : } d(u,\partial V_N) \leq L}
        \end{equation*}
        \vspace{-2mm}

        Since $\overbar{B} \subseteq [u]_0$ (recall that $B$ is a square box of width smaller or equal to $N/2$ and contains $u$), then we always have
        \begin{equation}\label{eq:lem:CLG.lemme12.BDG2001.new.1}
            (\clubsuit) \leq (G_{[u]_0}(u,u) - G_{\overbar{B}}(u,u)) + (G_{[u]_0}(v,v) - G_{\overbar{B}}(v,v))\ .
        \end{equation}
        Note that the box $B$ is cut off by $\partial V_N$ in Case 1.
        By translating $u,v,B$ together in such a way that $u$ doesn't get closer to $\partial V_N$ with respect to both axes, each difference of Green functions in \eqref{eq:lem:CLG.lemme12.BDG2001.new.1} can only increase (see the argument below Figure \ref{fig:MAS.u.v.barriere}).
        Therefore, it is sufficient to bound \eqref{eq:lem:CLG.lemme12.BDG2001.new.1} when $d(u,\partial V_N) = L$.
        Assume $d(u,\partial V_N) = L$ for the rest of Case $1$.
        Since $d(u,\partial B) = L$ by hypothesis, we have $d(u,\partial \overbar{B}) = L$ and we get $d(v,\partial \overbar{B}) \geq \lceil (1 - \eta) L \rceil \geq 1$ by the triangle inequality.
        Consequently,
        \begin{equation}\label{eq:lem:CLG.lemme12.BDG2001.new.2}
            (\clubsuit) \leq G_{[u]_0}(u,u) + G_{[u]_0}(v,v) - 2 \log L + C(\eta)
        \end{equation}
        using \eqref{eq:lawler.green.function} and \eqref{eq:lawler.estime.noyau.potentiel.1}.

        By the symmetries of the square, we can assume, without loss of generality, that the minimum in $d(u,\partial V_N) = L$ is achieved on the bottom edge of $V_N$ (which lies on the $x$-axis). Define the half-space $\mathcal{H} \circeq \{z = (z_1,z_2)\in \Z^2 \nvert z_2 \geq 0\}$. Since we have $[u]_0 \subseteq \mathcal{H}$ and $d(v,\partial \mathcal{H}) \leq (1 + \eta) L$, by the triangle inequality, then
        \begin{equation}\label{eq:lem:CLG.lemme12.BDG2001.new.3}
            (\clubsuit) \leq 2 \hspace{-13.5mm} \max_{\substack{z\in \mathcal{H} \\ \lceil (1 - \eta) L \rceil \leq d(z,\partial \mathcal{H}) \leq (1+ \eta) L}} \hspace{-13.5mm} G_{\mathcal{H}}(z,z) - 2 \log L + C(\eta)\ .
        \end{equation}
        From Proposition 8.1.1 of \cite{MR2677157},
        \begin{equation}\label{eq:lem:CLG.lemme12.BDG2001.new.4}
            G_{\mathcal{H}}(z,z) = a(z - \bar{z}) \stackrel{\eqref{eq:lawler.estime.noyau.potentiel.1}}{=} \log(\|z - \bar{z}\|_2) + \text{const.} + O(\|z - \bar{z}\|_2^{-2})
        \end{equation}
        where $z = (z_1,z_2)$ and $\bar{z} \circeq (z_1,-z_2)$. The conclusion for Case 1 follows from \eqref{eq:lem:CLG.lemme12.BDG2001.new.3} because $2 \leq 2 \lceil (1 - \eta) L \rceil \leq \|z - \bar{z}\|_2 \leq 2 (1 + \eta) L$ in \eqref{eq:lem:CLG.lemme12.BDG2001.new.4}.

        \begin{equation*}
            \boxed{\text{Case 2 : } d(u,\partial V_N) > L}
        \end{equation*}
        \vspace{-1mm}

        For Case $2$, we follow the argument from \citet{MR1880237}. We give the details for convenience.
        For all $k\in \N_0$, define $[u]_0^k \subseteq \Z^2$ the square box of side length $2^k N$ centered at $u$ (not cut off by anything). For instance, $[u]_0 = [u]_0^0 \cap V_N$ in this proof. Note that $[u]_0 \subseteq [u]_0^1 \subseteq [u]_0^2 \subseteq ...$ and $[u]_0 \cup \bigcup_{k=1}^{\infty} [u]_0^k = \Z^2$, so
        \begin{align*}
            (\clubsuit)
            &\leq \left\{\hspace{-1mm}
            \begin{array}{ll}
                (G_{[u]_0}(u,u) - G_{[u]_0}(u,v)) - (G_{\overbar{B}}(u,u) - G_{\overbar{B}}(u,v)) \\
                + \hspace{0.5mm}(G_{[u]_0}(v,v) - G_{[u]_0}(v,u)) - (G_{\overbar{B}}(v,v) - G_{\overbar{B}}(v,u))
            \end{array}\hspace{-1.5mm}
            \right\} \notag \\
            &\quad+
            \sum_{k=1}^{\infty} \hspace{0.3mm}
            \left\{\hspace{-1mm}
            \begin{array}{ll}
                (G_{[u]_0^k}(u,u) - G_{[u]_0^k}(u,v)) - (G_{[u]_0^{k-1}}(u,u) - G_{[u]_0^{k-1}}(u,v)) \\
                + \hspace{0.5mm}(G_{[u]_0^k}(v,v) - G_{[u]_0^k}(v,u)) - (G_{[u]_0^{k-1}}(v,v) - G_{[u]_0^{k-1}}(v,u))
            \end{array}\hspace{-1.5mm}
            \right\} \\
            &= \frac{\pi}{2} \cdot \espw{u}{\sum_{k=\tau_{\partial \overbar{B}}}^{\infty} (1_{\{W_k = u\}} - 1_{\{W_k = v\}})} + \frac{\pi}{2} \cdot \espw{v}{\sum_{k=\tau_{\partial \overbar{B}}}^{\infty} (1_{\{W_k = v\}} - 1_{\{W_k = u\}})} \hspace{-1mm}.
        \end{align*}
        The inequality comes from the fact that each pair of braces in the infinite sum is equal to $\mathbb{V}_{[u]_0^k}(\mathbb{E}[\phi_u - \phi_v \nvert \F_{\partial [u]_0^{k-1}}]) \geq 0$ by steps analogous to \eqref{eq:lem:CLG.lemme12.BDG2001.equation.1}. The equality follows because the infinite sum is telescopic.

        By conditioning on the point $z\in \partial \overbar{B}$ where the simple random walk starting at $u$ or $v$ will be when hitting the boundary of $\overbar{B}$, and using the strong Markov property, we deduce
        \begin{align}\label{eq:lem:CLG.lemme12.BDG2001.clubsuit.last}
             (\clubsuit)
             &\leq \hspace{-1mm}\sum_{z\in \partial \overbar{B}} \hspace{-1mm} \left(\mathscr{P}_{u}(W_{\tau_{\partial \overbar{B}}} \hspace{-1mm} = \hspace{-0.5mm}z) \hspace{-0.2mm}- \hspace{-0.2mm}\mathscr{P}_{v}(W_{\tau_{\partial \overbar{B}}} \hspace{-1mm} = \hspace{-0.5mm}z)\right) \cdot \frac{\pi}{2} \cdot \mathscr{E}_z \hspace{-1mm}\left[\sum_{k=0}^{\infty} (1_{\{W_k = u\}} - 1_{\{W_k = v\}})\right] \notag \\
             &= \hspace{-1mm}\sum_{z\in \partial \overbar{B}} \hspace{-1mm} \left(\mathscr{P}_{u}(W_{\tau_{\partial \overbar{B}}} \hspace{-1mm} = \hspace{-0.5mm}z) \hspace{-0.2mm}- \hspace{-0.2mm}\mathscr{P}_{v}(W_{\tau_{\partial \overbar{B}}} \hspace{-1mm} = \hspace{-0.5mm}z)\right) \cdot (a(v - z) - a(u - z))
        \end{align}
        where `` $a$ '', the \textit{potential kernel} (see p.37 in \citet{MR1117680}), is defined by
        \begin{equation*}
            a(w) \circeq \frac{\pi}{2} \cdot \espw{\boldsymbol{0}}{\sum_{k=0}^{\infty} (1_{\{W_k = \boldsymbol{0}\}} - 1_{\{W_k = w\}})}.
        \end{equation*}
        Theorem 1.6.2 in \citet{MR1117680} shows that this is the same function as in \eqref{eq:lawler.estime.noyau.potentiel.1}. Therefore, we can evaluate \eqref{eq:lem:CLG.lemme12.BDG2001.clubsuit.last} :
        \begin{equation}\label{eq:lem:CLG.lemme12.BDG2001.equation.3}
            a(v - z) - a(u - z) = \log\left(\frac{\|v - z\|_2}{\|u - z\|_2}\right) + O(\|v - z\|_2^{-2}) - O(\|u - z\|_2^{-2})\ .
        \end{equation}
        By the triangle inequality, we have
        \begin{equation}\label{eq:lem:CLG.lemme12.BDG2001.equation.4}
            \log\left(1 - \frac{\|u - v\|_2}{\|u - z\|_2}\right) \leq \log\left(\frac{\|v - z\|_2}{\|u - z\|_2}\right) \leq \log\left(1 + \frac{\|u - v\|_2}{\|u - z\|_2}\right)\ .
        \end{equation}
        Now, notice that
        \begin{itemize}
            \item $\|u - v\|_2 \leq \eta L$ by hypothesis ;
            \item $\|u - z\|_2 \geq L$ for all $z\in \partial \overbar{B}$ by the assumption of Case $2$ ;
            \item $\|v - z\|_2 \geq \|u - z\|_2 - \|u - v\|_2 \geq \lceil (1 - \eta) L \rceil$ for all $z\in \partial \overbar{B}$, from the first two bullets and the triangle inequality.
        \end{itemize}
        Using the three bullets in \eqref{eq:lem:CLG.lemme12.BDG2001.equation.3} and \eqref{eq:lem:CLG.lemme12.BDG2001.equation.4}, we have
        \begin{equation}\label{eq:lem:CLG.lemme12.BDG2001.equation.5}
            \log(1 - \eta) - \frac{C_1}{\lceil (1 - \eta) L \rceil^2} \leq \eqref{eq:lem:CLG.lemme12.BDG2001.equation.3} \leq \log(1 + \eta) + \frac{C_2}{\lceil (1 - \eta) L \rceil^2}
        \end{equation}
        for appropriate constants $C_1,C_2 > 0$.
        Since $L \geq 1$ and $\lceil (1 - \eta) L \rceil \geq 1$, inequality \eqref{eq:lem:CLG.lemme12.BDG2001.to.show} follows by regrouping \eqref{eq:lem:CLG.lemme12.BDG2001.clubsuit.last}, \eqref{eq:lem:CLG.lemme12.BDG2001.equation.3} and \eqref{eq:lem:CLG.lemme12.BDG2001.equation.5}.
    \end{proof}

    \begin{lemma}\label{lem:CLG.lemme12.BDG2001.complement}
        Let $0 \leq \lambda' < 1$ and $d \geq 1/\sqrt{2}$. For all $v\in V_N$, define $S_{v,d}$ to be the set of finite boxes $B \subseteq \Z^2$ such that $[v]_{\lambda'} \subseteq B \cap V_N$ and $\max_{z\in \partial B} \|v - z\|_2 \leq d N^{1- \lambda'}$, then there exists a constant $C = C(d) > 0$ such that
        \begin{equation*}
            \max_{v\in V_N} \max_{B\in S_{v,d}} \var{}{\phi_v(\lambda') - \phi_v(B)} \leq C
        \end{equation*}
        for $N$ large enough.
    \end{lemma}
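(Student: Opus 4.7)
The plan is to express the variance as a difference of Green functions and then bound that difference by a $d$-dependent constant via a random-walk computation. Since $[v]_{\lambda'} \subseteq B \cap V_N$, the $\sigma$-algebra $\mathcal{F}_{\partial(B \cap V_N)}$ sits inside $\mathcal{F}_{\partial[v]_{\lambda'} \cup [v]_{\lambda'}^c}$, so by the Markov property \eqref{eq:GFF.Markov} and the tower property, $\phi_v(B) = \mathbb{E}[\phi_v(\lambda') \mid \mathcal{F}_{\partial(B \cap V_N)}]$. Then $\phi_v(\lambda') - \phi_v(B)$ is orthogonal to $\phi_v(B)$, which combined with the identity $\var{}{\phi_v - \phi_v(A)} = G_{A \cap V_N}(v,v)$ (established as in the derivation of \eqref{eq:tech.lemma.1.green.function.difference}) gives
$$\var{}{\phi_v(\lambda') - \phi_v(B)} = G_{B \cap V_N}(v,v) - G_{[v]_{\lambda'}}(v,v).$$

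Next, by the strong Markov property applied to the simple random walk starting at $v$, which must exit $[v]_{\lambda'}$ before exiting $B \cap V_N$, I would rewrite
$$G_{B \cap V_N}(v,v) - G_{[v]_{\lambda'}}(v,v) = \sum_{w \in \partial[v]_{\lambda'}} \mathscr{P}_v(W_{\tau_{\partial[v]_{\lambda'}}} = w) \, G_{B \cap V_N}(w, v).$$
The key observation is that $\partial V_N \cap (B \cap V_N) \subseteq \partial(B \cap V_N)$, so $G_{B \cap V_N}(w, \cdot) \equiv 0$ whenever $w \in \partial V_N$; hence only the points $w \in \partial[v]_{\lambda'} \setminus \partial V_N$ contribute to the sum. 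Such $w$ lie on the true inner boundary of the box, so $\|w - v\|_2 \geq \tfrac{1}{2} N^{1-\lambda'}$, and convexity of $B$ together with the hypothesis $\max_{z \in \partial B} \|v - z\|_2 \leq d N^{1-\lambda'}$ yields $\|z - v\|_2 \leq d N^{1-\lambda'}$ and thus $\|z - w\|_2 \leq (d + \tfrac{1}{\sqrt{2}}) N^{1-\lambda'}$ for every $z \in \partial(B \cap V_N)$.

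Plugging these distance bounds into the representation \eqref{eq:lawler.green.function} together with the potential-kernel asymptotic \eqref{eq:lawler.estime.noyau.potentiel.1} makes the leading $(1 - \lambda') \log N$ terms cancel, leaving $G_{B \cap V_N}(v, w) \leq \log(2d + \sqrt{2}) + O(1) \leq C(d)$ uniformly in $z$ and $w$. Summing over $w$ against the harmonic measure and using that it has total mass at most one yields the claimed bound. The main technical subtlety is the case where $[v]_{\lambda'}$ is cut off by $\partial V_N$: this places parts of $\partial[v]_{\lambda'}$ arbitrarily close to $v$ and would otherwise wreck the logarithmic cancellation. The observation that these points carry zero Green-function weight circumvents this issue and is what makes the random-walk representation the right tool here.
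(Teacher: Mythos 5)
Your argument is correct and rests on the same machinery as the paper: you reduce the variance to the Green-function difference $G_{B\cap V_N}(v,v)-G_{[v]_{\lambda'}}(v,v)$ exactly as in the derivation of \eqref{eq:tech.lemma.1.green.function.difference}, and then control it through the representation \eqref{eq:lawler.green.function} and the potential-kernel asymptotics \eqref{eq:lawler.estime.noyau.potentiel.1}; the paper's proof is literally a pointer to the calculations of Lemmas \ref{lem:tech.lemma.1} and \ref{lem:tech.lemma.2} with $B\cap V_N$ in the role of $[v]_{\lambda}$. Where you genuinely differ is in the handling of the boundary effect: the paper deals with the case where $[v]_{\lambda'}$ is cut off by $\partial V_N$ through the translation/monotonicity argument of Lemma \ref{lem:tech.lemma.2} (inequality \eqref{eq:lem:tech.lemma.2.diff.green.functions.inequality}), whereas you perform one first-exit decomposition of the Green-function difference over $\partial[v]_{\lambda'}$ and note that exit points lying on $\partial V_N$ contribute nothing, since they belong to $\partial(B\cap V_N)$ where $G_{B\cap V_N}(\cdot,v)$ vanishes; only points at distance of order $N^{1-\lambda'}$ from $v$ survive, and the logarithms then cancel uniformly in $v\in V_N$. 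This is the same device the paper uses in \eqref{eq:lem:tech.lemma.2.max.var.lambda.0}, but deployed here it removes the need for the translation step and yields the uniform bound in one pass, at the cost of redoing a computation that the paper simply recycles. Two details worth making explicit in a write-up, though neither is a gap: the surviving exit points satisfy $\|w-v\|_{\infty}\geq \tfrac{1}{2}N^{1-\lambda'}-1$ because they are adjacent to a point outside the square (not outside $V_N$), and the first-exit identity is legitimate because $\partial(B\cap V_N)$ does not meet $[v]_{\lambda'}^{o}$, so the walk started at $v$ hits $\partial[v]_{\lambda'}$ no later than $\partial(B\cap V_N)$.
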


    \begin{proof}
        This follows directly from the calculations in Lemma \ref{lem:tech.lemma.1} and Lemma \ref{lem:tech.lemma.2} where $B \cap V_N$ plays the same role as $[v]_{\lambda}$.
    \end{proof}

    The next lemma is used in equation \eqref{eqn: H split.1} of Lemma \ref{lem:IGFF.order1.upper.bound} and equation \eqref{eq:lem:IGFF.high.points.upper.bound.beginning.2} of Lemma \ref{lem:IGFF.high.points.upper.bound} to show that the error coming from the approximation of the branching structure of $\psi$ is small enough that the problem of finding the upper bound for the maximum and the log-number of $\gamma$-high points is the same (modulo the additional hurdle caused by the decay of variance near the edges of $V_N$) as in the context of branching random walks.

    \begin{lemma}\label{lem:tech.lemma.3}
        Let $\lambda_{j-1} < \lambda \leq \lambda_j$ for a certain $j\in \{1,...,M\}$, then there exists a constant $C = C(\sigma_1,...,\sigma_j) > 0$ such that
        \begin{equation*}
            \max_{v\in V_N} \var{}{\psi_v(\lambda) - \psi_{v_{\lambda}}(\lambda)} \leq C
        \end{equation*}
        for $N$ large enough.
    \end{lemma}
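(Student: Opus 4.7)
The plan is to reduce the bound to variance estimates for the underlying GFF $\phi$ via the linear decomposition \eqref{eq:tech.lemma.1.decomp.prec}, and then to compare $\phi_v(\lambda')$ with $\phi_{v_\lambda}(\lambda')$ by inserting an auxiliary conditioning box that simultaneously contains both $[v]_{\lambda'}$ and $[v_\lambda]_{\lambda'}$.

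First I would dispose of the trivial case $\lambda=1$: since $R_1=V_N$, one has $v_\lambda=v$ and the variance vanishes. For $\lambda<1$, applying \eqref{eq:tech.lemma.1.decomp.prec} to both $\psi_v(\lambda)$ and $\psi_{v_\lambda}(\lambda)$ and using the triangle inequality for standard deviations reduces the problem to showing
\[
\sup_{v\in V_N}\mathrm{Var}\bigl(\phi_v(\lambda')-\phi_{v_\lambda}(\lambda')\bigr)=O(1)
\]
for each $\lambda'\in\{\lambda_1,\dots,\lambda_{j-1},\lambda\}\subseteq(0,1)$. The case $\lambda'=0$ is trivial because $\phi_v(0)\equiv 0$ by the Dirichlet boundary condition.

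Fix such a $\lambda'$ and let $B$ be the square box centered at $v$ of side length $KN^{1-\lambda'}$ for a constant $K$ chosen large enough. By near-optimality of $R_\lambda$ we have $\|v-v_\lambda\|_2\le c\,N^{1-\lambda}\le c\,N^{1-\lambda'}$, and a large enough $K$ simultaneously ensures that $[v]_{\lambda'}$ and $[v_\lambda]_{\lambda'}$ both lie in $B\cap V_N$, that both $v$ and $v_\lambda$ sit at distance $\asymp N^{1-\lambda'}$ from $\partial B$, and that $\|v-v_\lambda\|_2\le\eta\,d(v,\partial B)$ for some $\eta<1$. I would then split
\[
\phi_v(\lambda')-\phi_{v_\lambda}(\lambda')=\bigl(\phi_v(\lambda')-\phi_v(B)\bigr)+\bigl(\phi_v(B)-\phi_{v_\lambda}(B)\bigr)+\bigl(\phi_{v_\lambda}(B)-\phi_{v_\lambda}(\lambda')\bigr),
\]
bound the two outer terms by Lemma \ref{lem:CLG.lemme12.BDG2001.complement} (the size constraints are built into the choice of $K$), and bound the middle term by Lemma \ref{lem:CLG.lemme12.BDG2001} (its hypothesis $\|v-v_\lambda\|_2\le\eta L$ is exactly what $K$ was chosen to guarantee). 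A final application of the triangle inequality for standard deviation closes the estimate.

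The main technical obstacle is the interaction with $\partial V_N$: when $v$ sits near the edge, both $[v]_{\lambda'}$ and $B$ may be cut off, so one cannot simply work with undisturbed square neighborhoods. This difficulty is already absorbed by Lemmas \ref{lem:CLG.lemme12.BDG2001} and \ref{lem:CLG.lemme12.BDG2001.complement}, which handle cut-off boxes uniformly in $v\in V_N$. One only needs to verify the mild auxiliary constraints (width of $B$ at most $N/2$ and $d(v,\partial B)\le N/4$), which hold for $N$ large enough thanks to $\lambda'\ge\lambda_1>0$.
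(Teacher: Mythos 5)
Your proposal is correct and follows essentially the same route as the paper: reduce to the GFF increments via \eqref{eq:tech.lemma.1.decomp.prec}, insert an auxiliary box containing both $[v]_{\lambda'}$ and $[v_\lambda]_{\lambda'}$, split into three terms, and bound the outer terms by Lemma \ref{lem:CLG.lemme12.BDG2001.complement} and the middle term by Lemma \ref{lem:CLG.lemme12.BDG2001}. The only differences are cosmetic (box centered at $v$ with a large constant $K$ instead of at $v_\lambda$ with width $2\lceil N^{1-\mu}\rceil$, and the triangle inequality for standard deviations in place of Jensen's inequality), so the argument matches the paper's proof.
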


    \begin{proof}
        The lemma is trivial when $\lambda = 1$ since $v = v_1$. Therefore, assume $0 < \lambda < 1$. Choose \hspace{-0.1mm}$v_{\lambda}\hspace{-0.4mm}\in \hspace{-0.4mm}R_{\lambda}$ \hspace{-0.1mm}any \hspace{-0.1mm}representative \hspace{-0.1mm}that \hspace{-0.1mm}is \hspace{-0.1mm}closest \hspace{-0.1mm}to \hspace{-0.1mm}$v$ \hspace{-0.1mm}(there \hspace{-0.1mm}may \hspace{-0.1mm}be \hspace{-0.1mm}more \hspace{-0.1mm}than \hspace{-0.1mm}one). For all $\mu \in (0,\lambda]$, the square box $B_{\mu} \subseteq \Z^2$ of width $2 \lceil N^{1 - \mu} \rceil$ centered at $v_{\lambda}$ contains both $[v]_{\mu}$ and $[v_{\lambda}]_{\mu}$ because $\|v - v_{\lambda}\|_{\infty} \leq \frac{1}{2} N^{1 - \lambda}$.
        Then, by Jensen's inequality :
        \vspace{1mm}
        \begin{equation}\label{lem:tech.lemma.3.star}
            \var{}{\phi_v(\mu) - \phi_{v_{\lambda}}(\mu)}
            \leq
            3 \cdot \left\{\hspace{-1mm}
            \begin{array}{l}
                \hspace{4mm}\var{}{\phi_v(\mu) - \phi_v(B_{\mu})} \\
                + ~\var{}{\phi_v(B_{\mu}) - \phi_{v_{\lambda}}(B_{\mu})} \\
                + ~\var{}{\phi_{v_{\lambda}}(B_{\mu}) - \phi_{v_{\lambda}}(\mu)}
            \end{array}\hspace{-1mm}
            \right\} \leq \tilde{C}\ .\vspace{1mm}
        \end{equation}
        To see the last inequality, bound the first and third variance term inside the braces using Lemma \ref{lem:CLG.lemme12.BDG2001.complement} with $d = 3/\sqrt{2}$ and bound the second variance term inside the braces using Lemma \ref{lem:CLG.lemme12.BDG2001} with $\eta = 1/\sqrt{2}$ (since $\|v - v_{\lambda}\|_2 \leq N^{1 - \lambda} / \sqrt{2} \leq N^{1 - \mu} / \sqrt{2}$), $u = v_{\lambda}$ and $L = \lceil N^{1-\mu} \rceil$. Now, from \eqref{eq:tech.lemma.1.decomp.prec} and Jensen's inequality, we get
        \vspace{0.5mm}
        \begin{align*}
            \var{}{\psi_v(\lambda) - \psi_{v_{\lambda}}(\lambda)}
            &= \mathbb{V}\left(\hspace{-1mm}
            \begin{array}{l}
                \hspace{3.5mm}\sigma_j (\phi_v(\lambda) - \phi_{v_{\lambda}}(\lambda)) \\
                + \sum_{i=1}^{j-1} (\sigma_i - \sigma_{i+1}) (\phi_v(\lambda_i) - \phi_{v_{\lambda}}(\lambda_i))
            \end{array}
            \hspace{-1mm}\right) \\
            &\leq j \cdot \left\{\hspace{-1mm}
            \begin{array}{l}
                \hspace{3.5mm}\sigma_j^2 ~\var{}{\phi_v(\lambda) - \phi_{v_{\lambda}}(\lambda)} \\
                + \sum_{i=1}^{j-1} (\sigma_i - \sigma_{i+1})^2 ~\var{}{\phi_v(\lambda_i) - \phi_{v_{\lambda}}(\lambda_i)}
            \end{array}\hspace{-1mm}
            \right\}.
        \end{align*}
        Simply use \eqref{lem:tech.lemma.3.star} to bound each variance term inside the braces by a constant. This ends the proof of the lemma.
    \end{proof}

    \begin{lemma}[Gaussian estimates, see e.g. \citet{MR2319516}]\label{lem:gaussian.estimates}
        Suppose that $Z\sim \mathcal{N}(0,\sigma^2)$ where $\sigma > 0$, then for all $z > 0$,
        \begin{equation*}
             \left(1 - \frac{\sigma^2}{z^2}\right) \frac{\sigma}{\sqrt{2\pi} z} \exp\left(-\frac{z^2}{2 \sigma^2}\right) \leq \prob{}{Z \geq z} \leq \frac{\sigma}{\sqrt{2\pi} z} \exp\left(-\frac{z^2}{2 \sigma^2}\right).
        \end{equation*}
    \end{lemma}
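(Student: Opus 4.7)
The plan is to reduce to the standard normal case by scaling, and then obtain both bounds from elementary calculus on the Gaussian density.

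First, I would set $Y \circeq Z/\sigma \sim \mathcal{N}(0,1)$ and $t \circeq z/\sigma > 0$, so that $\prob{}{Z \geq z} = \prob{}{Y \geq t}$. Proving the two bounds is then equivalent to showing
\begin{equation*}
    \left(1 - \frac{1}{t^2}\right) \frac{1}{\sqrt{2\pi}\,t} e^{-t^2/2} \;\leq\; \prob{}{Y \geq t} \;\leq\; \frac{1}{\sqrt{2\pi}\,t} e^{-t^2/2},
\end{equation*}
since multiplying each side by $\sigma/z \cdot (z/\sigma) = 1$ and replacing $1/t^2 = \sigma^2/z^2$ recovers the statement.

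For the upper bound, the idea is to exploit $x \geq t$ on the range of integration: rewrite the Gaussian tail as
\begin{equation*}
    \prob{}{Y \geq t} = \int_t^\infty \frac{1}{\sqrt{2\pi}} e^{-x^2/2}\, dx \leq \int_t^\infty \frac{x}{t} \cdot \frac{1}{\sqrt{2\pi}} e^{-x^2/2}\, dx,
\end{equation*}
and then note that the antiderivative of $x e^{-x^2/2}$ is $-e^{-x^2/2}$, so the right-hand side evaluates to $\frac{1}{\sqrt{2\pi}\,t} e^{-t^2/2}$.

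For the lower bound, I would use integration by parts with $u = 1/x$ and $dv = x e^{-x^2/2}\, dx$, which gives
\begin{equation*}
    \int_t^\infty e^{-x^2/2}\, dx = \frac{1}{t} e^{-t^2/2} - \int_t^\infty \frac{1}{x^2} e^{-x^2/2}\, dx.
\end{equation*}
A second integration by parts with $u = 1/x^3$ and $dv = x e^{-x^2/2}\, dx$ bounds the remainder term by $\frac{1}{t^3} e^{-t^2/2}$ (the resulting boundary term is positive and the leftover integral is non-negative). Combining these yields
\begin{equation*}
    \int_t^\infty e^{-x^2/2}\, dx \geq \left(\frac{1}{t} - \frac{1}{t^3}\right) e^{-t^2/2},
\end{equation*}
which after dividing by $\sqrt{2\pi}$ is the desired lower bound. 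No serious obstacle is expected here: the only mild subtlety is choosing integration-by-parts pairings that produce a quickly decaying remainder, and verifying that the boundary term at infinity vanishes, both of which are routine.
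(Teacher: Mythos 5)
Your proof is correct: the reduction to the standard normal by the substitution $Y = Z/\sigma$, the upper bound via $1 \leq x/t$ on the integration range, and the lower bound via two integrations by parts (discarding the non-negative leftover integral) are all sound and yield exactly the stated two-sided bound. The paper itself gives no proof of this lemma, citing \citet{MR2319516} instead, so your argument is the standard Mills-ratio derivation and serves as a complete, self-contained substitute for the citation.
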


    \vspace{0.3mm}
    \begin{lemma}[\citet{PaleyZygmund1932} inequality]\label{lem:paley.zygmund}
        Let $0 \leq X \in L^2(\mathbb{P})$ be such that $\prob{}{X > 0} > 0$, then for all $0 \leq \theta \leq 1$,
        \vspace{-1mm}
        \begin{equation*}
            \prob{}{X \geq \theta \esp{}{X}} \geq (1 - \theta)^2 \frac{(\esp{}{X})^2}{\esp{}{X^2}}\ .
        \end{equation*}
    \end{lemma}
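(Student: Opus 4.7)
The plan is to prove the Paley-Zygmund inequality by the classical route: decompose the first moment across the threshold event, estimate the low-value contribution trivially, then apply Cauchy-Schwarz to the high-value contribution.

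First I would write
\begin{equation*}
    \esp{}{X} = \esp{}{X \mathbf{1}_{\{X < \theta \esp{}{X}\}}} + \esp{}{X \mathbf{1}_{\{X \geq \theta \esp{}{X}\}}}.
\end{equation*}
On the event $\{X < \theta \esp{}{X}\}$, the integrand is bounded pointwise by $\theta \esp{}{X}$, so the first term is at most $\theta \esp{}{X}$. Rearranging yields the lower bound
\begin{equation*}
    (1 - \theta) \esp{}{X} \leq \esp{}{X \mathbf{1}_{\{X \geq \theta \esp{}{X}\}}},
\end{equation*}
which, since $X \geq 0$ and $\theta \leq 1$, is nontrivial.

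Next I would apply the Cauchy-Schwarz inequality to the right-hand side, writing $X \mathbf{1}_{\{X \geq \theta \esp{}{X}\}} = X \cdot \mathbf{1}_{\{X \geq \theta \esp{}{X}\}}$ and using $\mathbf{1}_A^2 = \mathbf{1}_A$ to obtain
\begin{equation*}
    \esp{}{X \mathbf{1}_{\{X \geq \theta \esp{}{X}\}}} \leq \sqrt{\esp{}{X^2}} \cdot \sqrt{\prob{}{X \geq \theta \esp{}{X}}}.
\end{equation*}
The hypothesis $X \in L^2(\mathbb{P})$ guarantees the finiteness of $\esp{}{X^2}$, and $\prob{}{X > 0} > 0$ ensures $\esp{}{X} > 0$ so that division is legitimate. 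Squaring both sides of the combined inequality and dividing by $\esp{}{X^2}$ yields
\begin{equation*}
    \prob{}{X \geq \theta \esp{}{X}} \geq (1 - \theta)^2 \frac{(\esp{}{X})^2}{\esp{}{X^2}},
\end{equation*}
which is exactly the claim. There is no real obstacle here; the only point requiring a hint of care is checking that the trivial cases ($\esp{}{X} = 0$ or $\theta = 1$) are consistent with the stated inequality, but both reduce to $0 \geq 0$ or to a vacuous bound.
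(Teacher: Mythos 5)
Your proof is correct and is the standard argument: split $\esp{}{X}$ across the event $\{X \geq \theta \esp{}{X}\}$, bound the low part by $\theta\esp{}{X}$, and apply Cauchy--Schwarz to the high part before squaring. The paper itself gives no proof of this lemma --- it is stated as a classical result with a citation to Paley and Zygmund (1932) --- so there is nothing to compare against; your argument is the expected one and the side remarks on $\esp{}{X^2}>0$ and the trivial cases are handled appropriately.
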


\section{Karush-Kuhn-Tucker theorem and applications}\label{sec:KKT}

    In this section, we state the Karush-Kuhn-Tucker theorem and the solutions to the two optimization problems posed in Section \ref{sec:outline.proofs}. The optimal path for the maximum, $\lambda \mapsto L_N^{\star}(\lambda)$, comes from the solution to the problem stated in Lemma \ref{lem:optimization.1} while the optimal path for $\gamma$-high points, $\lambda \mapsto L_N^{\gamma}(\lambda)$, comes from the solution to the problem stated in Lemma \ref{lem:optimization.2}. The Karush-Kuhn-Tucker theorem only gives, a priori, necessary conditions for local optimality. However, the conditions are also sufficient for global optimality here because the objective function $f_{\gamma}$ below and the constraint functions $g_k$ are continuously differentiable and concave ($f_{\gamma^{\star}}$ is linear), see \citet{MR614849}. The proof of the two lemmas can be found in Appendix A of \citet{Ouimet2014master} and are direct applications of the theorem.

    \begin{theorem}[Karush-Kuhn-Tucker, see e.g. \hspace{-0.5mm}\citet{MR2954022}]\label{thm:KKT}
        Let $f : \R^{n_1} \rightarrow \R$ be an objective function and let
        \begin{equation*}
            \mathcal{U}^{\geq} \circeq \{\boldsymbol{x}\in \R^{n_1} \nvert g_k(\boldsymbol{x}) \geq 0 \ \ \forall k\in \{1,...,n_2\}\}
        \end{equation*}
        be a set of constraints specified by the constraint functions \hspace{0.4mm}$g_k \hspace{-0.6mm}: \hspace{-0.2mm}\R^{n_1} \hspace{-0.4mm}\rightarrow \hspace{-0.2mm}\R, ~\hspace{-0.2mm}1 \hspace{-0.2mm}\leq \hspace{-0.2mm}k \hspace{-0.2mm}\leq \hspace{-0.2mm}n_2$. Furthermore, assume that
        \begin{itemize}
            \item[\ (a)] $f$ attains a local maximum at $\boldsymbol{x}^{\star}\in \mathcal{U}^{\geq}$ with respect to $\mathcal{U}^{\geq}$;
            \item[\ (b)] $f$ is Fr\'echet differentiable at $\boldsymbol{x}^{\star}$;
            \item[\ (c)] the $g_k$'s are Fr\'echet differentiable at $\boldsymbol{x}^{\star}$.
        \end{itemize}
        When the constraints qualify (they do in Lemma \ref{lem:optimization.1} and Lemma \ref{lem:optimization.2} because the $g_k$'s are concave and $\boldsymbol{0}\in \mathcal{U}^{>}$, see Slater's condition in \citet{MR2954022}), then there exists $(\mu_1,...,\mu_{n_2})\in \R^{n_2}$ such that the following points hold for all $k\in \{1,...,n_2\}$ ($\nabla$ is the gradient here) :
        \begin{itemize}
            \item[\ (1)] $\nabla f(\boldsymbol{x}^{\star}) + \sum_{k=1}^{n_2} \mu_k \nabla g_k(\boldsymbol{x}^{\star}) = 0$;
            \item[\ (2)] $g_k(\boldsymbol{x}^{\star}) \geq 0$;
            \item[\ (3)] $\mu_k \geq 0$;
            \item[\ (4)] $\mu_k g_k(\boldsymbol{x}^{\star}) = 0$.
        \end{itemize}
    \end{theorem}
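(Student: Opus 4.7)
The plan is to reduce the inequality-constrained problem to a statement about convex cones, via a linearization at $\boldsymbol{x}^{\star}$, and then invoke Farkas' lemma (equivalently, a separating-hyperplane argument) to produce the multipliers. First I would introduce the \emph{active set} $I \circeq \{k : g_k(\boldsymbol{x}^{\star}) = 0\}$. For $k \notin I$ the constraint is strict at $\boldsymbol{x}^{\star}$, so by continuity it remains strict in a neighborhood; these constraints are locally vacuous and the choice $\mu_k \circeq 0$ trivially satisfies conditions (3) and (4), leaving condition (1) to be checked only on active indices. This already gives complementary slackness for free.

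Next, I would examine the linearized feasible cone at $\boldsymbol{x}^{\star}$:
\begin{equation*}
    \mathcal{C} \circeq \{\boldsymbol{d} \in \R^{n_1} : \langle \nabla g_k(\boldsymbol{x}^{\star}), \boldsymbol{d} \rangle \geq 0 \ \text{ for all } k\in I\}\ .
\end{equation*}
The key analytic step is that every $\boldsymbol{d}\in \mathcal{C}$ is a legitimate feasible direction, i.e.\ there exists a $C^1$ curve $t \mapsto \boldsymbol{x}(t)$ with $\boldsymbol{x}(0) = \boldsymbol{x}^{\star}$, $\boldsymbol{x}'(0) = \boldsymbol{d}$, lying in $\mathcal{U}^{\geq}$ for small $t > 0$. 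This is exactly where the constraint qualification enters: because each $g_k$ is concave and $\boldsymbol{0}\in \mathcal{U}^{>}$, Slater's condition gives a direction $\boldsymbol{d}_0$ with $\langle \nabla g_k(\boldsymbol{x}^{\star}), \boldsymbol{d}_0\rangle > 0$ for every $k\in I$, and a standard perturbation $\boldsymbol{d} + \varepsilon \boldsymbol{d}_0$ shows that the tangent cone coincides with $\mathcal{C}$.

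Since $\boldsymbol{x}^{\star}$ is a local maximum and $f$ is Fr\'echet differentiable, the first-order condition along any such feasible direction yields $\langle \nabla f(\boldsymbol{x}^{\star}), \boldsymbol{d}\rangle \leq 0$ for all $\boldsymbol{d}\in \mathcal{C}$. Equivalently, $-\nabla f(\boldsymbol{x}^{\star})$ lies in the polar cone $\mathcal{C}^{\circ}$. Farkas' lemma applied to the finite family $\{\nabla g_k(\boldsymbol{x}^{\star})\}_{k\in I}$ then produces non-negative scalars $\{\mu_k\}_{k\in I}$ with
\begin{equation*}
    -\nabla f(\boldsymbol{x}^{\star}) = \sum_{k\in I} \mu_k \nabla g_k(\boldsymbol{x}^{\star})\ ,
\end{equation*}
which is condition (1); combined with $\mu_k \circeq 0$ for $k\notin I$, all four KKT conditions hold.

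The main obstacle is the intermediate step identifying $\mathcal{C}$ with the true tangent cone of $\mathcal{U}^{\geq}$ at $\boldsymbol{x}^{\star}$. Without a constraint qualification this identification can fail (the linearized cone may be strictly larger), and the multipliers need not exist or may degenerate. Here one must carefully exploit concavity of the $g_k$'s together with Slater's interior-point hypothesis to construct the feasible curves explicitly and rule out such pathologies; once that is done, Farkas' lemma is essentially bookkeeping.
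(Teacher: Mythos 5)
The paper offers no proof of this theorem: it is quoted as a standard result with a pointer to \citet{MR2954022}, and only its consequences (Lemmas \ref{lem:optimization.1} and \ref{lem:optimization.2}, whose proofs are in Appendix A of \citet{Ouimet2014master}) are actually used. So there is no internal argument to compare against, and your task was to reprove a textbook theorem; your sketch is the standard route and is essentially correct. Setting $\mu_k = 0$ off the active set $I$ disposes of (2)--(4); concavity of $g_k$ plus the Slater point gives, for active $k$, $\langle \nabla g_k(\boldsymbol{x}^{\star}), \boldsymbol{x}_0 - \boldsymbol{x}^{\star}\rangle \geq g_k(\boldsymbol{x}_0) > 0$, so $\boldsymbol{d}_0 \circeq \boldsymbol{x}_0 - \boldsymbol{x}^{\star}$ is the strictly feasible direction you need; and Farkas' lemma converts $\langle \nabla f(\boldsymbol{x}^{\star}), \boldsymbol{d}\rangle \leq 0$ on the linearized cone $\mathcal{C}$ into condition (1). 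Two small remarks. First, you do not need the full identification of $\mathcal{C}$ with the tangent cone, nor $C^1$ curves: for $\boldsymbol{d}\in\mathcal{C}$ and $\varepsilon>0$, Fr\'echet differentiability at $\boldsymbol{x}^{\star}$ already shows that $\boldsymbol{x}^{\star} + t(\boldsymbol{d} + \varepsilon \boldsymbol{d}_0)$ is feasible for small $t>0$ (active constraints have strictly positive directional derivative, inactive ones survive by continuity), whence $\langle \nabla f(\boldsymbol{x}^{\star}), \boldsymbol{d} + \varepsilon \boldsymbol{d}_0\rangle \leq 0$, and letting $\varepsilon \downarrow 0$ gives the inequality on all of $\mathcal{C}$; this is cleaner and avoids the only delicate claim in your write-up. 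Second, a naming slip: $\langle \nabla f(\boldsymbol{x}^{\star}), \boldsymbol{d}\rangle \leq 0$ for all $\boldsymbol{d}\in\mathcal{C}$ says $\nabla f(\boldsymbol{x}^{\star})$ lies in the \emph{polar} cone of $\mathcal{C}$, equivalently $-\nabla f(\boldsymbol{x}^{\star})$ lies in the \emph{dual} cone, which Farkas identifies with the conical hull of $\{\nabla g_k(\boldsymbol{x}^{\star})\}_{k\in I}$; your displayed conclusion is the correct one, only the label is off. Note also that what you prove is the theorem under the Slater-plus-concavity qualification, not under an arbitrary constraint qualification, but that is precisely the form invoked in the paper, so nothing is lost for the application.
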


    \begin{lemma}(Optimal path for the maximum)\label{lem:optimization.1}
        Let
        \begin{equation*}
            f_{\gamma^{\star}}(x_1,...,x_M) \circeq \sum_{i=1}^M x_i
        \end{equation*}
        be the objective function to maximize under the constraints
        \begin{equation*}
            g_k(x_1,...,x_M) \circeq \sum_{i=1}^k \left(\nabla \lambda_i - \frac{x_i^2}{\sigma_i^2 \nabla \lambda_i}\right) \geq 0, \ \ \ 1 \leq k \leq M,
        \end{equation*}
        then there exists a unique global maximum. The solution is given by
        \begin{equation*}
            x_i^{\star} = \nabla \mathcal{J}_{\sigma^2 / \bar{\sigma}}(\lambda_i), \ \ \ 1 \leq i \leq M,
        \end{equation*}
        and the maximum is given by
        \begin{equation*}
             f_{\gamma^{\star}}(x_1^{\star},...,x_M^{\star}) = \mathcal{J}_{\sigma^2 / \bar{\sigma}}(1) \circeq \gamma^{\star}.
        \end{equation*}
    \end{lemma}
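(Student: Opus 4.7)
The plan is a direct application of Theorem \ref{thm:KKT}, using the structure of the concave hull to identify both the optimizer and the Lagrange multipliers. First I would verify that the feasible set $\mathcal{U}^{\geq} = \{\boldsymbol{x} : g_k(\boldsymbol{x}) \geq 0 \ \forall k\}$ is nonempty (it contains $\boldsymbol{0}$) and compact: each $g_k$ being concave (in fact, $g_M$ alone forces $|x_i| \leq \sigma_i \nabla\lambda_i$). Since $f_{\gamma^\star}$ is continuous, a global maximum exists. Slater's condition is satisfied at the strictly feasible point $\boldsymbol{0}$, so KKT applies and I can look for $(x_i^\star, \mu_k)$ satisfying conditions (1)--(4).

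Computing the gradients gives $\partial f_{\gamma^\star}/\partial x_i = 1$ and $\partial g_k/\partial x_i = -2 x_i/(\sigma_i^2 \nabla\lambda_i)$ for $i \leq k$, and $0$ otherwise. The stationarity condition (1) therefore yields, for each $i$,
\begin{equation*}
    x_i^\star = \frac{\sigma_i^2 \nabla\lambda_i}{2 \sum_{k \geq i} \mu_k}.
\end{equation*}
The claim is that $x_i^\star = \nabla \mathcal{J}_{\sigma^2/\bar{\sigma}}(\lambda_i) = \sigma_i^2 \nabla\lambda_i / \bar{\sigma}(\lambda_i)$, which forces $2 \sum_{k \geq i} \mu_k = \bar\sigma(\lambda_i)$. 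This identifies $\mu_k$ uniquely: $\mu_k = 0$ unless $\lambda_k = \lambda^l$ for some $l \in \{1,\ldots,m\}$, in which case $\mu_k = (\bar\sigma_l - \bar\sigma_{l+1})/2$ (with the convention $\bar\sigma_{m+1} = 0$). Non-negativity $\mu_k \geq 0$ in condition (3) then follows \emph{precisely} because $\bar{\sigma}$ is non-increasing, which is the defining feature of the concave hull.

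It remains to verify (2) and (4). Complementary slackness (4) requires $g_k(\boldsymbol{x}^\star) = 0$ whenever $\mu_k > 0$, i.e.\ at each scale $\lambda_k = \lambda^l$. This reduces to the identity
\begin{equation*}
    \sum_{i : \lambda_i \leq \lambda^l} \frac{(x_i^\star)^2}{\sigma_i^2 \nabla\lambda_i} = \sum_{l'=1}^l \int_{\lambda^{l'-1}}^{\lambda^{l'}} \frac{\sigma^2(s)}{\bar\sigma_{l'}^2}\, ds = \lambda^l,
\end{equation*}
where the last equality uses the key property that $\hat{\mathcal{J}}_{\sigma^2}$ agrees with $\mathcal{J}_{\sigma^2}$ at the breakpoints $\lambda^l$, so $\bar\sigma_{l'}^2 (\lambda^{l'} - \lambda^{l'-1}) = \int_{\lambda^{l'-1}}^{\lambda^{l'}} \sigma^2(s)\, ds$. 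Primal feasibility (2), $g_k(\boldsymbol{x}^\star) \geq 0$ for all $k$, is equivalent (for $k$ between two consecutive breakpoints in block $l$) to $\mathcal{J}_{\sigma^2}(\lambda_k) - \mathcal{J}_{\sigma^2}(\lambda^{l-1}) \leq \bar\sigma_l^2(\lambda_k - \lambda^{l-1})$, which is exactly the statement that the graph of $\mathcal{J}_{\sigma^2}$ lies below its concave hull on $[\lambda^{l-1}, \lambda^l]$.

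Global optimality follows from the remark of the paper that $f_{\gamma^\star}$ is linear and the $g_k$'s are concave, so KKT conditions are sufficient. For uniqueness, I would observe that any maximizer must saturate $g_M$ (otherwise one could increase $x_M$ without violating any constraint), and then exploit strict concavity of $g_M$ in each coordinate: if $\boldsymbol{x}$ and $\boldsymbol{x}'$ are both maximizers with $g_M(\boldsymbol{x}) = g_M(\boldsymbol{x}') = 0$, then $g_M((\boldsymbol{x}+\boldsymbol{x}')/2) \geq 0$ with equality only if $\boldsymbol{x} = \boldsymbol{x}'$, but $(\boldsymbol{x}+\boldsymbol{x}')/2$ is also a maximizer and must saturate $g_M$. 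Finally, evaluating the objective, $\sum_{i=1}^M x_i^\star = \sum_{i=1}^M \nabla \mathcal{J}_{\sigma^2/\bar\sigma}(\lambda_i) = \mathcal{J}_{\sigma^2/\bar\sigma}(1) = \gamma^\star$. The main technical point is the bookkeeping that translates the concave-hull structure of $\bar\sigma$ into the sign and support of the multipliers $\mu_k$; once this dictionary is set, each KKT condition reduces to a defining property of $\hat{\mathcal{J}}_{\sigma^2}$.
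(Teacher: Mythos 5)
Your proposal is correct and follows essentially the same route as the paper, which defers the proof to Appendix A of \citet{Ouimet2014master} and describes it as a direct application of the Karush--Kuhn--Tucker theorem: you identify the multipliers from the jumps of $\bar{\sigma}$, verify feasibility and complementary slackness via the concave-hull identities, and invoke sufficiency (linear objective, concave constraints) plus strict concavity of $g_M$ for uniqueness. The only blemish is a harmless slip in the compactness remark: $g_M \geq 0$ gives $|x_i| \leq \sigma_i \sqrt{\nabla\lambda_i}$, not $\sigma_i \nabla\lambda_i$, which does not affect the argument.
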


    \begin{lemma}(Optimal path for $\gamma$-high points)\label{lem:optimization.2}
        Let $\gamma^{l-1} \leq \gamma < \gamma^l$ for a certain $l\in \{1,...,m\}$, where the critical levels $\gamma^l$ are defined in \eqref{eq:critic.levels}. Furthermore, let
        \begin{equation*}
            f_{\gamma}(x_1,...,x_{M-1}) \circeq \sum_{i=1}^{M-1} \left(\nabla \lambda_i - \frac{x_i^2}{\sigma_i^2 \nabla \lambda_i}\right) + \left(\nabla \lambda_M - \frac{(\gamma - \sum_{i'=1}^{M-1} x_{i'})^2}{\sigma_M^2 \nabla \lambda_M}\right)
        \end{equation*}
        be the objective function to maximize under the constraints
        \begin{equation*}
            g_k(x_1,...,x_{M-1}) \circeq \sum_{i=1}^k \left(\nabla \lambda_i - \frac{x_i^2}{\sigma_i^2 \nabla \lambda_i}\right) \geq 0, \ \ \ 1 \leq k \leq M-1\hspace{0.3mm},
        \end{equation*}
        then there exists a unique global maximum. The solution is given by
        \begin{equation*}
            x_i^{\star} =
            \left\{
            \begin{array}{ll}
                \nabla \mathcal{J}_{\sigma^2 / \bar{\sigma}}(\lambda_i) ~& \mbox{when } \lambda_i \leq \lambda^{l-1} \\
                \frac{\nabla \mathcal{J}_{\sigma^2}(\lambda_i)}{\mathcal{J}_{\sigma^2}(\lambda^{l-1},1)} (\gamma - \mathcal{J}_{\sigma^2 / \bar{\sigma}}(\lambda^{l-1})) ~& \mbox{when } \lambda_i > \lambda^{l-1}
            \end{array}
            \right.
        \end{equation*}
        for all $i\in \{1,...,M-1\}$ and the maximum is given by
        \begin{equation*}
          f_{\gamma}(x_1^{\star},...,x_{M-1}^{\star}) = (1 - \lambda^{l-1}) - \frac{\left(\gamma - \mathcal{J}_{\sigma^2 / \bar{\sigma}}(\lambda^{l-1})\right)^2}{\mathcal{J}_{\sigma^2}(\lambda^{l-1},1)} \circeq \mathcal{E}_{\gamma}\ .
        \end{equation*}
    \end{lemma}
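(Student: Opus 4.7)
\textbf{Proof proposal for Lemma \ref{lem:optimization.2}.} My plan is to apply the Karush--Kuhn--Tucker theorem (Theorem \ref{thm:KKT}) directly. The objective function $f_\gamma$ is strictly concave: its Hessian is $-2\,\mathrm{diag}(a_1,\ldots,a_{M-1}) - 2b\,\mathbf{1}\mathbf{1}^{T}$ with $a_i = 1/(\sigma_i^2\nabla\lambda_i)>0$ and $b=1/(\sigma_M^2\nabla\lambda_M)>0$, which is negative definite; the constraint functions $g_k$ are concave by inspection. Hence the KKT conditions are necessary and sufficient for a unique global maximum, and Slater's condition holds at $\boldsymbol{0}\in \mathcal{U}^{>}$. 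The plan is therefore to verify that the candidate $\boldsymbol{x}^\star$ given in the statement satisfies the KKT system for an explicit choice of multipliers $\boldsymbol{\mu}$, then to evaluate $f_\gamma(\boldsymbol x^\star)$ by direct substitution.

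First I would compute the partial derivatives:
\begin{equation*}
    \frac{\partial f_\gamma}{\partial x_i}(\boldsymbol{x}) = -\frac{2 x_i}{\sigma_i^2 \nabla\lambda_i} + \frac{2(\gamma - \sum_{i'=1}^{M-1} x_{i'})}{\sigma_M^2 \nabla\lambda_M}, \qquad \frac{\partial g_k}{\partial x_i}(\boldsymbol{x}) = -\frac{2 x_i}{\sigma_i^2 \nabla\lambda_i}\,\mathbf{1}_{\{i\leq k\}}.
\end{equation*}
Plugging in $\boldsymbol{x}^\star$: for indices $i$ with $\lambda_i > \lambda^{l-1}$ one checks that $x_i^\star/(\sigma_i^2\nabla\lambda_i) = (\gamma-\mathcal{J}_{\sigma^2/\bar\sigma}(\lambda^{l-1}))/\mathcal{J}_{\sigma^2}(\lambda^{l-1},1)$, which equals $(\gamma - \sum_{i'=1}^{M-1} x_{i'}^\star)/(\sigma_M^2 \nabla \lambda_M)$ after summing the piecewise definition of $x^\star$ on $(\lambda^{l-1},1]$. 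For such $i$, $\partial f_\gamma/\partial x_i$ already vanishes, so I can set $\mu_k = 0$ for every $k$ with $\lambda_k > \lambda^{l-1}$ (there are no constraints with $\lambda_k = 1$ anyway, since $k\leq M-1$). For indices $i$ with $\lambda_i \leq \lambda^{l-1}$ one has $x_i^\star/(\sigma_i^2\nabla\lambda_i) = 1/\bar\sigma(\lambda_i)$, and the stationarity equation then reduces on each block $\{i:\lambda_i\in(\lambda^{j-1},\lambda^j]\}$ to a telescoping relation that forces $\mu_k=0$ except possibly at the jump scales $k$ with $\lambda_k=\lambda^j$, $j\leq l-1$; at these corner scales the multipliers can be computed recursively and come out non-negative precisely because $\bar\sigma$ is non-increasing in height.

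Second, I need to check $g_k(\boldsymbol{x}^\star)\geq 0$ with equality at the active constraints. Using $x_i^\star/(\sigma_i^2\nabla\lambda_i) = 1/\bar\sigma(\lambda_i)$ on $[0,\lambda^{l-1}]$, the substitution yields
\begin{equation*}
    g_k(\boldsymbol{x}^\star) = \lambda_k - \int_0^{\lambda_k} \frac{\sigma^2(s)}{\bar\sigma^2(s)}\,ds \quad \text{when } \lambda_k \leq \lambda^{l-1},
\end{equation*}
which is non-negative by the defining property $\mathcal{J}_{\bar\sigma^2} = \hat{\mathcal{J}}_{\sigma^2} \geq \mathcal{J}_{\sigma^2}$ of the concave hull and equals zero exactly at the corners $\lambda_k=\lambda^j$. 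For $\lambda_k > \lambda^{l-1}$, strict positivity follows by splitting the sum at $\lambda^{l-1}$ and using $\mathcal{J}_{\sigma^2}(\lambda^{l-1},\lambda_k)/\mathcal{J}_{\sigma^2}(\lambda^{l-1},1) < 1$ together with the hypothesis $\gamma < \gamma^l$. Complementary slackness $\mu_k g_k(\boldsymbol{x}^\star)=0$ is then automatic from the case analysis above.

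Finally, to compute $f_\gamma(\boldsymbol{x}^\star)$, I would split the sum into $\lambda_i\leq\lambda^{l-1}$ and $\lambda_i>\lambda^{l-1}$: the first part yields $\lambda^{l-1} - \mathcal{J}_{\sigma^2/\bar\sigma^2}(\lambda^{l-1})$, which equals $0$ since $\mathcal{J}_{\bar\sigma^2}(\lambda^{l-1})=\mathcal{J}_{\sigma^2}(\lambda^{l-1})$ (both equal $\hat{\mathcal{J}}_{\sigma^2}(\lambda^{l-1})$ at the corner), and the second part (including the $i=M$ term) collapses to $(1-\lambda^{l-1})-(\gamma-\mathcal{J}_{\sigma^2/\bar\sigma}(\lambda^{l-1}))^2/\mathcal{J}_{\sigma^2}(\lambda^{l-1},1) = \mathcal{E}_\gamma$ after an elementary cancellation. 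The main obstacle is bookkeeping: verifying non-negativity of the Lagrange multipliers at the corner scales $\lambda^1,\ldots,\lambda^{l-1}$ requires an induction that uses the decreasing nature of $\bar\sigma$ and the fact that the concave hull is strictly above the graph of $\mathcal{J}_{\sigma^2}$ between corners; everything else is bookkeeping made transparent by the piecewise structure of $x^\star$.
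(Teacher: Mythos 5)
Your proposal is correct and takes essentially the same route as the paper, which likewise treats the lemma as a direct application of the Karush--Kuhn--Tucker theorem (sufficiency from concavity of $f_\gamma$ and the $g_k$'s plus Slater's condition) and defers the computational verification to Appendix A of the cited master's thesis. One detail to make explicit when carrying out the multiplier bookkeeping: non-negativity of the multiplier attached to the corner scale $\lambda^{l-1}$ is not a consequence of $\bar\sigma$ being non-increasing alone---that multiplier equals $\bar{\sigma}_{l-1}\,(\gamma-\mathcal{J}_{\sigma^2/\bar{\sigma}}(\lambda^{l-1}))/\mathcal{J}_{\sigma^2}(\lambda^{l-1},1)-1$, so its non-negativity is precisely the hypothesis $\gamma\geq\gamma^{l-1}$, just as the feasibility check $g_k(\boldsymbol{x}^\star)\geq 0$ for $\lambda_k>\lambda^{l-1}$ needs the hull bound $\mathcal{J}_{\sigma^2}(\lambda^{l-1},\lambda_k)\leq\bar{\sigma}_l^2(\lambda_k-\lambda^{l-1})$ in addition to $\gamma<\gamma^l$.
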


\section*{Acknowledgements}

    The authors would like to thank an anonymous referee for a careful review of the preliminary version. We also gratefully acknowledge insightful discussions with David Belius, Anton Bovier, Nicola Kistler and Olivier Zindy.

%
%

\bibliographystyle{alea2}

\begin{thebibliography}{30}
\providecommand{\natexlab}[1]{#1}
\providecommand{\url}[1]{\texttt{#1}}
\providecommand{\urlprefix}{URL }
\expandafter\ifx\csname urlstyle\endcsname\relax
  \providecommand{\doi}[1]{doi:\discretionary{}{}{}#1}\else
  \providecommand{\doi}{doi:\discretionary{}{}{}\begingroup
  \urlstyle{rm}\Url}\fi
\providecommand{\eprint}[2][]{\url{#2}}

\bibitem[{Adler and Taylor(2007)}]{MR2319516}
R.~J. Adler and J.~E. Taylor.
\newblock \emph{Random fields and geometry}.
\newblock Springer Monographs in Mathematics. Springer, New York (2007).
\newblock ISBN 978-0-387-48112-8.
\newblock \href{http://www.ams.org/mathscinet-getitem?mr=MR2319516}{MR2319516}.

\bibitem[{Arguin and Zindy(2015)}]{MR3354619}
L.-P. Arguin and O.~Zindy.
\newblock Poisson-{D}irichlet statistics for the extremes of the
  two-dimensional discrete {G}aussian free field.
\newblock \emph{Electron. J. Probab.} \textbf{20}, no. 59, 19 (2015).
\newblock \href{http://www.ams.org/mathscinet-getitem?mr=MR3354619}{MR3354619}.

\bibitem[{Biskup and Louidor(2016)}]{MR3509015}
M.~Biskup and O.~Louidor.
\newblock Extreme {L}ocal {E}xtrema of {T}wo-{D}imensional {D}iscrete
  {G}aussian {F}ree {F}ield.
\newblock \emph{Comm. Math. Phys.} \textbf{345}~(1), 271--304 (2016).
\newblock \href{http://www.ams.org/mathscinet-getitem?mr=MR3509015}{MR3509015}.

\bibitem[{Bolthausen et~al.(2001)Bolthausen, Deuschel and Giacomin}]{MR1880237}
E.~Bolthausen, J.-D. Deuschel and G.~Giacomin.
\newblock Entropic repulsion and the maximum of the two-dimensional harmonic
  crystal.
\newblock \emph{Ann. Probab.} \textbf{29}~(4), 1670--1692 (2001).
\newblock \href{http://www.ams.org/mathscinet-getitem?mr=MR1880237}{MR1880237}.

\bibitem[{Bovier and Hartung(2014)}]{MR3164771}
A.~Bovier and L.~Hartung.
\newblock The extremal process of two-speed branching {B}rownian motion.
\newblock \emph{Electron. J. Probab.} \textbf{19}, no. 18, 28 (2014).
\newblock \href{http://www.ams.org/mathscinet-getitem?mr=MR3164771}{MR3164771}.

\bibitem[{Bovier and Hartung(2015)}]{MR3351476}
A.~Bovier and L.~Hartung.
\newblock Variable speed branching {B}rownian motion 1. {E}xtremal processes in
  the weak correlation regime.
\newblock \emph{ALEA Lat. Am. J. Probab. Math. Stat.} \textbf{12}~(1), 261--291
  (2015).
\newblock \href{http://www.ams.org/mathscinet-getitem?mr=MR3351476}{MR3351476}.

\bibitem[{Bovier and Kurkova(2004)}]{MR2070335}
A.~Bovier and I.~Kurkova.
\newblock Derrida's generalized random energy models. {II}. {M}odels with
  continuous hierarchies.
\newblock \emph{Ann. Inst. H. Poincar\'e Probab. Statist.} \textbf{40}~(4),
  481--495 (2004).
\newblock \href{http://www.ams.org/mathscinet-getitem?mr=MR2070335}{MR2070335}.

\bibitem[{Bramson et~al.(2016)Bramson, Ding and Zeitouni}]{MR3433630}
M.~Bramson, J.~Ding and O.~Zeitouni.
\newblock Convergence in law of the maximum of the two-dimensional discrete
  {G}aussian free field.
\newblock \emph{Comm. Pure Appl. Math.} \textbf{69}~(1), 62--123 (2016).
\newblock \href{http://www.ams.org/mathscinet-getitem?mr=MR3433630}{MR3433630}.

\bibitem[{Capocaccia et~al.(1987)Capocaccia, Cassandro and Picco}]{MR883541}
D.~Capocaccia, M.~Cassandro and P.~Picco.
\newblock On the existence of thermodynamics for the generalized random energy
  model.
\newblock \emph{J. Statist. Phys.} \textbf{46}~(3-4), 493--505 (1987).
\newblock \href{http://www.ams.org/mathscinet-getitem?mr=MR883541}{MR883541}.

\bibitem[{Daviaud(2006)}]{MR2243875}
O.~Daviaud.
\newblock Extremes of the discrete two-dimensional {G}aussian free field.
\newblock \emph{Ann. Probab.} \textbf{34}~(3), 962--986 (2006).
\newblock \href{http://www.ams.org/mathscinet-getitem?mr=MR2243875}{MR2243875}.

\bibitem[{Delfour(2012)}]{MR2954022}
M.~C. Delfour.
\newblock \emph{Introduction to optimization and semidifferential calculus},
  volume~12 of \emph{MOS-SIAM Series on Optimization}.
\newblock Society for Industrial and Applied Mathematics (SIAM), Philadelphia
  (2012).
\newblock ISBN 978-1-611972-14-6.
\newblock \href{http://www.ams.org/mathscinet-getitem?mr=MR2954022}{MR2954022}.

\bibitem[{Derrida(1985)}]{Derrida85}
B.~Derrida.
\newblock A generalization of the random energy model which includes
  correlations between energies.
\newblock \emph{J. Physique Lett.} \textbf{46}~(9), 401--407 (1985).

\bibitem[{Duplantier and Sheffield(2011)}]{MR2819163}
B.~Duplantier and S.~Sheffield.
\newblock Liouville quantum gravity and {KPZ}.
\newblock \emph{Invent. Math.} \textbf{185}~(2), 333--393 (2011).
\newblock \href{http://www.ams.org/mathscinet-getitem?mr=MR2819163}{MR2819163}.

\bibitem[{Dynkin(1980)}]{MR585179}
E.~B. Dynkin.
\newblock Markov processes and random fields.
\newblock \emph{Bull. Amer. Math. Soc. (N.S.)} \textbf{3}~(3), 975--999 (1980).
\newblock \href{http://www.ams.org/mathscinet-getitem?mr=MR585179}{MR585179}.

\bibitem[{Fang(2012)}]{MR3012090}
M.~Fang.
\newblock Tightness for maxima of generalized branching random walks.
\newblock \emph{J. Appl. Probab.} \textbf{49}~(3), 652--670 (2012).
\newblock \href{http://www.ams.org/mathscinet-getitem?mr=MR3012090}{MR3012090}.

\bibitem[{Fang and Zeitouni(2012{\natexlab{a}})}]{MR2968674}
M.~Fang and O.~Zeitouni.
\newblock Branching random walks in time inhomogeneous environments.
\newblock \emph{Electron. J. Probab.} \textbf{17}, no. 67, 18
  (2012{\natexlab{a}}).
\newblock \href{http://www.ams.org/mathscinet-getitem?mr=MR2968674}{MR2968674}.

\bibitem[{Fang and Zeitouni(2012{\natexlab{b}})}]{MR2981635}
M.~Fang and O.~Zeitouni.
\newblock Slowdown for time inhomogeneous branching {B}rownian motion.
\newblock \emph{J. Stat. Phys.} \textbf{149}~(1), 1--9 (2012{\natexlab{b}}).
\newblock \href{http://www.ams.org/mathscinet-getitem?mr=MR2981635}{MR2981635}.

\bibitem[{Hanson(1981)}]{MR614849}
M.~A. Hanson.
\newblock On sufficiency of the {K}uhn-{T}ucker conditions.
\newblock \emph{J. Math. Anal. Appl.} \textbf{80}~(2), 545--550 (1981).
\newblock \href{http://www.ams.org/mathscinet-getitem?mr=MR614849}{MR614849}.

\bibitem[{Hu et~al.(2010)Hu, Miller and Peres}]{MR2642894}
X.~Hu, J.~Miller and Y.~Peres.
\newblock Thick points of the {G}aussian free field.
\newblock \emph{Ann. Probab.} \textbf{38}~(2), 896--926 (2010).
\newblock \href{http://www.ams.org/mathscinet-getitem?mr=MR2642894}{MR2642894}.

\bibitem[{Kistler(2015)}]{MR3380419}
N.~Kistler.
\newblock Derrida's random energy models. {F}rom spin glasses to the extremes
  of correlated random fields.
\newblock In \emph{Correlated Random Systems: Five Different Methods}, volume
  2143 of \emph{Lecture Notes in Mathematics}. Springer (2015).
\newblock \href{http://www.ams.org/mathscinet-getitem?mr=MR3380419}{MR3380419}.

\bibitem[{Kistler and Schmidt(2015)}]{MR3358969}
N.~Kistler and M.~A. Schmidt.
\newblock From {D}errida's random energy model to branching random walks: from
  1 to 3.
\newblock \emph{Elec. Comm. Prob.} \textbf{20}, no. 47, 12 (2015).
\newblock \href{http://www.ams.org/mathscinet-getitem?mr=MR3358969}{MR3358969}.

\bibitem[{Lawler(1991)}]{MR1117680}
G.~F. Lawler.
\newblock \emph{Intersections of random walks}.
\newblock Probability and its Applications. Birkh\"auser Boston, Inc., Boston,
  MA (1991).
\newblock ISBN 0-8176-3557-2.
\newblock \href{http://www.ams.org/mathscinet-getitem?mr=MR1117680}{MR1117680}.

\bibitem[{Lawler and Limic(2010)}]{MR2677157}
G.~F. Lawler and V.~Limic.
\newblock \emph{Random walk: a modern introduction}, volume 123 of
  \emph{Cambridge Studies in Advanced Mathematics}.
\newblock Cambridge University Press, Cambridge (2010).
\newblock ISBN 978-0-521-51918-2.
\newblock \href{http://www.ams.org/mathscinet-getitem?mr=MR2677157}{MR2677157}.

\bibitem[{Maillard and Zeitouni(2016)}]{MR3531703}
P.~Maillard and O.~Zeitouni.
\newblock Slowdown in branching {B}rownian motion with inhomogeneous variance.
\newblock \emph{Ann. Inst. Henri Poincar\'e Probab. Stat.} \textbf{52}~(3),
  1144--1160 (2016).
\newblock \href{http://www.ams.org/mathscinet-getitem?mr=MR3531703}{MR3531703}.

\bibitem[{Mallein(2015)}]{MR3373310}
B.~Mallein.
\newblock \hspace{-0.4mm}{M}aximal \hspace{-0.4mm}displacement
  \hspace{-0.4mm}of \hspace{-0.4mm}a \hspace{-0.4mm}branching
  \hspace{-0.4mm}random \hspace{-0.4mm}walk \hspace{-0.4mm}in
  \hspace{-0.4mm}time-inhomogeneous environment.
\newblock \emph{Stoch. Proc. Appl.} \textbf{125}~(10), 3958--4019 (2015).
\newblock \href{http://www.ams.org/mathscinet-getitem?mr=MR3373310}{MR3373310}.

\bibitem[{Ouimet(2014)}]{Ouimet2014master}
F.~Ouimet.
\newblock \emph{\textit{{\'{E}}tude du maximum et des hauts points de la marche
  al\'eatoire branchante inhomog\`ene et du champ libre gaussien
  inhomog\`ene}}.
\newblock Master's thesis, Universit\'e de Montr\'eal (2014).
\newblock
  \urlprefix\url{https://papyrus.bib.umontreal.ca/xmlui/handle/1866/11510}.

\bibitem[{Ouimet(2015)}]{arXiv:1509.08172}
F.~Ouimet.
\newblock Maxima of branching random walks with piecewise constant variance.
\newblock \emph{ArXiv Mathematics e-prints}  (2015).
\newblock \href{http://arxiv.org/abs/1509.08172}{arXiv:1509.08172}.

\bibitem[{Paley and Zygmund(1932)}]{PaleyZygmund1932}
R.E.A.C. Paley and A.~Zygmund.
\newblock A note on analytic functions in the unit circle.
\newblock \emph{Math. Proc. Cambridge Philos. Soc.} \textbf{28}~(3), 266--272
  (1932).

\bibitem[{Sheffield(2007)}]{MR2322706}
S.~Sheffield.
\newblock Gaussian free fields for mathematicians.
\newblock \emph{Probab. Theory Related Fields} \textbf{139}~(3-4), 521--541
  (2007).
\newblock \href{http://www.ams.org/mathscinet-getitem?mr=MR2322706}{MR2322706}.

\bibitem[{Zeitouni(2014)}]{Zeitouni2014ln}
O.~Zeitouni.
\newblock Gaussian fields.
\newblock \emph{Notes for Lectures}  (2014).
\newblock \urlprefix\url{http://www.cims.nyu.edu/~zeitouni/notesGauss.pdf}.

\end{thebibliography}

\end{document}